%

\input ./style/arxiv-general.cfg
\documentclass[aap,MSNbibl,dvips]{arximspdf}
\makeatletter
   \@ifpackageloaded{graphicx}{}{\usepackage{graphicx}}
\makeatother

%

\doi{10.1214/15-AAP1097}
\volume{26}
\issue{1}
\pubyear{2016}
\firstpage{462}
\lastpage{506}
\docsubty{FLA}

\makeatletter

\newcommand{\rrvert}{\vert}

\newcommand{\llvert}{\vert}
\newcommand{\eqref}[1]{(\ref{#1})}
\newtheorem{theorem}{Theorem}
\newtheorem{condition}[theorem]{Condition}
\newtheorem{corollary}[theorem]{Corollary}
\newproclaim{definition}[theorem]{Definition}
\newproclaim{example}[theorem]{Example}
\newtheorem{lemma}[theorem]{Lemma}
\newtheorem{proposition}[theorem]{Proposition}
\newproclaim{remark}[theorem]{Remark}
\makeatother

\begin{document}
\begin{frontmatter}

\title{High-frequency asymptotics for Lipschitz--Killing curvatures of
excursion sets on the sphere}
\runtitle{Lipschitz--Killing curvatures of excursion sets on the sphere}

\begin{aug}
\author[A]{\fnms{Domenico}~\snm{Marinucci}\corref{}\ead[label=e1]{marinucc@mat.uniroma2.it}\thanksref{T1}}
\and
\author[B]{\fnms{Sreekar}~\snm{Vadlamani}}
\runauthor{D. Marinucci and S. Vadlamani}
\thankstext{T1}{Supported by the ERC Grants n. 277742 \emph{Pascal},
``Probabilistic
and Statistical Techniques for Cosmological Applications.''}
\affiliation{University of Rome Tor Vergata and TIFR-CAM}
\address[A]{Department of Mathematics\\
University of Rome Tor Vergata\\
Rome\\
Italy\\
\printead{e1}}
\address[B]{Tata Institute of Fundamental Research\\
Centre For Applicable Mathematics\\
Sharada Nagar, Chikkabommsandra\\
Bangalore 560065\\
Karnataka, India}
\end{aug}

%
\received{\smonth{2} \syear{2014}}
%
\revised{\smonth{11} \syear{2014}}

%
\begin{abstract}
In this paper, we shall be concerned with geometric functionals and
excursion probabilities for some nonlinear transforms evaluated on Fourier
components of spherical random fields. In particular, we consider both
random spherical harmonics and their smoothed averages, which can be viewed
as random wavelet coefficients in the continuous case. For such fields, we
consider smoothed polynomial transforms; we focus on the geometry of their
excursion sets, and we study their asymptotic behaviour, in the
high-frequency sense. We focus on the analysis of Euler--Poincar\'{e}
characteristics, which can be exploited to derive extremely accurate
estimates for excursion probabilities. The present analysis is
motivated by
the investigation of asymmetries and anisotropies in cosmological data. The
statistics we focus on are also suitable to deal with spherical random
fields which can only be partially observed, the canonical example being
provided by the masking effect of the Milky Way on Cosmic Microwave
Background (CMB) radiation data.
\end{abstract}

%
\begin{keyword}[class=AMS]
\kwd{60G60}
\kwd{62M15}
\kwd{53C65}
\kwd{42C15}
\end{keyword}
\begin{keyword}
\kwd{High-frequency asymptotics}
\kwd{spherical random fields}
\kwd{Gaussian subordination}
\kwd{Lipschitz--Killing curvatures}
\kwd{Minkowski functionals}
\kwd{excursion sets}
\end{keyword}
\end{frontmatter}

\section{Introduction}

\subsection{Motivations and general framework}

In this paper, we shall be concerned with geometric functionals and
excursion probabilities for some nonlinear transforms evaluated on Fourier
components of spherical random fields. More precisely, let $ \{
T(x), x\in S^{2} \} $ denote a Gaussian, zero-mean\break isotropic
spherical random field, that is, for some probability space $(\Omega
,\Im,P)$
the application $T(x,\omega)\rightarrow\mathbb{R}$ is $ \{
\mathcal{B}%
(S^{2})\times\Im \} $ measurable, where $\mathcal{B}(S^{2})$ denotes
the Borel $\sigma$-algebra on the sphere, and by isotropy we mean as usual
that for all rotation $g\in \operatorname{SO}(3)$, the field $ \{ T(x) \} $ has
the same law as $ \{ T^{g}(x):=T(gx) \} $. It is well known that
the following representation holds in the mean square sense (see, e.g., \cite{leonenko2,marpecbook,mal}):
%
\begin{equation}
T(x)=\sum_{\mathbb{\ell}m}a_{\mathbb{\ell}m}Y_{\mathbb{\ell
}m}(x)=
\sum_{%
\mathbb{\ell}}T_{\mathbb{\ell}}(x),\qquad T_{\mathbb{\ell}}(x)=\sum
_{m=-%
\mathbb{\ell}}^{\mathbb{\ell}}a_{\mathbb{\ell}m}Y_{\mathbb{\ell
}m}(x),
\label{norcia}
\end{equation}
where $ \{ Y_{\mathbb{\ell}m}(\cdot) \} $ denotes the family of
spherical harmonics, and $ \{ a_{\mathbb{\ell}m} \} $ the
array of
random spherical harmonic coefficients, which satisfy $\mathbb
{E}a_{\mathbb{%
\ell}m}\overline{a}_{\mathbb{\ell}^{\prime}m^{\prime}}=C_{\mathbb
{\ell}%
}\delta_{\mathbb{\ell}}^{\mathbb{\ell}^{\prime}}\delta
_{m}^{m^{\prime
}}$; here, $\delta_{a}^{b}$ is the Kronecker delta function, and the
sequence $ \{ C_{\mathbb{\ell}} \} $ represents the angular power
spectrum of the field. As pointed out in \cite{mp2012}, under isotropy the
sequence $C_{\mathbb{\ell}}$ necessarily satisfies $\sum_{\mathbb{\ell
}}%
\frac{(2\mathbb{\ell}+1)}{4\pi}C_{\mathbb{\ell}}=\mathbb
{E}T^{2}<\infty$
and the random field $T(x)$ is mean square continuous. Under the slightly
stronger assumption $\sum_{\mathbb{\ell}\geq L}(2\mathbb{\ell}+1)C_{%
\mathbb{\ell}}\leq O(\log^{-2}L)$, the field can be shown to be a.s.
continuous, an assumption that we shall exploit heavily below.

Our attention will be focused on the Fourier components $ \{
T_{\mathbb{%
\ell}}(x) \} $, which represent random eigenfunctions of the spherical
Laplacian:
\[
\Delta_{S^{2}}T_{\mathbb{\ell}}=-\mathbb{\ell}(\mathbb{
\ell}+1)T_{%
\mathbb{\ell}},\qquad \mathbb{\ell}=1,2,\ldots.
\]
A lot of recent work has been focused on the characterization of geometric
features for $ \{ T_{\mathbb{\ell}} \} $, under Gaussianity
assumptions; for instance, \cite{Wig1,Wig2} studied the asymptotic
behaviour of the nodal domains, proving an earlier conjecture by Berry on
the variance of {(functionals of)} the zero sets of $T_{\mathbb{\ell
}}$. In
an earlier contribution, \cite{BGS} had focused on the \textit{Defect} or
signed area, that is, the difference between the positive and negative regions;
a central limit theorem for these statistics and more general nonlinear
transforms of Fourier components was recently established by~\cite{MaWi3}.
These studies have been motivated, for instance, by the analysis of
so-called Quantum Chaos {(see again \cite{BGS})}, where the behaviour of
random eigenfunctions is taken as an approximation for the asymptotics in
deterministic case, under complex boundary conditions. More often, spherical
eigenfunctions emerge naturally from the analysis of the Fourier components
of spherical random fields, as in (\ref{norcia}). In the latter
circumstances, several functionals of $T_{\mathbb{\ell}}$ assume a great
practical importance: to mention a couple, the squared norm of
$T_{\mathbb{%
\ell}}$ provides an unbiased sample estimate for the angular power spectrum
$C_{\mathbb{\ell}}$,
\[
\mathbb{E} \biggl\{ \int_{S^{2}}T_{\mathbb{\ell}}^{2}(x)\,dx
\biggr\} =(2\mathbb{%
\ell}+1)C_{\mathbb{\ell}},
\]
while higher-order power lead to estimates of the so-called polyspectra,
which have a great importance in the analysis of non-Gaussianity (see, e.g.,
\cite{marpecbook}).

The previous discussion shows that the analysis of nonlinear
functionals of $%
 \{ T_{\mathbb{\ell}} \} $ may have a great importance for
statistical applications, especially in the framework of cosmological data
analysis. In this area, a number of papers have searched for deviations of
geometric functionals from the expected behaviour under Gaussianity. For
instance, the so-called Minkowski functionals have been widely used as tools
to probe non-Gaussianity of the field $T(x)$; see \cite{matsubara} and the
references therein. On the sphere, Minkowski functionals correspond to the
area, the boundary length and the Euler--Poincar\'{e} characteristic of
excursion sets, and up to constants they correspond to the Lipschitz--Killing
curvatures we shall consider in this paper; see \cite{RFG}, page 144. Many
other works have also focused on local deviations from the Gaussianity
assumption, mainly exploiting the properties of integrated higher order
moments (polyspectra); see \cite{pietrobon1,rudjord2}.

In general, the works aimed at the analysis of local phenomena are often
based upon wavelets-like constructions, rather than standard Fourier
analysis. The astrophysical literature on these issues is vast; see, for
instance, \cite{Wiaux,Starck} and the references therein.
Indeed, the
double localization properties of wavelets (in real and harmonic domain)
turn out usually to be extremely useful when handling real data.

In this paper, we shall focus on sequence of spherical random fields which
can be viewed as averaged forms of the spherical eigenfunctions, for example,
\[
\beta_{j}(x)=\sum_{\mathbb{\ell}}b \biggl(
\frac{\mathbb{\ell}}{B^{j}}%
 \biggr)T_{\mathbb{\ell}}(x),\qquad j=1,2,3\ldots
\]
for $b(\cdot)$ a weight function whose properties we shall discuss immediately.
The fields $ \{ \beta_{j}(x) \} $ can indeed be viewed as a
representation of the coefficients from a continuous wavelet transform
from $%
T(x)$, at scale $j$. More precisely, consider the kernel
\begin{eqnarray*}
\Psi_{j}\bigl( \langle x,y \rangle\bigr) &:=&\sum
_{\mathbb{\ell
}}b \biggl(%
\frac{\mathbb{\ell}}{B^{j}} \biggr)
\frac{2\mathbb{\ell}+1}{4\pi
}P_{\mathbb{%
\ell}}\bigl( \langle x,y \rangle\bigr)
\\
&=&\sum_{\mathbb{\ell}}b \biggl(\frac{\mathbb{\ell}}{B^{j}} \biggr)\sum
_{m=-%
\mathbb{\ell}}^{\mathbb{\ell}}Y_{\mathbb{\ell}m}(x)\overline
{Y}_{\mathbb{%
\ell}m}(y).
\end{eqnarray*}
Assuming that $b(\cdot)$ is smooth (e.g., $C^{\infty})$, compactly
supported in $%
[B^{-1},B]$, and satisfying the partition of unity property $\sum_{j}b^{2}(%
\frac{\mathbb{\ell}}{B^{j}})=1$, for all $\ell>B$, where $B$ is a fixed
``bandwidth'' parameter s.t. $B>1$. Then $\Psi
_{j}( \langle x,y \rangle)$ can be viewed as a continuous version
of the needlet transform, which was introduced by Narcowich et al. in
\cite%
{npw1}, and considered from the point of view of statistics and cosmological
data analysis by many subsequent authors, starting from \cite{bkmpAoS,mpbb08,pbm06}. In this framework, the following localization
property is now well known (see, e.g., \cite{npw1}, Theorem~3.5., \cite%
{gm1}, Lemma~4.1 or \cite{marpecbook}, Proposition~10.5): for all $M\in
\mathbb{N}$, there exists a constant $C_{M}$ (independent of $j$) such
that
%
\begin{equation}
\bigl\llvert \Psi_{j}\bigl( \langle x,y \rangle\bigr)\bigr\rrvert
\leq \frac{%
C_{M}B^{2j}}{ \{ 1+B^{j}d(x,y) \} ^{M}}, \label{localineq}
\end{equation}
where $d(x,y)=\arccos( \langle x,y \rangle)$ is the usual
geodesic distance on the sphere. Hence, the needlet field
\begin{eqnarray}\label{ptrf}
\beta_{j}(x) &=&\int_{S^{2}}\Psi_{j}
\bigl( \langle x,y \rangle\bigr)T(y)\,dy\nonumber
\\
&=&\int_{S^{2}}\sum_{\mathbb{\ell}m}b
\biggl(\frac{\mathbb{\ell}}{B^{j}} 
 \biggr)Y_{\mathbb{\ell}m}(x)
\overline{Y}_{\mathbb{\ell}m}(y)\sum_{\ell
^{\prime}m^{\prime}}a_{\ell^{\prime}m^{\prime}}Y_{\ell^{\prime
}m^{\prime}}(y)\,dy
\\
&=&\sum_{\mathbb{\ell}m}\sum_{\ell^{\prime}m^{\prime}}b
\biggl(\frac
{\mathbb{%
\ell}}{B^{j}} \biggr)a_{\ell^{\prime}m^{\prime}}Y_{\mathbb{\ell}%
m}(x)
\delta_{\ell}^{\ell^{\prime}}\delta_{m}^{m^{\prime}}=\sum
_{%
\mathbb{\ell}}b \biggl(\frac{\mathbb{\ell}}{B^{j}}
\biggr)T_{\mathbb
{\ell}%
}(x)\nonumber
\end{eqnarray}
is then only locally determined, that is, for $B^{j}$ large enough its value
depends only on the behaviour of $T(y)$ in a neighbourhood of $x$.
This is
a very important property, for instance, when dealing with spherical random
fields which can only be partially observed, the canonical example being
provided by the masking effect of the Milky Way on Cosmic Microwave
Background (CMB) radiation.

It is hence very natural to produce out of $ \{ \beta_{j}(x)
\} $
nonlinear statistics of great practical relevance. To provide a concrete
example, a widely disputed theme in CMB data analysis concerns the existence
of asymmetries in the angular power spectrum; it has been indeed often
suggested that the angular power $ \{ C_{\mathbb{\ell}} \} $ may
exhibit different behaviour for different subsets of the sky, at least over
some multipole range; see, for instance, \cite{hansen2009,pietrobon1}.
It is readily seen that
\[
\mathbb{E} \bigl\{ \beta_{j}^{2}(x) \bigr\} =\sum
_{\mathbb{\ell}}b \biggl(%
\frac{\mathbb{\ell}}{B^{j}} \biggr)
\frac{2\mathbb{\ell}+1}{4\pi
}C_{\mathbb{%
\ell}},
\]
which hence suggests a natural ``local''
estimator for a binned form of the angular power spectrum (note that the
right-hand side does not depend on $x$, as a consequence of isotropy). More
precisely, it is natural to consider some form of averaging and introduce
the process
%
\begin{equation}
\int_{S^{2}}K\bigl( \langle z,x \rangle\bigr)
\beta_{j}^{2}(x)\,dx, \qquad z\in S^{2}, \label{considef}
\end{equation}
where $K( \langle\cdot,\cdot \rangle)$ is some kernel function whose
properties we will discuss below; for instance, should we consider the
behaviour of the angular power spectrum on the northern and southern
hemisphere, we might focus on $z=N,S$, where $N,S$ denote, respectively, the
North and South Poles (compare \cite{hansen2009,pietrobon1,bennett2012,planckIS} and the references therein). In the rest of this
paper, we shall be concerned with centred and normalized versions of
(\ref%
{considef}), that is, processes of the form
%
\begin{equation}
g_{j;q}(z):=\int_{S^{2}}K\bigl( \langle z,x \rangle
\bigr)H_{q} \biggl(\frac{%
\beta_{j}(x)}{\sqrt{\mathbb{E}\beta_{j}^{2}(x)}} \biggr)\,dx, \label{mainproc}
\end{equation}
where $H_{q}(\cdot)$ is the Hermite polynomial of $q$th order; for instance,
for $q=3$ these processes could be exploited to investigate local variation
in Gaussian and non-Gaussian features (see \cite{rudjord2} and below for
more discussion and details).

\subsection{Main result}

The purpose of this paper is to study the asymptotic behaviour for the
expected value of the Euler characteristic and other geometric functionals
for the excursion regions of sequences of fields such as $ \{
g_{j;q}(\cdot) \} $, and to exploit these results to obtain excursion
probabilities in non-Gaussian circumstances. Indeed, on one hand these
geometric functionals are of interest by themselves, as they provide the
basis for implementing goodness-of-fit tests (compare~\cite
{matsubara}); on
the other hand, they provide the clue for approximations of the excursion
probabilities for $ \{ g_{j;q}(\cdot) \} $, by means of some weak
convergence results we shall establish, in combination with some now
classical arguments described in detail in the monograph \cite{RFG}.

It is important to stress that our results are obtained under a setting
which is quite different from usual. In particular, the asymptotic
theory is
investigated in the high frequency sense, for example, assuming that a single
realization of a spherical random field is observed at higher and higher
resolution as more and more refined experiments are implemented. This
is the
setting adopted in \cite{marpecbook}; see also \cite{anderes,steinm}
for the related framework of fixed-domain asymptotics.

Due to the nature of high-frequency asymptotics, we cannot expect the
finite-dimensional distributions of the processes we focus on to converge.
This will require a more general notion of weak convergence, as developed,
for instance, by \cite{davydov,dudley}. By means of this, we shall
indeed show how to evaluate asymptotically valid excursion probabilities,
which provide a natural solution for hypothesis testing problems. Indeed,
the main result of the paper, Theorem~\ref{thm:exc:prob}, provides a very
explicit bound for the excursion probabilities of non-Gaussian fields
such as
(\ref{mainproc}), for example,
%
\begin{eqnarray}\label{mainresult}
&&\limsup_{j\rightarrow\infty}\Bigl\llvert \Pr \Bigl\{ \sup
_{x\in
S^{2}}\tilde{%
g}_{j;q}(x)>u \Bigr\} - \bigl
\{ 2\bigl(1-\Phi(u)\bigr)+2u\phi(u)\lambda _{j;q} \bigr\} \Bigr\rrvert
\nonumber
\\[-8pt]
\\[-8pt]
\nonumber
&&\qquad\leq\exp \biggl( -\frac{\alpha
u^{2}}{2} \biggr),
\end{eqnarray}
where $\tilde{g}_{j;q}(x)$ has been normalized to have unit variance,
$\phi
(\cdot),\Phi(\cdot)$ denote standard Gaussian density and distribution
function, $%
\alpha>1$ is some constant and the parameters $\lambda_{j;q}$ have
analytic expressions in terms of generalized convolutions of angular power
spectra; see (\ref{porteaperte}), (\ref{porteaperte2}). See also \cite%
{NardiSiegYakir} for some related results on the distribution of maxima of
approximate Gaussian random fields; note, however, that our approach is
quite different from theirs and the tools we use allow us to get much
stronger results in terms of the uniform estimates.

\subsection{Plan of the paper}

The plan of the paper is as follows: In Section~\ref{sec:background}, we
review some background results on random fields and geometry, mainly
referring to the now classical monograph \cite{RFG}. Section~\ref%
{sec:spherical:field} specializes these results to spherical random fields,
for which some background theory is also provided, and provides some simple
evaluations for Lipschitz--Killing curvatures related to excursion sets for
harmonic components of such fields. More interesting Gaussian subordinated
fields are considered in Section~\ref{sec:gaussian:subordinate}, where some
detailed computations for covariances in general Gaussian subordinated
circumstances are also provided. Section~\ref{sec:weak:cgs} provides the
main convergence results, that is, shows how the distribution of these random
elements are asymptotically proximal (in the sense of \cite{davydov}) to
those of a Gaussian sequence with the same covariances. This result is then
exploited in Section~\ref{sec:application}, to provide the proof of %
\eqref{mainresult}. A number of possible applications on real cosmological
data sets are discussed throughout the paper.

\section{Background: Random fields and geometry}
\label{sec:background}

This section is devoted to recall basic integral geometric concepts, to
state the Gaussian kinematic fundamental formula, and to discuss its
application in evaluating the excursion probabilities. This theory has been
developed in a series of fundamental papers by R.~J. Adler, J.~E. Taylor and
coauthors (see \cite{adleraap,worsley,tayloradler2003,tayloradler2009,taylortakemuraadler,adleradvapp}), and it
is summarized in the monographs \cite{RFG,adlerstflour} which are
our main references in this Section (see also \cite{azaiswschebor,azaisbook} for a different approach,
and \cite{taylorvadlamani,chengxiao,adlersamo,adlerblanchet} for some further
developments in this area; applications to the sphere have also been
considered very recently by \cite{chengschwar,chengxiao2}).

\subsection{Lipschitz--Killing curvatures and Gaussian Minkowski functionals}

There are a number of ways to define Lipschitz--Killing curvatures, but
perhaps the easiest is via the so-called tube formula, which, in its
original form is due to Hotelling~\cite{Hotelling} and Weyl \cite
{Wey39}. To
state the tube formula, let $M$ be an $m$-dimensional smooth subset of $
\mathbb{R}^{n}$ such that $\partial M$ is a $C^{2}$ manifold endowed with
the canonical Riemannian structure on $\mathbb{R}^{n}$. The tube of
radius $%
\rho$ around $M$ is defined as
%
\begin{equation}
\operatorname{Tube}(M,\rho) = \bigl\{ x\in\mathbb{R}^{n}\dvtx d(x,M)\leq\rho
\bigr\} ,
\end{equation}
where
%
\begin{equation}
d(x,M)=\inf_{y\in M}\Vert x-y\Vert.
\end{equation}
Then according to Weyl's tube formula {(see \cite{RFG})}, the Lebesgue
volume of this constructed tube, for small enough $\rho$, is given by
%
\begin{equation}
\lambda_{n}\bigl(\operatorname{Tube}(M,\rho)\bigr)= \sum
_{j=0}^{m}\rho^{n-j}\omega
_{n-j}%
\mathcal{L}_{j}(M), \label{tube:formula}
\end{equation}
where $\omega_{j}$ is the volume of the $j$-dimensional unit ball and $
\mathcal{L}_{j}(M)$ is the $j$th-Lipschitz--Killing curvature (LKC)
of $M$%
. A little more analysis shows that $\mathcal{L}_{m}(M)=\mathcal{H}_{m}(M)$,
the $m$-dimensional Hausdorff measure of $M$, and that $\mathcal{L}_{0}(M)$
is the Euler--Poincar\'{e} characteristic of $M$. Although the
remaining LKCs
have less transparent interpretations, it is easy to see that they satisfy
simple scaling relationships, in that $\mathcal{L}_{j}(\alpha M)=\alpha
^{j}%
\mathcal{L}_{j}(M)$ for all $1\leq j\leq m$, where $\alpha M=\{x\in
\mathbb{R%
}^{n}\dvtx x=\alpha y\mbox{ for some }y\in M\}$. Furthermore, despite the fact that
defining the $\mathcal{L}_{j}$ via (\ref{tube:formula}) involves the
embedding of $M$ in $\mathbb{R}^{n}$, the $\mathcal{L}_{j}(M)$ are actually
intrinsic, and so are independent of the ambient space.

Apart from their appearance in the tube formula (\ref{tube:formula}), there
are a number of other ways in which to define the LKCs. One such
(nonintrinsic) way which signifies the dependence of the LKCs on the
Riemannian metric is through the shape operator. Let $M$ be an
$m$-dimensional $C^{2}$ manifold embedded in $\mathbb{R}^{n}$; then
%
\begin{eqnarray}\label{LK:equation}\quad
&&\mathcal{L}_{k}(M)
\nonumber
\\[-8pt]
\\[-8pt]
\nonumber
&&\qquad=K_{n,m,k}\int_{M}\int
_{S(N_{x}M)}\operatorname{Tr}\bigl(S_{\nu
}^{(m-k)}
\bigr)1_{N_{x}M}(-\nu) \mathcal{H}_{n-m-1}(d\nu)
\mathcal{H}_{m-1}(dx),
\end{eqnarray}
where, $K_{n,m,k}= \frac{1}{(m-k)!}\frac{\Gamma ( {(n-k)}/{2} )%
}{(2\pi)^{(n-k)/2}}$, and $S(N_{x}M)$ denotes a sphere in the normal
space $%
N_{x}M$ of $M$ at the point $x\in M$.

Closely related to the LKCs are set functionals called the Gaussian\break 
Minkowski functionals (GMFs), which are defined via a Gaussian tube formula.
Consider the Gaussian measure, $\gamma_{n}(dx)=(2\pi)^{-n/2}e^{-\Vert
x\Vert^{2}/2}\,dx$, instead of the standard Lebesgue measure in (\ref%
{tube:formula}); the Gaussian tube formula is then given by
%
\begin{equation}
\gamma_{n} \bigl( (M,\rho) \bigr) =\sum_{k\geq0}
\frac{\rho^{k}}{k!}%
\mathcal{M}_{k}^{\gamma_{n}}(M),
\label{Gaussian:tube:formula}
\end{equation}
where the coefficients $\mathcal{M}_{k}^{\gamma_{n}}(M)$'s are the GMFs
(for technical details, we refer the reader to \cite{RFG}). We note that
these set functionals, like their counterparts in (\ref{tube:formula}) can
be expressed as integrals over the manifold and its normal space (cf.
\cite%
{RFG}).

\subsection{Excursion probabilities and the Gaussian kinematic fundamental
formula}

A classical problem in stochastic processes is to compute the excursion
probability or the suprema probability
\[
P \Bigl( \sup_{x\in M}f(x)\geq u \Bigr) ,
\]
where, as before, $f$ is a random field defined on the parameter space $M$.
In the case when $f$ happens to be a centered Gaussian field with constant
variance $\sigma^{2}$ defined on $M$, a piecewise smooth manifold,
then by
the arguments set forth in Chapter~14 of \cite{RFG}, we have that
%
\begin{equation}
\Bigl|P \Bigl\{ \sup_{x\in M}f(x)\leq u \Bigr\} -\mathbb{E} \bigl\{
\mathcal{L}%
_{0}\bigl(A_{u}(f;M)\bigr) \bigr\}
\Bigr|<O \biggl( \exp \biggl(-\frac{\alpha u^{2}}{%
2\sigma^{2}} \biggr) \biggr) , \label{eqn:sup:EC}
\end{equation}
where $\mathcal{L}_{0}(A_{u}(f;M))$ is, as defined earlier, the
Euler--Poincar%
\'{e} characteristic of the excursion set $A_{u}(f;M)=\{x\in M\dvtx f(x)\geq
u\}$%
, and $\alpha>1$ is a constant, which depends on the field $f$ and can be
determined (see Theorem~14.3.3 of \cite{RFG}).

At first sight, from (\ref{eqn:sup:EC}) it may appear that we may have to
deal with a hard task, for example, that of evaluating $\mathbb{E}
\{ \mathcal{L}%
_{0}(A_{u}(f;M)) \} $. This task, however, is greatly simplified
due to
the \textit{Gaussian kinematic fundamental formula} (Gaussian-KFF) (see
Theorems 15.9.4--15.9.5 in \cite{RFG}), which states that, for a smooth
$%
M\subset\mathbb{R}^{N}$
\begin{eqnarray*}
&&\mathbb{E}\bigl(\mathcal{L}_{i}^{f}\bigl(A_{u}(f,M)
\bigr)\bigr)
\\
&&\qquad=\sum_{\mathbb{\ell}=0}^{\dim(M)-i}\pmatrix{ i+\mathbb{\ell}
\cr
\mathbb{\ell}}\frac{\Gamma ( {i}/{2}+1 ) \Gamma ( {\mathbb{%
\ell}}/{2}+1 ) }{\Gamma ( {(i+\mathbb{\ell})}/{2}+1 )
}%
(2
\pi)^{-\mathbb{\ell}/2}\mathcal{L}_{i+\mathbb{\ell}}^{f}(M)\mathcal
{M}_{%
\mathbb{\ell}}^{\gamma}\bigl([u,\infty)\bigr),
\end{eqnarray*}
for example, in the special case of the Euler characteristic ($i=0$)
%
\begin{equation}
\mathbb{E} \bigl\{ \mathcal{L}_{0}^{f}
\bigl(A_{u}(f;M)\bigr) \bigr\} =\sum_{j=0}^{%
\operatorname{dim}(M)}(2
\pi)^{-j/2}\mathcal{L}_{j}^{f}(M)\mathcal
{M}_{j}^{\gamma
} \bigl( [u,\infty) \bigr) , \label{eqn:GKF}
\end{equation}
where $\mathcal{L}_{j}^{f}(M)$ is the $j$th LKC of $M$ with respect to the
induced metric $g^{f}$ given by
\[
g_{x}^{f}(Y_{x},Z_{x})=\mathbb{E}
\bigl\{ Yf(x)\cdot Zf(x) \bigr\} ,
\]
for $X_{x},Y_{x}\in T_{x}M$, the tangent space at $x\in M$. The Gaussian
kinematic fundamental formula will play a crucial role in all the
developments to follow in the subsequent sections.

\section{Spherical Gaussian fields}
\label{sec:spherical:field}

In this section, we shall start from some simple results on the
evaluation of
the expected values of Lipschitz--Killing curvatures for sequences of
spherical Gaussian processes. These results will be rather straightforward
applications of the Gaussian kinematic fundamental formula (\ref{eqn:GKF}),
and are collected here for completeness and as a bridge toward the more
complicated case of {nonlocal transforms of} Gaussian subordinated
processes, to be considered later.

Note first that for {a unit variance} Gaussian field on the sphere $%
f\dvtx S^{2}\rightarrow\mathbb{R}$, the expected value of the
Euler--Poincar%
\'{e} characteristic of the excursion set $A_{u}(f;S^{2})=\{x\in
S^{2}\dvtx f(x)\geq u\}$ is given by
\begin{eqnarray*}
&&\mathbb{E} \bigl\{ \mathcal{L}_{0}\bigl(A_{u}
\bigl(f,S^{2}\bigr)\bigr) \bigr\}
\\
&&\qquad=\mathcal{L}_{0}^{f}\bigl(S^{2}\bigr)
\mathcal{M}_{0}^{\gamma}\bigl([u,\infty)\bigr)+(2\pi
)^{-1/2}\mathcal{L}_{1}^{f}\bigl(S^{2}
\bigr)\mathcal{M}_{1}^{\gamma}\bigl([u,\infty )\bigr)\\
&&\qquad\quad{}+(2
\pi)^{-1}\mathcal{L}_{2}^{f}\bigl(S^{2}
\bigr)\mathcal{M}_{2}^{\gamma
}\bigl([u,\infty )\bigr),
\end{eqnarray*}
for
\[
\mathcal{M}_{0}^{\gamma}\bigl([u,\infty)\bigr)=\int
_{u}^{\infty}\phi(x)\,dx,\qquad \mathcal{M}_{j}^{\gamma}
\bigl([u,\infty)\bigr)=H_{j-1}(u)\phi(u),
\]
where $\phi(\cdot)$ denotes the density of a real valued standard normal
random variable, and $H_{j}(u)$ denotes the Hermite polynomials,
\[
H_{j}(u)=(-1)^{j} \bigl( \phi(u) \bigr) ^{-1}
\frac{d^{j}}{du^{j}}\phi (u)\quad %
\mbox{and}\quad H_{-1}(u)=1-\Phi(u),
\]
{while} $\mathcal{L}_{k}^{f}(S^{2})$ are the usual Lipschitz--Killing
curvatures, under the induced Gaussian metric, that is,
\[
\mathcal{L}_{k}^{f}\bigl(S^{2}\bigr):={
\frac{(-2\pi)^{-(2-k)/2}}{2}}%
\int_{S^{2}}\operatorname{Tr}
\bigl(R^{(N-k)/2}\bigr)\operatorname{Vol}_{g^{f}};
\]
here, $R$ is the Riemannian curvature tensor and $\operatorname{Vol}_{g^{f}}$ is the volume
form, under the induced Gaussian metric, given by
\[
g^{f}(X,Y):=\mathbb{E} \{ Xf\cdot Yf \} =XY\mathbb {E}
\bigl(f^{2}\bigr).
\]

We recall that $\mathcal{L}_{0}(M)$ is a topological invariant and does not
depend on the metric; in particular, $\mathcal{L}_{0}(S^{2})\equiv2$.
Moreover, because the sphere is an (even-) $2$-dimensional manifold, $%
\mathcal{L}_{1}^{f}(S^{2})$ is identically zero.

As mentioned before, we start from some very simple result on the Fourier
components and wavelets transforms of Gaussian fields, for example, the expected
value of the Euler--Poincar\'{e} characteristic for two forms of harmonic
components, namely
\[
T_{\ell}(x)=\sum_{m=-\ell}^{\ell}a_{\ell m}Y_{\ell m}(x)\quad
\mbox{and}\quad \beta _{j}(x)=\sum_{\mathbb{\ell}}b\biggl(
\frac{\ell}{B^{j}}\biggr)T_{\ell}(x),
\]
the first representing a Fourier component at the multipole $\ell$, the
second a field of continuous needlet/wavelet coefficients at scale $j$. We
normalize these processes to unit variance by taking
\begin{eqnarray*}
\widetilde{T}_{\ell}(x)&=&\frac{T_{\ell}(x)}{\sqrt{({(2\ell+1)}/{(4\pi)})%
C_{\ell}}}\quad\mbox{and}\\
\widetilde{
\beta}_{j}(x)&=&\frac{\beta_{j}(x)}{
\sqrt{\sum_{\mathbb{\ell}}b^{2}({\ell}/{B^{j}})({(2\ell
+1)}/{(4\pi)})%
C_{\ell}}}.
\end{eqnarray*}
We start reporting some simple results on Lipschitz--Killing curvatures {of
excursion sets generated by} spherical Gaussian fields (see \cite{matsubara}
and the references therein for related expressions on $\mathbb{R}^{2}$ from
an astrophysical point of view). These results are straightforward
consequences of equation (\ref{eqn:GKF}).

\begin{lemma}
We have
\[
\mathcal{L}_{2}^{\widetilde{\beta}_{j}}\bigl(S^{2}\bigr) =4\pi
\frac{\sum_{\ell
}b^{2}({\ell}/{B^{j}})(2\ell+1)C_{\ell}({\ell(\ell+1)}/{2})}{
\sum_{\ell}b^{2}({\ell}/{B^{j}})(2\ell+1)C_{\ell}}.
\]
\end{lemma}

\begin{pf}
Recall first that, in standard spherical coordinates,
\[
P_{\mathbb{\ell}}\bigl( \langle x,y \rangle\bigr)=P_{\mathbb{\ell
}}\bigl(\sin
\vartheta_{x}\sin\vartheta_{y}\cos(\phi_{x}-
\phi_{y})+\cos \vartheta _{x}\cos\vartheta_{y}
\bigr).
\]
Some simple algebra then yields
\[
\frac{\partial^{2}}{\partial\vartheta_{x}\,\partial\vartheta_{y}}%
P_{\mathbb{\ell}}\bigl( \langle x,y \rangle\bigr)
\bigg\vert_{x=y}= \frac{\partial^{2}}{\sin\vartheta_{x}\sin\vartheta_{y}\partial\phi
_{x}\,\partial\phi_{y}}P_{\mathbb{\ell}}\bigl( \langle x,y \rangle
\bigr)\bigg\vert_{x=y}=P_{\mathbb{\ell}}^{\prime}(1)
\]
and
\[
\frac{\partial^{2}}{\sin\vartheta_{x}\partial\vartheta
_{y}\,\partial\phi_{x}}P_{\mathbb{\ell}}\bigl( \langle x,y \rangle \bigr)
\bigg\vert_{x=y}=0.
\]
The geometric meaning of the latter result is that the process is still
isotropic {under the new transformation,} whence the derivatives along the
two directions are still independent. As a consequence, writing $\mathbb
{E}%
 \{ \widetilde{\beta}_{j}(x)\widetilde{\beta}_{j}(y) \}
=:\Gamma
_{j}(x,y)$ we have
\begin{eqnarray*}
\frac{\partial^{2}\Gamma_{j}(x,y)}{\partial\vartheta_{x}\,\partial
\vartheta_{y}}\bigg\vert_{x=y}&= &\frac{\partial^{2}\Gamma_{j}(x,y)%
}{\sin\vartheta_{x}\sin\vartheta_{y}\partial\phi_{x}\,\partial\phi
_{y}}%
\bigg\vert_{x=y}\\
&=&\frac{\sum_{\mathbb{\ell}}b^{2}({\ell}/{B^{j}})C_{%
\mathbb{\ell}}({(2\mathbb{\ell}+1)}/{(4\pi)})P_{\mathbb{\ell
}}^{\prime}(1)%
}{\sum_{\mathbb{\ell}}b^{2}({\ell}/{B^{j}})({(2\mathbb{\ell
}+1)}/{%
(4\pi)})C_{\mathbb{\ell}}}
\end{eqnarray*}
and
\[
\frac{\partial^{2}\Gamma_{j}(x,y)}{\sin\vartheta_{x}\partial
\vartheta_{y}\,\partial\phi_{x}}\bigg\vert_{x=y}=0.
\]
We thus have that
{\fontsize{10.7}{12.7}{\selectfont
\begin{eqnarray*}
&&\hspace*{-4pt}\mathcal{L}_{2}^{\widetilde{\beta}_{j}}\bigl(S^{2}\bigr)\\
&&\hspace*{-6pt}\qquad=\int
_{S^{2}}\left\{ \det%
\left[ \matrix{\displaystyle \frac{\partial^{2}\Gamma_{j}(x,y)}{\partial\vartheta_{x}\,\partial
\vartheta_{y}}\bigg\vert_{x=y} & \displaystyle\frac{\partial^{2}\Gamma
_{j}(x,y)}{\sin\vartheta_{x}\partial\phi_{x}\,\partial\vartheta_{y}}
\bigg\vert_{x=y}
\vspace*{2pt}\cr
\displaystyle\frac{\partial^{2}\Gamma_{j}(x,y)}{\sin\vartheta_{y}\partial\phi
_{y}\,\partial\vartheta_{x}}\bigg\vert_{x=y} & \displaystyle\frac{\partial
^{2}\Gamma_{j}(x,y)}{\sin\vartheta_{x}\sin\vartheta_{y}\partial
\phi
_{x}\,\partial\phi_{y}}
\bigg\vert_{x=y}}
 \right] \right\} ^{1/2}\sin\vartheta \,d\vartheta \,d\phi
\\
&&\hspace*{-6pt}\qquad=4\pi\frac{\sum_{\mathbb{\ell}}b^{2}({\ell}/{B^{j}})
({(2\mathbb{%
\ell}+1)}/{(4\pi)})C_{\mathbb{\ell}}P_{\mathbb{\ell}}^{\prime}(1)}{\sum_{%
\mathbb{\ell}}b^{2}({\ell}/{B^{j}})({(2\mathbb{\ell
}+1)}/{(4\pi)})C_{%
\mathbb{\ell}}}.
\end{eqnarray*}}}
\hspace*{-3pt}Now recall that $P_{\mathbb{\ell}}^{\prime}(1)=\frac{\mathbb{\ell}(%
\mathbb{\ell}+1)}{2}$, whence the claim is established.
\end{pf}

\begin{remark}
Note that since the random field $\beta_{j}$ is an isotropic Gaussian
random field, the Lipschitz--Killing curvatures of $S^{2}$ under the metric
induced by the field $\beta_{j}$ are given by
\[
\mathcal{L}_{i}^{\widetilde{\beta}_{j}}\bigl(S^{2}\bigr)=\lambda
_{j}^{i/2}\mathcal{L}%
_{i}
\bigl(S^{2}\bigr),
\]
where $\mathcal{L}_{i}(S^{2})$ is the $i$th LKC under the usual Euclidean
metric, and $\lambda_{j}$ is the second spectral moment of $\widetilde{
\beta}_{j}$ (cf. \cite{RFG}). This result is true for all isotropic and
unit variance Gaussian random fields.
\end{remark}

The second auxiliary result that we shall need follows immediately from
Theorem~13.2.1 in \cite{RFG}, specialized to isotropic spherical random
fields with unit variance. Analogous expressions have been given (among many
other results) in the two recent papers \cite{chengschwar,chengxiao2}. The computations are straightforward and we report them only for
completeness.

\begin{lemma}
For the Gaussian isotropic field $\widetilde{\beta
}_{j}\dvtx S^{2}\rightarrow
\mathbb{R}$, such that $\mathbb{E}\widetilde{\beta}_{j}=0$, $\mathbb
{E}%
\widetilde{\beta}_{j}^{2}=1$, $\widetilde{\beta}_{j}\in C^{2}(S^{2})$
almost surely, we have that
%
\begin{eqnarray}\label{feb1}\quad
&&\mathbb{E} \bigl\{ \mathcal{L}_{0}\bigl(A_{u}\bigl(
\widetilde{\beta}%
_{j}(x),S^{2}\bigr)\bigr) \bigr\}
\nonumber
\\[-8pt]
\\[-8pt]
\nonumber
&&\qquad=2 \bigl\{ 1-\Phi(u) \bigr\} +4\pi \biggl\{ \frac
{%
\sum_{\mathbb{\ell}}b^{2}({\ell}/{B^{j}})C_{\mathbb{\ell}}({(2
\mathbb{\ell}+1)}/{(4\pi)})P_{\mathbb{\ell}}^{\prime}(1)}{\sum_{\mathbb
{\ell}%
}b^{2}({\ell}/{B^{j}})C_{\mathbb{\ell}}({(2\mathbb{\ell}+1)}/{(4\pi)})}%
 \biggr
\} \frac{ue^{-u^{2}/2}}{\sqrt{(2\pi)^{3}}},
\\
\label{feb2}&&\mathbb{E} \bigl\{ \mathcal{L}_{1}\bigl(A_{u}\bigl(
\widetilde{\beta}%
_{j}(x),S^{2}\bigr)\bigr) \bigr\}
\nonumber
\\[-8pt]
\\[-8pt]
\nonumber
&&\qquad=\pi \biggl\{ \frac{\sum_{\mathbb{\ell
}}b^{2}({%
\ell}/{B^{j}})C_{\mathbb{\ell}}({(2\mathbb{\ell}+1)}/{(4\pi)})P_{\mathbb{%
\ell}}^{\prime}(1)}{\sum_{\mathbb{\ell}}b^{2}({\ell}/{B^{j}})C_{
\mathbb{\ell}}({(2\mathbb{\ell}+1)}/{(4\pi)})} \biggr\} ^{1/2}e^{-u^{2}/2},
\end{eqnarray}
and finally
%
\begin{equation}
\mathbb{E} \bigl\{ \mathcal{L}_{2}\bigl(A_{u}\bigl(
\widetilde{\beta}%
_{j}(x),S^{2}\bigr)\bigr) \bigr\}
= \bigl\{ 1-\Phi(u) \bigr\} 4\pi. \label{feb3}
\end{equation}
\end{lemma}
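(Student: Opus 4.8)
The plan is to read off all three identities as a single specialization of the Gaussian kinematic fundamental formula in its form for Lipschitz--Killing curvatures of every order (Theorem 13.2.1 of \cite{RFG}, equivalently the general Gaussian-KFF displayed in Section 2 just before (\ref{eqn:GKF})), applied to the isotropic, unit-variance, $C^2$ field $\widetilde{\beta}_j$ on $S^2$. That formula expresses $\mathbb{E}\{\mathcal{L}_i(A_u(\widetilde{\beta}_j,S^2))\}$ as a finite sum over $\ell=0,\dots,2-i$ of the binomial/Gamma coefficients of (\ref{eqn:GKF}), each multiplied by $(2\pi)^{-\ell/2}$, by an LKC $\mathcal{L}_{i+\ell}(S^2)$, and by a Gaussian Minkowski functional $\mathcal{M}_\ell^\gamma([u,\infty))$. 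The whole argument therefore reduces to collecting three ingredients and simplifying.

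First I would record the inputs. (i) The intrinsic round-metric curvatures of the sphere are $\mathcal{L}_0(S^2)=2$, $\mathcal{L}_1(S^2)=0$ (odd index on a boundaryless even-dimensional manifold, as already noted above), and $\mathcal{L}_2(S^2)=4\pi$. (ii) Because $\widetilde{\beta}_j$ is isotropic of unit variance, the Remark gives $\mathcal{L}_k^{\widetilde{\beta}_j}(S^2)=\lambda_j^{k/2}\mathcal{L}_k(S^2)$, where $\lambda_j$ is the second spectral moment; by the previous Lemma $\lambda_j=\mathcal{L}_2^{\widetilde{\beta}_j}(S^2)/(4\pi)=\big(\sum_\ell b^2(\tfrac{\ell}{B^j})C_\ell\tfrac{2\ell+1}{4\pi}P_\ell'(1)\big)\big/\big(\sum_\ell b^2(\tfrac{\ell}{B^j})C_\ell\tfrac{2\ell+1}{4\pi}\big)$, which is exactly the ratio entering (\ref{feb1})--(\ref{feb2}). (iii) The Gaussian Minkowski functionals are $\mathcal{M}_0^\gamma([u,\infty))=1-\Phi(u)$, $\mathcal{M}_1^\gamma([u,\infty))=H_0(u)\phi(u)=\phi(u)$ and $\mathcal{M}_2^\gamma([u,\infty))=H_1(u)\phi(u)=u\phi(u)$, from the Hermite expressions recalled above.

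With these in hand I would treat the three orders in turn. For $i=0$ the Euler characteristic is a topological invariant, so no metric conversion is needed and (\ref{eqn:GKF}) applies verbatim: the $\ell=1$ term drops since $\mathcal{L}_1(S^2)=0$, the $\ell=0$ term gives $2(1-\Phi(u))$, and the $\ell=2$ term gives $(2\pi)^{-1}4\pi\lambda_j u\phi(u)=4\pi\lambda_j\,ue^{-u^2/2}/\sqrt{(2\pi)^3}$ (using $\phi(u)=(2\pi)^{-1/2}e^{-u^2/2}$), which is precisely the second summand of (\ref{feb1}). For $i=1,2$ the left-hand sides are genuine intrinsic volumes that depend on the metric, and here lies the one real subtlety: the Gaussian-KFF naturally delivers the induced-metric quantity $\mathcal{L}_i^{\widetilde{\beta}_j}(A_u)$, which I must convert to the round-metric $\mathcal{L}_i(A_u)$ by dividing through by $\lambda_j^{i/2}$ (the same scaling invoked in the Remark). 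Evaluating the coefficient $\binom{2}{1}\Gamma(\tfrac32)^2/\Gamma(2)=\pi/2$ and combining it with $\mathcal{L}_2(S^2)=4\pi$, the factor $(2\pi)^{-1/2}$, the net power $\lambda_j^{1/2}$ surviving the conversion, and $\mathcal{M}_1^\gamma$ gives $\pi\lambda_j^{1/2}e^{-u^2/2}$, i.e. (\ref{feb2}); for $i=2$ only the $\ell=0$ term survives, its $\lambda_j$ cancels against the $\lambda_j^{-1}$ of the conversion, and one is left with the expected area $4\pi(1-\Phi(u))$, i.e. (\ref{feb3}). The main obstacle is purely organizational---keeping the induced-metric and round-metric normalizations straight and correctly reducing the Gamma coefficients---rather than analytic; once the Remark's scaling is in hand, each line is a short computation.
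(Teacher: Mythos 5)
Your proposal is correct and follows essentially the same route as the paper: both rest on the Gaussian kinematic fundamental formula applied to the unit-variance isotropic field, the identification of the second spectral moment $\lambda_j$ as the ratio involving $P_{\ell}'(1)$, the values $\mathcal{L}_0(S^2)=2$, $\mathcal{L}_1(S^2)=0$, $\mathcal{L}_2(S^2)=4\pi$, and the Gaussian Minkowski functionals $\mathcal{M}_\ell^\gamma([u,\infty))=H_{\ell-1}(u)\phi(u)$. The only (cosmetic) difference is that the paper quotes Theorem 13.2.1 of \cite{RFG}, where the flag coefficients and the $\lambda^{\ell/2}$ factors already encode the passage from induced-metric to Euclidean Lipschitz--Killing curvatures, whereas you start from the induced-metric form of the formula and carry out that rescaling of $\mathcal{L}_i(A_u)$ by $\lambda_j^{-i/2}$ explicitly; the resulting coefficients $\binom{i+\ell}{\ell}\Gamma(\tfrac{i}{2}+1)\Gamma(\tfrac{\ell}{2}+1)/\Gamma(\tfrac{i+\ell}{2}+1)$ coincide with the paper's $\bigl[\begin{smallmatrix}i+\ell\\ \ell\end{smallmatrix}\bigr]$, so the two computations agree term by term.
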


\begin{pf}
We start by recalling that, from Theorem~13.2.1 in \cite{RFG},
\[
\mathbb{E} \bigl\{ \mathcal{L}_{i}\bigl(A_{u}\bigl(
\widetilde{\beta}%
_{j}(x),S^{2}\bigr)\bigr) \bigr\}
=\sum_{\mathbb{\ell}=0}^{\dim(S^{2})-i}\left[ \matrix{i+\mathbb{\ell}
\vspace*{2pt}\cr
\mathbb{\ell}}
 \right] \lambda^{\mathbb{\ell}/2}
\rho_{\mathbb{\ell}}(u)\mathcal {L}_{i+%
\mathbb{\ell}}\bigl(S^{2}\bigr),
\]
where
\begin{eqnarray*}
\left[ \matrix{ i+\mathbb{\ell}
\vspace*{2pt}\cr
\mathbb{\ell}}
 \right]&:=&\pmatrix{ i+\mathbb{\ell}
\vspace*{2pt}\cr
\mathbb{\ell}%
} \frac{\omega_{i+\mathbb{\ell}}}{\omega_{i}\omega_{\mathbb
{\ell}}%
},\qquad
\omega_{i}=\frac{\pi^{i/2}}{\Gamma({i}/{2}+1)},
\\
\rho_{\mathbb{\ell}}(u)&=&(2\pi)^{-\mathbb{\ell}/2}\mathcal {M}_{\mathbb{%
\ell}}^{\gamma}
\bigl([u,\infty)\bigr)=(2\pi)^{-(\mathbb{\ell}+1)/2}H_{\mathbb
{%
\ell}-1}(u)e^{-u^{2}/2},
\end{eqnarray*}
so that
\begin{eqnarray*}
\rho_{0}(u)&=&(2\pi)^{-1/2}\sqrt{2\pi}\bigl(1-\Phi (u)
\bigr)e^{u^{2}/2}e^{-u^{2}/2}=\bigl(1-\Phi(u)\bigr),
\\
\rho_{1}(u)&=&\frac{1}{2\pi}e^{-u^{2}/2},\qquad\rho
_{2}(u)=\frac{%
1}{\sqrt{(2\pi)^{3}}}ue^{-u^{2}/2}.
\end{eqnarray*}
Here,
\begin{eqnarray*}
\lambda&=&\mathbb{E}\beta_{j;\vartheta}^{2}=\mathbb{E}
\beta_{j;\phi
}^{2}%
,\qquad\beta_{j;\vartheta}=
\frac{\partial}{\partial\vartheta
}\beta _{j}(\vartheta,\phi),\\
\beta_{j;\phi}&=&
\frac{\partial}{\sin
\vartheta\partial\phi}\beta_{j}(\vartheta,\phi),
\\
\mathbb{E} \bigl\{ \widetilde{\beta}_{j;\vartheta}^{2} \bigr\} &=&
\frac{%
\partial^{2}}{\partial\vartheta^{2}}\mathbb{E} \bigl\{ \widetilde {\beta}%
_{j}^{2}
\bigr\} =\frac{\sum_{\mathbb{\ell}}b^{2}({\ell
}/{B^{j}})C_{%
\mathbb{\ell}}({(2\mathbb{\ell}+1)}/{(4\pi)})P_{\mathbb{\ell
}}^{\prime}(1)%
}{\sum_{\mathbb{\ell}}b^{2}({\ell}/{B^{j}})C_{\mathbb{\ell}}
({(2\mathbb{\ell}+1)}/{(4\pi)})},
\end{eqnarray*}
whence
\begin{eqnarray*}
&&\mathbb{E} \bigl\{ \mathcal{L}_{0}\bigl(A_{u}\bigl(
\widetilde{\beta}%
_{j}(x),S^{2}\bigr)\bigr) \bigr\}
\\
&&\qquad=2 \bigl\{ 1-\Phi(u) \bigr\} +4\pi \biggl\{ \frac{%
\sum_{\mathbb{\ell}}b^{2}({\ell}/{B^{j}})C_{\mathbb{\ell}}
({(2\mathbb{\ell}+1)}/{(4\pi)})P_{\mathbb{\ell}}^{\prime}(1)}{\sum_{\mathbb
{\ell}%
}b^{2}({\ell}/{B^{j}})C_{\mathbb{\ell}}({(2\mathbb{\ell}+1)}/{(4\pi)})}%
 \biggr\} \frac{ue^{-u^{2}/2}}{\sqrt{(2\pi)^{3}}}.
\end{eqnarray*}
Also,
\[
\mathbb{E} \bigl\{ \mathcal{L}_{1}\bigl(A_{u}\bigl(
\widetilde{\beta}%
_{j}(x),S^{2}\bigr)\bigr) \bigr\}
=\pi \biggl\{ \frac{\sum_{\mathbb{\ell
}}b^{2}({%
\ell}/{B^{j}})C_{\mathbb{\ell}}({(2\mathbb{\ell}+1)}/{(4\pi)})P_{\mathbb{%
\ell}}^{\prime}(1)}{\sum_{\mathbb{\ell}}b^{2}({\ell}/{B^{j}})C_{
\mathbb{\ell}}({(2\mathbb{\ell}+1)}/{(4\pi)})} \biggr\} ^{1/2}e^{-u^{2}/2}.
\]
Finally,
\[
\mathbb{E} \bigl\{ \mathcal{L}_{2}\bigl(A_{u}\bigl(
\widetilde{\beta}%
_{j}(x),S^{2}\bigr)\bigr) \bigr\}
=\rho_{0}(u)\mathcal{L}_{2}\bigl(S^{2}\bigr)=
\bigl\{ 1-\Phi (u) \bigr\} 4\pi,
\]
which completes the proof.
\end{pf}


In the case of spherical eigenfunctions, the previous lemma takes the
following simpler form; the proof is entirely analogous, and hence omitted.

\begin{corollary}
For the field $ \{ T_{\mathbb{\ell}}(\cdot) \} $, we have that
%
\begin{eqnarray}\label{sreekar}
\mathbb{E} \bigl\{ \mathcal{L}_{0}\bigl(A_{u}\bigl(
\widetilde{T}_{\mathbb{\ell}%
}(\cdot),S^{2}\bigr)\bigr) \bigr\} &=&2 \bigl\{ 1-
\Phi(u) \bigr\} +\frac{\mathbb{\ell}(
\mathbb{\ell}+1)}{2}\frac{ue^{-u^{2}/2}}{\sqrt{(2\pi)^{3}}}4\pi,
\nonumber
\\[-8pt]
\\[-8pt]
\nonumber
\mathbb{E} \bigl\{ \mathcal{L}_{1}\bigl(A_{u}\bigl(
\widetilde{T}_{\mathbb{\ell}%
}(\cdot),S^{2}\bigr)\bigr) \bigr\} &=&\pi \biggl\{
\frac{\mathbb{\ell}(\mathbb{\ell
}+1)}{2}%
 \biggr\} ^{1/2}e^{-u^{2}/2}
\end{eqnarray}
and
\[
\mathbb{E} \bigl\{ \mathcal{L}_{2}\bigl(A_{u}\bigl(
\widetilde{T}_{\mathbb{\ell}%
}(\cdot),S^{2}\bigr)\bigr) \bigr\} =4\pi\times
\bigl\{ 1-\Phi(u) \bigr\}.
\]
\end{corollary}

\begin{remark}
{Using the differential geometric definition of the Lipschitz--Killing
curvatures, it is easy to observe that}
\[
2\mathbb{E} \bigl\{ \mathcal{L}_{1}(A_{u}\bigl(
\widetilde{T}_{\mathbb{\ell}%
}(\cdot),S^{2}\bigr)) \bigr\} =\mathbb{E} \bigl\{
\operatorname{len}(\partial A_{u}\bigl(\widetilde {T}_{%
\mathbb{\ell}}(\cdot),S^{2}
\bigr)) \bigr\} ,
\]
where $\operatorname{len}(\partial A_{u}(\widetilde{T}_{\mathbb{\ell}}(\cdot),S^{2}))$
is the
usual length of the boundary region of the excursion set, in the usual
Hausdorff sense, {which can also be expressed as $\mathcal
{L}_{1}(\partial
A_{u}(T_{\mathbb{\ell}}(\cdot),S^{2}))$}. Hence,
\[
\mathbb{E} \bigl\{ \operatorname{len}\bigl(\partial A_{u}
\bigl(\widetilde{T}_{\mathbb{\ell}%
}(\cdot),S^{2}
\bigr)\bigr) \bigr\} =2\pi \biggl\{ \frac{\mathbb{\ell}(\mathbb{\ell
}+1)}{2}%
 \biggr\}
^{1/2}e^{-u^{2}/2},
\]
which for $u=0$ fits with well-known results on the expected value of nodal
lines for random spherical eigenfunctions (see \cite{Wig2} and the
references therein). Likewise
%
\begin{eqnarray}\label{latter}
&&\mathbb{E} \bigl\{ \operatorname{len}\bigl(\partial A_{u}\bigl(\widetilde{\beta
}_{j}(\cdot),S^{2}\bigr)\bigr)
\bigr\}
\nonumber
\\[-8pt]
\\[-8pt]
\nonumber
&&\qquad =2\pi \biggl\{
\frac{\sum_{\mathbb{\ell}}b^{2}({\ell
}/{B^{j}})C_{%
\mathbb{\ell}}({(2\mathbb{\ell}+1)}/{(4\pi)})P_{\mathbb{\ell
}}^{\prime}(1)%
}{\sum_{\mathbb{\ell}}b^{2}({\ell}/{B^{j}})C_{\mathbb{\ell}}
({(2\mathbb{\ell}+1)}/{(4\pi)})} \biggr\} ^{1/2}e^{-u^{2}/2}.
\end{eqnarray}
\end{remark}

These formulae can be made more explicit by setting a specific form for the
behaviour of the angular power spectrum $ \{ C_{\mathbb{\ell
}} \} $
and the weighting kernel $b(\cdot)$, see~\cite{fantaye2014} for numerical
results under conditions of astrophysical interest{.}

\section{Gaussian subordinated fields}
\label{sec:gaussian:subordinate}

\subsection{Local transforms of \texorpdfstring{$\beta_{j}(\cdot)$}
{betaj(.)}}

For statistical applications, it is often more interesting to consider
nonlinear transforms of random fields. For instance, in a CMB related
environment a lot of efforts have been spent to investigate local
fluctuations of angular power spectra; to this aim, moving averages of
squared wavelet/needlet coefficients are usually computed; see, for instance,
\cite{pietrobon1} and the references therein. Our purpose here is to derive
some rigorous results on the behaviour of these statistics.

To this aim, let us consider first the simple squared field
\begin{eqnarray*}
H_{2j}(x) &:=&H_{2}\bigl(\widetilde{\beta}_{j}(x)
\bigr)=\frac{\beta_{j}^{2}(x)}{
\sigma_{\beta_{j}}^{2}}-1,
\\
\sigma_{\beta_{j}}^{2} &:=&\sum_{\ell}b^{2}
\biggl(\frac{\ell}{B^{j}}\biggr)C_{%
\mathbb{\ell}}\frac{2\mathbb{\ell}+1}{4\pi}=\mathbb{E}
\beta_{j}^{2}(x).
\end{eqnarray*}
The expected value of Lipschitz--Killing curvatures for the excursion regions
of such fields is easily derived, indeed by the general Gaussian kinematic
formula we have, for $u\geq-1$
\begin{eqnarray*}
&&\mathbb{E} \bigl\{ \mathcal{L}_{0}^{\widetilde{\beta}%
_{j}}
\bigl(A_{u}\bigl(H_{2};S^{2}\bigr)\bigr) \bigr\}
\\
&&\qquad=\sum_{k=0}^{2}(2\pi )^{-k/2}
\mathcal{L}%
_{k}^{\widetilde{\beta}_{j}}\bigl(S^{2}\bigr)
\mathcal{M}_{k}^{\mathcal
{N}}\bigl((-\infty,-%
\sqrt{u+1})
\cup(\sqrt{u+1},\infty)\bigr)
\\
&&\qquad=\sum_{k=0}^{2}(2\pi)^{-k/2}
\mathcal{L}_{k}^{\widetilde{\beta}%
_{j}}\bigl(S^{2}\bigr)2
\mathcal{M}_{k}^{\mathcal{N}}\bigl((\sqrt{u+1},\infty)\bigr)
\\
&&\qquad=4\bigl(1-\Phi(\sqrt{u+1})\bigr)\\
&&\qquad\quad{}+\frac{1}{2\pi}\frac{\sum_{\mathbb{\ell}}b^{2}(
{\mathbb{\ell}}/{B^{j}})({(2\mathbb{\ell}+1)}/{(4\pi)})C_{\mathbb
{\ell}%
}P_{\mathbb{\ell}}^{\prime}(1)}{\sum_{\mathbb{\ell}}b^{2}(
{\mathbb{%
\ell}}/{B^{j}})({(2\mathbb{\ell}+1)}/{(4\pi)})C_{\mathbb{\ell
}}}
\mathcal{L}%
_{2}\bigl(S^{2}\bigr)\frac{e^{-(u+1)/2}}{\sqrt{2\pi}}2
\sqrt{u+1}.
\end{eqnarray*}
Likewise
\begin{eqnarray*}
&&\mathbb{E} \bigl\{ \mathcal{L}_{1}^{\widetilde{\beta}%
_{j}}
\bigl(A_{u}\bigl(H_{2};S^{2}\bigr)\bigr) \bigr\}
\\
&&\qquad=\sum_{k=0}^{1}(2\pi)^{-k/2}
\left[\matrix{ k+1
\vspace*{2pt}\cr
k}
 \right] \mathcal{L}_{k+1}^{\widetilde{\beta}_{j}}
\bigl(S^{2}\bigr)\mathcal {M}_{k}^{%
\mathcal{N}}\bigl((-
\infty,-\sqrt{u+1})\cup(\sqrt{u+1},\infty)\bigr)
\\
&&\qquad=\mathcal{L}_{1}^{\widetilde{\beta}_{j}}\bigl(S^{2}\bigr)\mathcal
{M}_{0}^{\mathcal{N%
}}\bigl((-\infty,-\sqrt{u+1})\cup(\sqrt{u+1},
\infty)\bigr)
\\
&&\qquad\quad{}+(2\pi)^{-1/2}\frac{\pi}{2}\mathcal{L}_{2}^{\widetilde{\beta}%
_{j}}
\bigl(S^{2}\bigr)\mathcal{M}_{1}^{\mathcal{N}}\bigl((-
\infty,-\sqrt{u+1})\cup (\sqrt{%
u+1},\infty)\bigr)
\\
&&\qquad=(2\pi)^{-1/2}\frac{\pi}{2}\biggl(4\pi\times\frac{\sum_{\mathbb{\ell
}}b^{2}(%
{\mathbb{\ell}}/{B^{j}})({(2\mathbb{\ell}+1)}/{(4\pi)})C_{\mathbb
{\ell}%
}P_{\mathbb{\ell}}^{\prime}(1)}{\sum_{\mathbb{\ell}}b^{2}(
{\mathbb{%
\ell}}/{B^{j}})({(2\mathbb{\ell}+1)}/{(4\pi)})C_{\mathbb{\ell}}}
\biggr)2\frac
{%
e^{-(u+1)/2}}{\sqrt{2\pi}}
\\
&&\qquad=2\pi\biggl(\frac{\sum_{\mathbb{\ell}}b^{2}({\mathbb{\ell
}}/{B^{j}})({(2\mathbb{\ell}+1)}/{(4\pi)})C_{\mathbb{\ell}}P_{\mathbb{\ell}}^{\prime
}(1)}{%
\sum_{\mathbb{\ell}}b^{2}({\mathbb{\ell}}/{B^{j}})({(2\mathbb {\ell}+1)}/{(4\pi)})C_{\mathbb{\ell}}}\biggr)e^{-(u+1)/2},
\end{eqnarray*}
which implies for the Euclidean LKC
\[
\mathbb{E} \bigl\{ \mathcal{L}_{1}\bigl(A_{u}
\bigl(H_{2};S^{2}\bigr)\bigr) \bigr\} =2\pi \biggl\{
\frac{\sum_{\mathbb{\ell}}b^{2}({\mathbb{\ell}}/{B^{j}})
({(2\mathbb{\ell}+1)}/{(4\pi)})C_{\mathbb{\ell}}P_{\mathbb{\ell}}^{\prime}(1)}{\sum_{%
\mathbb{\ell}}b^{2}({\mathbb{\ell}}/{B^{j}})({(2\mathbb{\ell}+1)}/{(4\pi)})C_{\mathbb{\ell}}} \biggr\} ^{1/2}e^{-(u+1)/2}
\]
and, therefore,
\[
\mathbb{E} \bigl\{ \mathcal{L}_{1}\bigl(\partial A_{u}
\bigl(H_{2};S^{2}\bigr)\bigr) \bigr\} =4\pi \biggl\{
\frac{\sum_{\mathbb{\ell}}b^{2}({\mathbb{\ell
}}/{B^{j}})({(2%
\mathbb{\ell}+1)}/{(4\pi)})C_{\mathbb{\ell}}P_{\mathbb{\ell}}^{\prime
}(1)}{%
\sum_{\mathbb{\ell}}b^{2}({\mathbb{\ell}}/{B^{j}})({(2\mathbb
{\ell}%
+1)}/{(4\pi)})C_{\mathbb{\ell}}} \biggr\} ^{1/2}e^{-(u+1)/2}.
\]
Finally,
\begin{eqnarray*}
&&\mathbb{E} \bigl\{ \mathcal{L}_{2}^{\widetilde{\beta}%
_{j}}\bigl(A_{u}
\bigl(H_{2};S^{2}\bigr)\bigr) \bigr\} \\
&&\qquad=\mathcal{L}_{2}^{\widetilde{\beta}
_{j}}
\bigl(S^{2}\bigr)\mathcal{M}_{0}^{\mathcal{N}}\bigl((-
\infty,-\sqrt{u+1})\cup (\sqrt{%
u+1},\infty)\bigr)
\\
&&\qquad=4\pi \biggl\{ \frac{\sum_{\mathbb{\ell}}b^{2}({\mathbb{\ell
}}/{B^{j}}%
)({(2\mathbb{\ell}+1)}/{(4\pi)})C_{\mathbb{\ell}}P_{\mathbb{\ell
}}^{\prime
}(1)}{\sum_{\mathbb{\ell}}b^{2}({\mathbb{\ell}}/{B^{j}})
({(2\mathbb{%
\ell}+1)}/{(4\pi)})C_{\mathbb{\ell}}} \biggr\} 2\bigl(1-\Phi(\sqrt{u+1})\bigr)
\end{eqnarray*}
entailing a Euclidean LKC
\[
\mathbb{E} \bigl\{ \mathcal{L}_{2}\bigl(A_{u}
\bigl(H_{2};S^{2}\bigr)\bigr) \bigr\} =4\pi \times 2\bigl(1-
\Phi(\sqrt{u+1})\bigr).
\]
It should be noted that the tail decay for the Euler characteristic and the
boundary length is much slower than in the Gaussian case. This is consistent
with the elementary fact that polynomial transforms shift angular power
spectra at higher frequencies, hence yielding a rougher path behaviour.
Likewise, for cubic transforms we have
\begin{eqnarray*}
&&\mathbb{E} \bigl\{ \mathcal{L}_{0}^{\widetilde{\beta
}_{j}}
\bigl(A_{u}\bigl(\widetilde{%
\beta}_{j}^{3}(x);S^{2}
\bigr)\bigr) \bigr\}\\
&&\qquad =2\bigl(1-\Phi\bigl(\sqrt[3]{u}\bigr)\bigr)\\
&&\qquad\quad{}+
\frac
{1}{2\pi}%
\frac{\sum_{\ell}b^{2}({\ell}/{B^{j}})({(2\ell+1)}/{(4\pi)
})C_{\ell
}P_{\ell}^{\prime}(1)}{\sum_{\ell}b^{2}({\ell}/{B^{j}})
({(2\ell+1)%
}/{(4\pi)})C_{\ell}}\mathcal{L}_{2}
\bigl(S^{2}\bigr)\frac{e^{-(\sqrt[3]{u})^{2}/2}}{
\sqrt{2\pi}}\sqrt[3]{u},
\\
&&\mathbb{E} \bigl\{ \mathcal{L}_{1}^{\widetilde{\beta
}_{j}}
\bigl(A_{u}\bigl(\widetilde{%
\beta}_{j}^{3}(x);S^{2}
\bigr)\bigr) \bigr\}
\\
&&\qquad =\pi\biggl(\frac{\sum_{\ell}b^{2}(
{\ell
}/{B^{j}})({(2\ell+1)}/{(4\pi)})C_{\ell}P_{\ell}^{\prime}(1)}{\sum_{\ell
}b^{2}({\ell}/{B^{j}})({(2\ell+1)}/{(4\pi)})C_{\ell}}\biggr)e^{-(\sqrt
[3]{u}%
)^{2}/2},
\\
&&\mathbb{E} \bigl\{ \mathcal{L}_{1}\bigl(\partial A_{u}
\bigl(\widetilde{\beta}%
_{j}^{3}(x);S^{2}
\bigr)\bigr) \bigr\} \\
&&\qquad=2\pi \biggl\{ \frac{\sum_{\ell}b^{2}(
{\ell
}/{B^{j}})({(2\ell+1)}/{(4\pi)})C_{\ell}P_{\ell}^{\prime}(1)}{\sum_{%
\mathbb{\ell}}b^{2}({\ell}/{B^{j}})({(2\ell+1)}/{(4\pi)})C_{\ell
}}%
 \biggr\}
^{1/2}e^{-(\sqrt[3]{u})^{2}/2},
\end{eqnarray*}
and finally
\begin{eqnarray*}
&&\mathbb{E} \bigl\{ \mathcal{L}_{2}^{\widetilde{\beta
}_{j}}\bigl(A_{u}
\bigl(\widetilde{%
\beta}_{j}^{3}(x);S^{2}
\bigr)\bigr) \bigr\} \\
&&\qquad=4\pi \biggl\{ \frac{\sum_{\ell
}b^{2}(%
{\ell}/{B^{j}})({(2\ell+1)}/{(4\pi)})C_{\ell}P_{\ell}^{\prime
}(1)}{%
\sum_{\ell}b^{2}({\ell}/{B^{j}})({(2\ell+1)}/{(4\pi)})C_{\ell}}%
 \biggr\} 2
\bigl(1-\Phi\bigl(\sqrt[3]{u}\bigr)\bigr)
\end{eqnarray*}
entailing an expected value for the excursion area given by
\[
\mathbb{E} \bigl\{ \mathcal{L}_{2}\bigl(A_{u}\bigl(
\widetilde{\beta}%
_{j}^{3}(x);S^{2}
\bigr)\bigr) \bigr\} =4\pi\bigl(1-\Phi\bigl(\sqrt[3]{u}\bigr)\bigr).
\]

Similar results could be easily derived for higher order polynomial
transforms; numerical evidence and astrophysical applications can be found
in \cite{fantaye2014}. However, as motivated above we believe it is much
more important to focus on transforms that entail some form of local
averaging, as these are likely to be more relevant for practitioners. To
this issue, we devote the rest of this section and a large part of the paper.

\subsection{Nonlocal transforms of \texorpdfstring{$\beta_{j}(\cdot)$}
{betaj(.)}}

We now consider the case of smoothed nonlinear functionals. We are
interested, for instance, in studying the LKCs for local estimates of the
angular power spectrum, which as mentioned before have already found many
important applications in a CMB related framework. To this aim, we
introduce, for every $x\in S^{2}$,
%
\begin{equation}
g_{j;q}(x):=\int_{S^{2}}K\bigl( \langle x,y \rangle
\bigr)H_{q}\bigl(\widetilde{%
\beta}_{j}(y)
\bigr)\,dy; \label{filipo}
\end{equation}
throughout the sequel, we shall assume that the following finite-order
expansion holds:
%
\begin{equation}
K(u)=\sum_{\ell=1}^{L_{K}}\frac{2\ell+1}{4\pi}
\kappa(\ell)P_{\ell
}(u)\qquad
\mbox{some fixed }L_{K}\in
\mathbb{N},u\in{}[ -1,1]. \label{kernexp}
\end{equation}
Here, as before we write $H_{q}(\cdot)$ for the Hermite polynomials. For $q=1$,
we just get the smoothed Gaussian process
%
\begin{equation}
g_{j}(x):=g_{j;1}(x)=\int_{S^{2}}K\bigl(
\langle x,y \rangle \bigr)\widetilde{%
\beta}_{j}(y)\,dy.
\label{kernel}
\end{equation}
The practical importance of the analysis of fields such as $g_{j;q}(\cdot)$ can
be motivated as follows. A crucial topic when dealing with cosmological data
is the analysis of isotropy properties. For instance, in a CMB related
framework a large amount of work has focused on the possible existence of
asymmetries in the behaviour of angular power spectra or bispectra across
different hemispheres (see, e.g., \cite{pietrobon1,rudjord2}).
In these papers, powers of wavelet coefficients at some frequencies $j$ are
averaged over different hemispheres to investigate the existence of
asymmetries/anisotropies in the CMB distribution; some evidence has been
reported, for instance, for power asymmetries with respect to the Milky Way
plane for frequencies corresponding to angular scales of a few degrees (such
effects are related in the cosmological literature to widely debated
anomalies known as \emph{the Cold Spot }and \emph{the Axis of Evil}; see
\cite{bennett2012,planckIS} and the references therein). To investigate
these anomalies, statistics which can be viewed as discretized versions
of $%
\sup_{x\in S^{2}}g_{j;q}(x)$ have been evaluated; their significance is
typically tested against Monte Carlo simulations, under the null of
isotropy. Our results below will provide the first rigorous derivation of
asymptotic properties in this settings.

Our first lemma is an immediate application of spherical Fourier analysis
techniques.

\begin{lemma}
The field $g_{j}(x)$ is zero-mean, finite variance and isotropic, with
covariance function
\[
\mathbb{E} \bigl\{ g_{j}(x_{1})g_{j}(x_{2})
\bigr\} =\frac{1}{\sigma
_{\beta
_{j}}^{2}}\sum_{\mathbb{\ell}}b^{2}
\biggl(\frac{\ell}{B^{j}}\biggr)\kappa ^{2}(\ell)%
\frac{2\ell+1}{4\pi}C_{\ell}P_{\ell}\bigl( \langle
x_{1},x_{2} \rangle\bigr).
\]
\end{lemma}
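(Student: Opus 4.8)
The plan is to reduce $g_j$ to an explicit, finite spherical-harmonic series and then read off all four assertions from the covariance structure of the coefficients $\{a_{\ell m}\}$. The key simplification, which I would note at the outset, is that because $b(\cdot)$ is compactly supported in $[B^{-1},B]$, the weight $b(\ell/B^j)$ is nonzero only for $\ell\in[B^{j-1},B^{j+1}]$; combined with the finite-order expansion (\ref{kernexp}) for $K$, every series in sight is a \emph{finite} sum, so no convergence or interchange-of-limit issues arise.

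First I would rewrite the kernel in harmonic form. Using the addition theorem $\frac{2\ell+1}{4\pi}P_\ell(\langle x,y\rangle)=\sum_{m=-\ell}^{\ell}Y_{\ell m}(x)\overline{Y}_{\ell m}(y)$, expansion (\ref{kernexp}) becomes
\begin{equation*}
K(\langle x,y\rangle)=\sum_{\ell=0}^{L_K}\kappa(\ell)\sum_{m=-\ell}^{\ell}Y_{\ell m}(x)\overline{Y}_{\ell m}(y).
\end{equation*}
Substituting this together with $\beta_j(y)=\sum_{\ell'}b(\ell'/B^j)\sum_{m'}a_{\ell' m'}Y_{\ell' m'}(y)$ into (\ref{kernel}) and using orthonormality $\int_{S^2}\overline{Y}_{\ell m}(y)Y_{\ell' m'}(y)\,dy=\delta_\ell^{\ell'}\delta_m^{m'}$ collapses the integral, yielding the clean representation
\begin{equation*}
g_j(x)=\sum_\ell \kappa(\ell)\,b(\ell/B^j)\sum_{m=-\ell}^{\ell}a_{\ell m}Y_{\ell m}(x).
\end{equation*}
Zero mean is then immediate from $\mathbb{E}a_{\ell m}=0$.

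Next I would compute the covariance directly from this representation. Since $g_j$ is real, $\mathbb{E}\{g_j(x_1)g_j(x_2)\}=\mathbb{E}\{g_j(x_1)\overline{g_j(x_2)}\}$, and expanding the product gives a double sum over $(\ell,m)$ and $(\ell',m')$ of $\kappa(\ell)\kappa(\ell')b(\ell/B^j)b(\ell'/B^j)\,\mathbb{E}\{a_{\ell m}\overline{a}_{\ell' m'}\}\,Y_{\ell m}(x_1)\overline{Y}_{\ell' m'}(x_2)$. Inserting $\mathbb{E}\{a_{\ell m}\overline{a}_{\ell' m'}\}=C_\ell\delta_\ell^{\ell'}\delta_m^{m'}$ diagonalizes the sum, and a second application of the addition theorem to $\sum_m Y_{\ell m}(x_1)\overline{Y}_{\ell m}(x_2)$ produces the asserted formula.

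Finally, the remaining two claims follow with no extra work: setting $x_1=x_2$ and using $P_\ell(1)=1$ gives the variance as the finite sum $\sum_\ell \kappa^2(\ell)b^2(\ell/B^j)\frac{2\ell+1}{4\pi}C_\ell<\infty$, establishing finite variance; and since the covariance depends on $x_1,x_2$ only through $\langle x_1,x_2\rangle$, while $g_j$ is Gaussian (a linear functional of the Gaussian array $\{a_{\ell m}\}$) and hence determined by its mean and covariance, the field is isotropic. There is no genuinely hard step here: the only points requiring care are the bookkeeping of complex conjugates in the covariance and the verification that all sums are finite, the latter guaranteed by the compact support of $b$ and the finite order of $K$. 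Alternatively, isotropy can be argued structurally, since $g_j$ is the convolution of the isotropic field $\beta_j$ against a rotation-invariant kernel.
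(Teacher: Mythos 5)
Your proof is correct, and it reaches the same formula by a route that is equivalent in substance but organized differently from the paper's. The paper works at the level of covariance functions: it writes $\mathbb{E}\{g_j(x_1)g_j(x_2)\}$ as a double integral of $K(\langle x_1,y_1\rangle)K(\langle x_2,y_2\rangle)$ against the Legendre expansion of $\mathbb{E}\{\beta_j(y_1)\beta_j(y_2)\}$ and collapses it by two applications of the reproducing-kernel identity $\int_{S^2}P_\ell(\langle x,y\rangle)P_{\ell'}(\langle y,z\rangle)\,dy=\frac{4\pi}{2\ell+1}\delta_{\ell}^{\ell'}P_\ell(\langle x,z\rangle)$. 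You instead diagonalize the convolution operator at the level of the field itself, obtaining the explicit harmonic representation $g_j(x)=\sum_{\ell}\kappa(\ell)b(\ell/B^j)\sum_m a_{\ell m}Y_{\ell m}(x)$, and then read the covariance off from $\mathbb{E}\{a_{\ell m}\overline{a}_{\ell'm'}\}=C_\ell\delta_\ell^{\ell'}\delta_m^{m'}$ plus the addition theorem. Since the reproducing-kernel identity \emph{is} the addition theorem combined with orthonormality of the $Y_{\ell m}$, the two computations are the same calculation in different clothing; your version has the advantage of exhibiting the spherical harmonic coefficients of $g_j$ explicitly (hence its angular power spectrum $\kappa^2(\ell)b^2(\ell/B^j)C_\ell$ for free), and your remarks on finiteness of all sums and on isotropy (either via Gaussianity plus rotation-invariant covariance, or structurally as a convolution against a rotation-invariant kernel) are sound. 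The paper's covariance-level formulation is the one that generalizes to the cases $q\geq 2$ treated immediately afterwards, where $H_q(\beta_j)$ admits no such simple harmonic representation and one must resort to Gaunt integrals and Wigner $3j$ coefficients, which is presumably why the authors chose it here.
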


\begin{pf}
Note first that
\begin{eqnarray*}
&&\mathbb{E} \bigl\{ g_{j}(x_{1})g_{j}(x_{2})
\bigr\}\\
&&\qquad=\frac{1}{\sigma
_{\beta_{j}}^{2}} \biggl\{ \int_{S^{2}\times S^{2}}K\bigl(
\langle x_{1},y_{1} \rangle\bigr)K\bigl( \langle
x_{2},y_{2} \rangle \bigr)\mathbb{E}%
 \bigl\{
\beta_{j}(y_{1})\beta_{j}(y_{2}) \bigr
\} \,dy_{1}\,dy_{2} \biggr\}
\\
&&\qquad=\frac{1}{\sigma_{\beta_{j}}^{2}}\int_{S^{2}\times S^{2}}K\bigl( \langle
x_{1},y_{1} \rangle\bigr)K\bigl( \langle
x_{2},y_{2} \rangle \bigr)\sum_{\ell}b^{2}
\biggl(\frac{\ell}{B^{j}}\biggr)\frac{2\ell+1}{4\pi}C_{\ell
}P_{\ell}
\bigl( \langle y_{1},y_{2} \rangle\bigr).
\end{eqnarray*}
Recall the reproducing kernel formula (see, e.g., \cite{marpecbook},
pages~248--249)
\begin{eqnarray*}
\int_{S^{2}}P_{\ell}\bigl( \langle
x_{1},y_{1} \rangle\bigr)P_{\ell
}\bigl( \langle
y_{1},y_{2} \rangle\bigr)\,dy_{1} &=&
\frac{4\pi}{2\ell
+1}%
P_{\ell}\bigl( \langle x_{1},y_{2}
\rangle\bigr),
\\
\int_{S^{2}}P_{\ell_{1}}\bigl( \langle
x_{1},y_{1} \rangle\bigr)P_{\ell
_{2}}\bigl( \langle
y_{1},y_{2} \rangle\bigr)\,dy_{1} &=&0,\qquad\ell
_{1}\neq\ell_{2},
\end{eqnarray*}
whence
\begin{eqnarray*}
&&\int_{S^{2}\times S^{2}}K\bigl( \langle x_{1},y_{1}
\rangle \bigr)K\bigl( \langle x_{2},y_{2} \rangle\bigr)\sum
_{\ell}b^{2}\biggl(\frac{\ell
}{%
B^{j}}\biggr)
\frac{2\ell+1}{4\pi}C_{\ell}P_{\ell}\bigl( \langle
y_{1},y_{2} \rangle\bigr)
\\
&&\qquad=\int_{S^{2}\times S^{2}}\sum_{\ell_{1}}
\frac{2\ell_{1}+1}{4\pi
}\kappa (\ell_{1})P_{\ell_{1}}\bigl( \langle
x_{1},y_{1} \rangle\bigr)\sum_{\ell
_{2}}
\frac{2\ell_{2}+1}{4\pi}\kappa(\ell_{2})P_{\ell_{2}}\bigl( \langle
x_{2},y_{2} \rangle\bigr)
\\
&&\hspace*{28pt}\qquad\quad{}\times\sum_{\ell}b^{2}\biggl(
\frac{\ell}{B^{j}}\biggr)\frac{2\ell+1}{4\pi}C_{
\mathbb{\ell}}P_{\mathbb{\ell}}\bigl(
\langle y_{1},y_{2} \rangle \bigr)\,dy_{1}\,dy_{2}
\\
&&\qquad=\int_{S^{2}}\sum_{\ell}b^{2}
\biggl(\frac{\ell}{B^{j}}\biggr)\frac{2\ell
+1}{4\pi}%
C_{\ell}\sum
_{\ell_{1}}\kappa(\ell_{1})\sum
_{\ell_{2}}\frac{2\ell
_{2}+1%
}{4\pi}\kappa(\ell_{2})P_{\ell_{2}}
\bigl( \langle x_{2},y_{2} \rangle\bigr)
\\
&&\hspace*{12pt}\qquad\quad{}\times\int_{S^{2}}\frac{2\ell_{1}+1}{4\pi}P_{\ell_{1}}\bigl(
\langle x_{1},y_{1} \rangle\bigr)P_{\ell}\bigl(
\langle y_{1},y_{2} \rangle \bigr)\,dy_{1}\,dy_{2}
\\
&&\qquad=\sum_{\ell}b^{2}\biggl(\frac{\ell}{B^{j}}
\biggr)\kappa(\ell)\frac{2\ell
+1}{4\pi}%
C_{\ell}\sum
_{\ell_{2}}\frac{2\ell_{2}+1}{4\pi}\kappa(\ell _{2})\\
&&\hspace*{23pt}\qquad{}\times \int
_{S^{2}}P_{\ell_{2}}\bigl( \langle x_{2},y_{2}
\rangle \bigr)P_{\ell}\bigl( \langle x_{1},y_{2}
\rangle\bigr)\,dy_{2}
\\
&&\qquad=\sum_{\ell}b^{2}\biggl(\frac{\ell}{B^{j}}
\biggr)\kappa^{2}(\ell)\frac{2\ell
+1}{%
4\pi}C_{\ell}P_{\ell}
\bigl( \langle x_{1},x_{2} \rangle\bigr),
\end{eqnarray*}
as claimed.
\end{pf}

The derivation of analogous results in the case of $q\geq2$ requires more
work and extra notation. In particular, we shall need the Wigner's $3j$
coefficients, which are defined by [for $m_{1}+m_{2}+m_{3}=0$, see \cite
{VMK}, expression (8.2.1.5)]
\begin{eqnarray*}
&&\pmatrix{ \mathbb{\ell}_{1} & \mathbb{
\ell}_{2} & \mathbb{\ell}_{3}
\vspace*{2pt}\cr
m_{1} & m_{2} & m_{3}} \\
&&\qquad :=(-1)^{\mathbb{\ell}_{1}+m_{1}}\sqrt{2\mathbb{\ell
}_{3}+1} \biggl[ \frac{(\mathbb{\ell}_{1}+\mathbb{\ell}_{2}-\mathbb{\ell
}_{3})!(\mathbb{%
\ell}_{1}-\mathbb{\ell}_{2}+\mathbb{\ell}_{3})!(\mathbb{\ell}_{1}-%
\mathbb{\ell}_{2}+\mathbb{\ell}_{3})!}{(\mathbb{\ell}_{1}+\mathbb
{\ell}%
_{2}+\mathbb{\ell}_{3}+1)!} \biggr] ^{1/2}
\\
&&\qquad\quad{} \times \biggl[ \frac{(\mathbb{\ell}_{3}+m_{3})!(\mathbb{\ell
}_{3}-m_{3})!%
}{(\mathbb{\ell}_{1}+m_{1})!(\mathbb{\ell}_{1}-m_{1})!(\mathbb{\ell}%
_{2}+m_{2})!(\mathbb{\ell}_{2}-m_{2})!} \biggr] ^{1/2}
\\
&&\qquad\quad{} \times\sum_{z}\frac{(-1)^{z}(\mathbb{\ell}_{2}+\mathbb{\ell}%
_{3}+m_{1}-z)!(\mathbb{\ell}_{1}-m_{1}+z)!}{z!(\mathbb{\ell
}_{2}+\mathbb{%
\ell}_{3}-\mathbb{\ell}_{1}-z)!(\mathbb{\ell}_{3}+m_{3}-z)!(\mathbb
{\ell}%
_{1}-\mathbb{\ell}_{2}-m_{3}+z)!},
\end{eqnarray*}
where the summation runs over all $z$'s such that the factorials are
nonnegative. This expression becomes somewhat neater for $%
m_{1}=m_{2}=m_{3}=0$, where we have
\begin{eqnarray}\label{appe}
&&\pmatrix
{ \mathbb{\ell}_{1} & \mathbb{
\ell}_{2} & \mathbb{\ell}_{3}
\vspace*{2pt}\cr
0 & 0 & 0}
\nonumber
\\[-8pt]
\\[-8pt]
\nonumber
&&\qquad=
\cases{ %
0,\qquad\mbox{for }\mathbb{\ell}_{1}+\mathbb{\ell}_{2}+\mathbb{\ell }_{3}
\mbox{ odd},
\vspace*{2pt}\cr
\displaystyle (-1)^{{(\mathbb{\ell}_{1}+\mathbb{\ell}_{2}-\mathbb{\ell
}_{3})}/{2}}%
\vspace*{2pt}\cr
\quad{}\times\displaystyle\frac{ [ (\mathbb{\ell}_{1}+\mathbb{\ell}_{2}+\mathbb{\ell
}_{3})/2%
 ] !}{ [ (\mathbb{\ell}_{1}+\mathbb{\ell}_{2}-\mathbb{\ell}%
_{3})/2 ] ! [ (\mathbb{\ell}_{1}-\mathbb{\ell}_{2}+\mathbb
{\ell}%
_{3})/2 ] ! [ (-\mathbb{\ell}_{1}+\mathbb{\ell}_{2}+\mathbb
{\ell}%
_{3})/2 ] !}\vspace*{2pt}\cr
\quad{}\times\displaystyle \biggl\{ \frac{(\mathbb{\ell}_{1}+\mathbb{\ell}_{2}-%
\mathbb{\ell}_{3})!(\mathbb{\ell}_{1}-\mathbb{\ell}_{2}+\mathbb{\ell
}%
_{3})!(-\mathbb{\ell}_{1}+\mathbb{\ell}_{2}+\mathbb{\ell
}_{3})!}{(\mathbb{%
\ell}_{1}+\mathbb{\ell}_{2}+\mathbb{\ell}_{3}+1)!}
\biggr\} ^{1/2},
\vspace*{2pt}\cr
\qquad\hspace*{11pt}\mbox{for }\mathbb{\ell}_{1}+\mathbb{\ell}_{2}+\mathbb{
\ell}_{3}\mbox{ even}.}\nonumber
\end{eqnarray}
It is occasionally more convenient to focus on Clebsch--Gordan coefficients,
which are related to the Wigner's by a simple change of normalization,
for example,
%
\begin{equation}
C_{\ell_{1}m_{1}\ell_{2}m_{2}}^{\ell_{3}m_{3}}:=\frac{(-1)^{\ell
_{3}-m_{3}}}{\sqrt{2\ell_{3}+1}}\pmatrix{ \mathbb{\ell}_{1} & \mathbb{\ell}_{2} &
\mathbb{\ell}_{3}
\vspace*{2pt}\cr
m_{1} & m_{2} & -m_{3}}. \label{cgdef}
\end{equation}
Wigner's $3j$ coefficients are elements of unitary matrices which intertwine
alternative reducible representations of the group of rotations
$\operatorname{SO}(3)$, and
because of this emerge naturally in the evaluation of multiple
integrals of
spherical harmonics (see Section~3.5.2 of \cite{marpecbook}) . As a
consequence, they also appear in the covariances of nonlinear transforms;
for $q=2$, we have indeed

\begin{lemma}
The field $g_{j;2}(x)$ is zero-mean, finite variance and isotropic, with
covariance function
\begin{eqnarray*}
&&\mathbb{E} \bigl\{ g_{j;2}(x_{1})g_{j;2}(x_{2})
\bigr\} \\
&&\qquad=
\frac{2}{\sigma_{\beta_{j}}^{4}}\sum_{\mathbb{\ell}}\kappa ^{2}(
\mathbb{%
\ell})\frac{2\mathbb{\ell}+1}{4\pi}\sum_{\mathbb{\ell}_{1}\mathbb
{\ell}%
_{2}}b^{2}
\biggl(\frac{\mathbb{\ell}_{1}}{B^{j}}\biggr)b^{2}\biggl(\frac{\mathbb{\ell
}_{2}}{%
B^{j}}\biggr)
\\
&&\hspace*{105pt}\qquad\quad{}\times\frac{(2\mathbb{\ell}_{1}+1)(2\mathbb{\ell}_{2}+1)}{4\pi}\\
&&\hspace*{105pt}\qquad\quad{}\times C_{
\mathbb{\ell}_{1}}C_{\mathbb{\ell}_{2}}\pmatrix{ \mathbb{\ell} & \mathbb{\ell}_{1} & \mathbb{
\ell}_{2}
\vspace*{2pt}\cr
0 & 0 & 0} ^{2}P_{\mathbb{\ell}}
\bigl( \langle x_{1},x_{2} \rangle \bigr).
\end{eqnarray*}
\end{lemma}

\begin{pf}
Note first that
\begin{eqnarray*}
&&\mathbb{E} \bigl\{ g_{j;2}(x_{1})g_{j;2}(x_{2})
\bigr\}\\
&&\qquad =\mathbb{E} \biggl\{ \int_{S^{2}}K\bigl( \langle
x_{1},y_{1} \rangle\bigr)H_{2}\bigl(\widetilde {
\beta}%
_{j}(y_{1})\bigr)\,dy_{1}\int
_{S^{2}}K\bigl( \langle x_{2},y_{2} \rangle
\bigr)H_{2}\bigl(%
\widetilde{\beta}_{j}(y_{2})
\bigr)\,dy_{2} \biggr\}
\\
&&\qquad=\int_{S^{2}\times S^{2}}K\bigl( \langle x_{1},y_{1}
\rangle \bigr)K\bigl( \langle x_{2},y_{2} \rangle\bigr)
\mathbb{E} \bigl\{ H_{2}\bigl(\widetilde{%
\beta}_{j}(y_{1})\bigr)H_{2}\bigl(\widetilde{
\beta}_{j}(y_{2})\bigr) \bigr\} \,dy_{1}\,dy_{2}
\\
&&\qquad=\frac{2}{\sigma_{\beta_{j}}^{4}}\int_{S^{2}\times S^{2}}K\bigl( \langle
x_{1},y_{1} \rangle\bigr)K\bigl( \langle
x_{2},y_{2} \rangle\bigr) \\
&&\hspace*{82pt}{}\times\biggl\{ \sum
_{\mathbb{\ell}}b^{2}\biggl(\frac{\mathbb{\ell}}{B^{j}}\biggr)
\frac{2\mathbb
{\ell}%
+1}{4\pi}C_{\mathbb{\ell}}P_{\mathbb{\ell}}\bigl( \langle
y_{1},y_{2} \rangle\bigr) \biggr\} ^{2}\,dy_{1}\,dy_{2}
\\
&&\qquad=\frac{2}{\sigma_{\beta_{j}}^{4}}\int_{S^{2}\times S^{2}}\sum
_{\mathbb{%
\ell}_{1}}\frac{2\mathbb{\ell}_{1}+1}{4\pi}\kappa(\mathbb{\ell
}_{1})P_{%
\mathbb{\ell}_{1}}\bigl( \langle x_{1},y_{1}
\rangle\bigr)\sum_{\mathbb
{%
\ell}_{2}}\frac{2\mathbb{\ell}_{2}+1}{4\pi}\kappa(
\mathbb{\ell }_{2})P_{%
\mathbb{\ell}_{2}}\bigl( \langle x_{2},y_{2}
\rangle\bigr)
\\
&&\hspace*{48pt}\qquad\quad{}\times\sum_{\mathbb{\ell}_{3}\mathbb{\ell}_{4}}b^{2}\biggl(
\frac{\mathbb
{\ell
}_{3}}{B^{j}}\biggr)b^{2}\biggl(\frac{\mathbb{\ell}_{4}}{B^{j}}\biggr)
\frac{2\mathbb
{\ell}%
_{3}+1}{4\pi}\frac{2\mathbb{\ell}_{4}+1}{4\pi}\\
&&\hspace*{76pt}\qquad\quad{}\times C_{\mathbb{\ell
}_{3}}C_{%
\mathbb{\ell}_{4}}P_{\mathbb{\ell}_{3}}
\bigl( \langle y_{1},y_{2} \rangle\bigr)P_{\mathbb{\ell}_{4}}
\bigl( \langle y_{1},y_{2} \rangle\bigr)\,dy_{1}\,dy_{2},
\end{eqnarray*}
where in the third step we have used the covariance formula for Hermite
polynomials in zero-mean, unit variance Gaussian variables (see, e.g.,
\cite{marpecbook}, Remark~4.10)
%
\begin{equation}
\mathbb{E} \bigl\{ H_{q}(X)H_{q^{\prime}}(Y) \bigr\} =\delta
_{q}^{q^{\prime
}}q! \{ \mathbb{E}XY \} ^{q},
\label{covhermite}
\end{equation}
which in this case yields
\[
\mathbb{E} \bigl\{ H_{2}\bigl(\widetilde{\beta}_{j}(y_{1})
\bigr)H_{2}\bigl(\widetilde {\beta }_{j}(y_{2})
\bigr) \bigr\} =\frac{2}{\sigma_{\beta_{j}}^{4}} \biggl\{ \sum_{%
\mathbb{\ell}}b^{2}
\biggl(\frac{\mathbb{\ell}}{B^{j}}\biggr)\frac{2\mathbb{\ell
}+1}{%
4\pi}C_{\mathbb{\ell}}P_{\mathbb{\ell}}
\bigl( \langle y_{1},y_{2} \rangle\bigr) \biggr\}
^{2}.
\]
Now recall that\vspace*{-1pt}
\begin{eqnarray*}
&&\int_{S^{2}}P_{\mathbb{\ell}_{1}}\bigl( \langle
x_{1},y_{1} \rangle\bigr)P_{%
\mathbb{\ell}_{3}}\bigl( \langle
y_{1},y_{2} \rangle\bigr)P_{\mathbb
{\ell}%
_{4}}\bigl( \langle
y_{1},y_{2} \rangle\bigr)\,dy_{1}
\\[-2pt]
&&\qquad=\frac{(4\pi)^{3}}{(2\mathbb{\ell}_{1}+1)(2\mathbb{\ell
}_{3}+1)(2\mathbb{%
\ell}_{4}+1)}
\\[-1pt]
&&\qquad\quad{}\times\int_{S^{2}}\sum_{m_{1}m_{3}m_{4}}Y_{\mathbb{\ell
}_{1}m_{1}}(y_{1})%
\overline{Y}_{\mathbb{\ell}_{1}m_{1}}(x_{1})Y_{\mathbb{\ell}%
_{3}m_{3}}(y_{1})
\overline{Y}_{\mathbb{\ell}_{3}m_{3}}(y_{2})\\
&&\hspace*{93pt}{}\times Y_{\mathbb
{%
\ell}_{4}m_{4}}(y_{1})
\overline{Y}_{\mathbb{\ell}_{4}m_{4}}(y_{2})\,dy_{1}
\\[-1pt]
&&\qquad=\biggl(\frac{(4\pi)^{5}}{(2\mathbb{\ell}_{1}+1)(2\mathbb{\ell
}_{3}+1)(2\mathbb{%
\ell}_{4}+1)}\biggr)^{1/2}
\\[-2pt]
&&\qquad\quad{}\times\sum_{m_{1}m_{3}m_{4}}\pmatrix{ \mathbb{\ell}_{1} & \mathbb{\ell}_{3} & \mathbb{
\ell}_{4}
\vspace*{2pt}\cr
m_{1} & m_{3} & m_{4}%
} \pmatrix{ \mathbb{
\ell}_{1} & \mathbb{\ell}_{3} & \mathbb{\ell}_{4}
\vspace*{2pt}\cr
0 & 0 & 0}\\[-1pt]
&&\qquad\quad{}\times \overline{Y}_{\mathbb{\ell}_{1}m_{1}}(x_{1})
\overline {Y}_{\mathbb{%
\ell}_{3}m_{3}}(y_{2})\overline{Y}_{\mathbb{\ell
}_{4}m_{4}}(y_{2}).\vspace*{-2pt}
\end{eqnarray*}
Likewise\vspace*{-2pt}
\begin{eqnarray*}
&&\int_{S^{2}}P_{\mathbb{\ell}_{2}}\bigl( \langle
x_{2},y_{2} \rangle\bigr)%
\overline{Y}_{\mathbb{\ell}_{3}m_{3}}(y_{2})
\overline{Y}_{\mathbb{\ell
}%
_{4}m_{4}}(y_{2})\,dy_{2}
\\[-1pt]
&&\qquad=\frac{4\pi}{2\mathbb{\ell}_{2}+1}\int_{S^{2}}\sum
_{m_{2}}\overline {Y}_{%
\mathbb{\ell}_{2}m_{2}}(y_{2})Y_{\mathbb{\ell
}_{2}m_{2}}(x_{2})
\overline{Y}%
_{\mathbb{\ell}_{3}m_{3}}(y_{2})\overline{Y}_{\mathbb{\ell}%
_{4}m_{4}}(y_{2})\,dy_{2}
\\[-1pt]
&&\qquad=\sqrt{\frac{(4\pi)(2\mathbb{\ell}_{3}+1)(2\mathbb{\ell
}_{4}+1)}{2\mathbb{%
\ell}_{2}+1}}\\[-1pt]
&&\qquad\quad{}\times \sum_{m_{2}}\pmatrix{ \mathbb{\ell}_{2} & \mathbb{
\ell}_{3} & \mathbb{\ell}_{4}
\vspace*{2pt}\cr
m_{2} & m_{3} & m_{4}}\pmatrix{\mathbb{
\ell}_{2} & \mathbb{\ell}_{3} & \mathbb{\ell}_{4}
\vspace*{2pt}\cr
0 & 0 & 0%
} Y_{\mathbb{\ell}_{2}m_{2}}(x_{2}).\vspace*{-1pt}
\end{eqnarray*}
Using the orthonormality properties of Wigner's $3j$ coefficients (see again
\cite{marpecbook}, Chapter~3.5), we have\vspace*{-1pt}
\[
\sum_{m_{3}m_{4}}\pmatrix{
\mathbb{\ell}_{1} & \mathbb{\ell}_{3} & \mathbb{
\ell}_{4}
\vspace*{2pt}\cr
m_{1} & m_{3} & m_{4}}\pmatrix{ \mathbb{
\ell}_{2} & \mathbb{\ell}_{3} & \mathbb{\ell}_{4}
\vspace*{2pt}\cr
m_{2} & m_{3} & m_{4}} =\frac{\delta_{m_{1}}^{m_{2}}\delta_{\mathbb{\ell
}_{1}}^{\mathbb{%
\ell}_{2}}}{(2\mathbb{\ell}_{1}+1)},\vspace*{-1pt}
\]
whence we get\vspace*{-1pt}
\begin{eqnarray*}
&&\mathbb{E} \bigl\{ g_{j;2}(x_{1})g_{j;2}(x_{2})
\bigr\}\\[-1pt]
&&\qquad =\frac{2}{\sigma_{\beta_{j}}^{4}}\sum_{\mathbb{\ell}}\kappa ^{2}(
\mathbb{%
\ell})\frac{2\mathbb{\ell}+1}{4\pi}\\[-1pt]
&&\hspace*{32pt}\qquad\quad{}\times\sum_{\mathbb{\ell}_{1}\mathbb
{\ell}%
_{2}}b^{2}
\biggl(\frac{\mathbb{\ell}_{1}}{B^{j}}\biggr)b^{2}\biggl(\frac{\mathbb{\ell
}_{2}}{%
B^{j}}\biggr)
\\[-1pt]
&&\hspace*{60pt}\qquad\quad{}\times\frac{(2\mathbb{\ell}_{1}+1)(2\mathbb{\ell}_{2}+1)}{4\pi}C_{
\mathbb{\ell}_{1}}C_{\mathbb{\ell}_{2}}\pmatrix{ \mathbb{\ell} & \mathbb{\ell}_{1} & \mathbb{
\ell}_{2}
\vspace*{2pt}\cr
0 & 0 & 0} ^{2}P_{\mathbb{\ell}}
\bigl( \langle x_{1},x_{2} \rangle \bigr),
\end{eqnarray*}
as claimed. As a special case, the variance is provided by
\begin{eqnarray*}
\mathbb{E}g_{j;2}^{2}(x) &=&\frac{2}{\sigma_{\beta_{j}}^{4}}\sum
_{\mathbb{%
\ell}}\kappa^{2}(\mathbb{\ell})\frac{2\mathbb{\ell}+1}{4\pi}
\\
&&\hspace*{32pt}{}\times\sum_{%
\mathbb{\ell}_{1}\mathbb{\ell}_{2}}b^{2}\biggl(\frac{\mathbb{\ell
}_{1}}{B^{j}}%
\biggr)b^{2}\biggl(\frac{\mathbb{\ell}_{2}}{B^{j}}\biggr)
\\
&&\hspace*{60pt}{}\times\frac{(2\mathbb{\ell}_{1}+1)(2\mathbb{\ell}_{2}+1)}{4\pi}C_{
\mathbb{\ell}_{1}}C_{\mathbb{\ell}_{2}}\pmatrix{ \mathbb{\ell} & \mathbb{\ell}_{1} & \mathbb{
\ell}_{2}
\vspace*{2pt}\cr
0 & 0 & 0} ^{2}.
\end{eqnarray*}
\upqed\end{pf}

\begin{remark}
Since the field $ \{ g_{j;2}(\cdot) \} $ has finite-variance and
it is
isotropic, it admits itself a spectral representation. Indeed, it is a
simple computation to show that the corresponding angular power
spectrum is
provided by
%
\begin{eqnarray}\label{colizzi}
C_{\mathbb{\ell};j,2}&:=&\frac{2}{\sigma_{\beta_{j}}^{4}}\kappa^{2}(%
\mathbb{
\ell})\sum_{\mathbb{\ell}_{1}\mathbb{\ell}_{2}}b^{2}\biggl(
\frac{%
\mathbb{\ell}_{1}}{B^{j}}\biggr)b^{2}\biggl(\frac{\mathbb{\ell}_{2}}{B^{j}}\biggr)
\frac
{(2%
\mathbb{\ell}_{1}+1)(2\mathbb{\ell}_{2}+1)}{4\pi}
\nonumber
\\[-8pt]
\\[-8pt]
\nonumber
&&\hspace*{58pt}{}\times C_{\mathbb{\ell
}_{1}}C_{%
\mathbb{\ell}_{2}}\pmatrix{\mathbb{\ell} & \mathbb{\ell}_{1} & \mathbb{
\ell}_{2}
\vspace*{2pt}\cr
0 & 0 & 0} ^{2},
\end{eqnarray}
for $\mathbb{\ell}=1,2,\ldots.$ This result will have a great relevance
for the
practical implementation of the findings in the next sections.
\end{remark}

\subsubsection{Higher-order transforms}

The general case of nonlinear transforms with $q\geq3$ can be dealt with
analogous lines; the main difference being the appearance of multiple
integrals of spherical harmonics of order greater than 3, and hence
so-called higher order Gaunt integrals and convolutions of Clebsch--Gordan
coefficients. For brevity's sake, we provide only the basic details; we
refer to \cite{marpecbook} for a more detailed discussion on nonlinear
transforms of Gaussian spherical harmonics. Here, we simply recall the
definition of the multiple Gaunt integral (see~\cite{marpecbook}, Remark~6.30 and Theorem~6.31), which is given by
\[
\mathcal{G}(\ell_{1},m_{1};\ldots \ell_{q},m_{q};
\ell,m):=\int_{S^{2}}Y_{\ell
_{1}m_{1}}(x)\cdots  Y_{\ell_{q}m_{q}}(x)Y_{\ell m}(x)\,d
\sigma(x),
\]
where the coefficients $\mathcal{G}(\ell_{1},m_{1};\ldots \ell
_{q},m_{q};\ell
,m)$ can be expressed as multiple convolution of Wigner/Clebsch--Gordan terms
(see \ref{cgdef}),
\begin{eqnarray*}
&&\mathcal{G}(\ell_{1},m_{1};\ldots \ell_{q},m_{q};
\ell,m)\\
&&\qquad=(-1)^{m}\sqrt {\frac{%
(2\ell_{1}+1)\cdots (2\ell_{q}+1)}{(4\pi)^{q-1}(2\ell+1)}}
\\
&&\qquad\quad{}\times\sum_{\lambda_{1}\cdots \lambda_{q-2}}C_{\ell_{1}0\ell
_{2}0}^{\lambda
_{1}0}C_{\lambda_{1}0\ell_{3}0}^{\lambda_{2}0}\cdots  C_{\lambda
_{q-2}0\ell
_{q}0}^{\ell0}
\\
&&\qquad\quad{}\times\sum_{\mu_{1}\cdots \mu_{q-2}}C_{\ell_{1}m_{1}\ell
_{2}m_{2}}^{\lambda_{1}\mu_{1}}C_{\lambda_{1}\mu_{1}\ell
_{3}m_{3}}^{\lambda_{2}\mu_{2}}\cdots  C_{\lambda_{q-2}\mu_{q-2}\ell
_{q}m_{q}}^{\ell m}.
\end{eqnarray*}
Following also \cite{marpecbook}, equation (6.40), let us introduce the
shorthand
notation
%
\begin{eqnarray} \label{colizzi2}
C_{\ell_{1}0\ell_{2}0\cdots \ell_{q}0}^{\lambda_{1}\cdots \lambda_{q-2}\ell
0}&:=&C_{\ell_{1}0\ell_{2}0}^{\lambda_{1}0}C_{\lambda_{1}0\ell
_{3}0}^{\lambda_{2}0}\cdots  C_{\lambda_{q-2}0\ell_{q}0}^{\ell0},
\nonumber
\\[-8pt]
\\[-8pt]
\nonumber
\mathcal{C}(\ell_{1},\ldots,\ell_{q},\ell)
&:=&\sum
_{\lambda_{1}\cdots \lambda
_{q-2}} \bigl\{ C_{\ell_{1}0\ell_{2}0\cdots \ell_{q}0}^{\lambda
_{1}\cdots \lambda
_{q-2}\ell0} \bigr\}
^{2}.
\end{eqnarray}
It should be noted that, from the unitary properties of Clebsch--Gordan
coefficients
\begin{eqnarray*}
&&\sum_{\ell}\mathcal{C}(\ell_{1},\ldots,
\ell_{q},\ell)\\
&&\qquad=\sum_{\lambda
_{1}\cdots \lambda_{q-2}} \bigl\{
C_{\ell_{1}0\ell_{2}0}^{\lambda
_{1}0} \bigr\} ^{2}\cdots \sum
_{\ell} \bigl\{ C_{\lambda_{q-2}0\ell
_{q}0}^{\ell0} \bigr\}
^{2}=\cdots =1.
\end{eqnarray*}

\begin{lemma}
For general $q\geq3$, the field $g_{j;q}(x)$ is zero-mean, finite variance
and isotropic, with covariance function
\begin{eqnarray*}
&&\mathbb{E} \bigl\{ g_{j;q}(x_{1})g_{j;q}(x_{2})
\bigr\}
\\
&&\qquad=\frac{q!}{\sigma_{\beta_{j}}^{2q}}\sum_{\mathbb{\ell}}\kappa^{2}(%
\mathbb{\ell})\sum_{\mathbb{\ell}_{1}\cdots \mathbb{\ell}_{q}}\mathcal{C} 
(
\ell_{1},\ldots,\ell_{q},\ell)\\
&&\hspace*{116pt}{}\times \Biggl[ \prod
_{k=1}^{q}b^{2}\biggl(\frac
{\mathbb{%
\ell}_{k}}{B^{j}}
\biggr)\frac{2\mathbb{\ell}_{k}+1}{4\pi}C_{\mathbb{\ell
}_{k}}%
 \Biggr] P_{\mathbb{\ell}}
\bigl( \langle x_{1},x_{2} \rangle\bigr).
\end{eqnarray*}
\end{lemma}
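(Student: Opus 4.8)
The plan is to follow the argument of the previous Lemma (the case $q=2$) essentially verbatim, replacing the single three-harmonic Gaunt integral by the multiple Gaunt integral $\mathcal{G}$ recalled above. First I would write
\[
\mathbb{E}\left\{ g_{j;q}(x_{1})g_{j;q}(x_{2})\right\} =\int_{S^{2}\times S^{2}}K(\left\langle x_{1},y_{1}\right\rangle )K(\left\langle x_{2},y_{2}\right\rangle )\,\mathbb{E}\left\{ H_{q}(\beta _{j}(y_{1}))H_{q}(\beta _{j}(y_{2}))\right\} dy_{1}dy_{2},
\]
and then apply the diagram (Mehler) identity for Hermite polynomials of equal order: since $\beta _{j}$ has unit variance, $\mathbb{E}\{H_{q}(\beta _{j}(y_{1}))H_{q}(\beta _{j}(y_{2}))\}=q!\,[\Gamma _{j}(\left\langle y_{1},y_{2}\right\rangle )]^{q}$, where $\Gamma _{j}(\left\langle y_{1},y_{2}\right\rangle ):=\sum_{\ell }b^{2}(\frac{\ell }{B^{j}})\frac{2\ell +1}{4\pi }C_{\ell }P_{\ell }(\left\langle y_{1},y_{2}\right\rangle )$ is the covariance of $\beta _{j}$ computed earlier. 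This produces the prefactor $q!$, and expanding the $q$-th power gives $\sum_{\ell _{1}\dots \ell _{q}}\prod_{k=1}^{q}b^{2}(\frac{\ell _{k}}{B^{j}})\frac{2\ell _{k}+1}{4\pi }C_{\ell _{k}}\prod_{k=1}^{q}P_{\ell _{k}}(\left\langle y_{1},y_{2}\right\rangle )$.

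Next I would pass everything to spherical harmonics. Expanding each kernel via (\ref{kernexp}) and each Legendre factor via the addition theorem $P_{\ell }(\left\langle u,v\right\rangle )=\frac{4\pi }{2\ell +1}\sum_{m}Y_{\ell m}(u)\overline{Y}_{\ell m}(v)$, the $y_{1}$-integral becomes an integral of $q+1$ spherical harmonics, namely the one produced by $K(\left\langle x_{1},y_{1}\right\rangle )$ together with the $q$ factors $Y_{\ell _{k}m_{k}}(y_{1})$; this is exactly the multiple Gaunt integral $\mathcal{G}$, whose value is the convolution of Clebsch-Gordan coefficients recorded just before (\ref{colizzi2}). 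The identical manipulation handles the $y_{2}$-integral, with an a priori independent kernel index. At this point the summand carries two chains of Clebsch-Gordan symbols, one from each Gaunt integral, together with the free harmonics in the variables $x_{1}$ and $x_{2}$.

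The decisive step is the contraction of the magnetic quantum numbers. Summing over all $m$-indices and invoking the unitary (orthonormality) relations of the Clebsch-Gordan coefficients --- the iterated analogue of the single $3j$ orthonormality relation used in the $q=2$ proof --- forces the two outer kernel indices to coincide at a common value $\ell$, collapses the internal labels $\lambda _{1},\dots ,\lambda _{q-2}$ and $\mu _{1},\dots ,\mu _{q-2}$, and squares the surviving zero-projection chains into precisely $\mathcal{C}(\ell _{1},\dots ,\ell _{q},\ell )$ as defined in (\ref{colizzi2}). The two kernel coefficients then merge into $\kappa ^{2}(\ell )$, while the free harmonics in $x_{1},x_{2}$ recombine through the addition theorem into $P_{\ell }(\left\langle x_{1},x_{2}\right\rangle )$; collecting the factors yields the asserted expression.

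I expect the main obstacle to be the bookkeeping of this penultimate step: keeping track of which magnetic numbers are summed inside each Gaunt chain, and verifying that the iterated Clebsch-Gordan orthonormality relations collapse the $q-2$ auxiliary labels into the single squared object $\mathcal{C}$ without leaving residual cross-terms, rather than the relatively transparent single contraction available when $q=2$. The zero-mean, finite-variance and isotropy assertions follow exactly as in the earlier Lemmas, isotropy being a consequence of the rotational invariance of both $K$ and the law of $\beta _{j}$.
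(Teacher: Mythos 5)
Your proposal is correct and follows essentially the same route as the paper: the Mehler identity $\mathbb{E}\{H_{q}(\beta_{j}(y_{1}))H_{q}(\beta_{j}(y_{2}))\}=q!\,[\Gamma_{j}(\langle y_{1},y_{2}\rangle)]^{q}$ (which the paper also obtains, via the isonormal white-noise representation and the It\^{o} isometry), followed by expansion into spherical harmonics, identification of the $y_{1}$- and $y_{2}$-integrals as multiple Gaunt integrals, and contraction of the magnetic quantum numbers via Clebsch--Gordan unitarity to produce $\mathcal{C}(\ell_{1},\dots,\ell_{q},\ell)$, $\kappa^{2}(\ell)$ and $P_{\ell}(\langle x_{1},x_{2}\rangle)$. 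The only cosmetic difference is that you invoke the diagram formula directly rather than passing through the multiple Wiener--It\^{o} integral representation, which changes nothing of substance.
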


\begin{pf}
We have
\begin{eqnarray*}
\mathbb{E}g_{j;q}^{2}(x) &=&\mathbb{E} \biggl\{ \int
_{S^{2}}\int_{S^{2}}K\bigl( \langle
x,y_{1} \rangle\bigr)K\bigl( \langle x,y_{2} \rangle
\bigr)H_{q}\bigl(\widetilde{\beta }_{j}(y_{1})
\bigr)H_{q}\bigl(\widetilde{%
\beta}_{j}(y_{2})
\bigr)\,dy_{1}\,dy_{2} \biggr\}
\\
&=&\frac{q!}{\sigma_{\beta_{j}}^{2q}}\int_{S^{2}}\int_{S^{2}}K
\bigl(%
\langle x,y_{1} \rangle\bigr)K\bigl( \langle
x,y_{2} \rangle\bigr) \\
&&\hspace*{48pt}{}\times\biggl\{ \sum_{\mathbb{\ell}}b^{2}
\biggl(\frac{\ell}{B^{j}}\biggr)\frac{2\ell+1}{4\pi}P_{
\mathbb{\ell}}\bigl( \langle
y_{1},y_{2} \rangle\bigr) \biggr\} ^{q}\,dy_{1}\,dy_{2},
\end{eqnarray*}
where we have used the covariance formula for Hermite polynomials (\ref%
{covhermite}). It is convenient here to view $T_{\mathbb{\ell
}}(x),\beta
_{j}(x)$ as isonormal processes of the form
\begin{eqnarray*}
T_{\mathbb{\ell}}(x) &=&\int_{S^{2}}\sqrt{\frac{2\ell+1}{4\pi
}C_{\mathbb{%
\ell}}}P_{\mathbb{\ell}}
\bigl( \langle x,y \rangle\bigr)\,dW(y),
\\
\beta_{j}(x) &=&\frac{1}{\sigma_{\beta_{j}}}\int_{S^{2}}\sum
_{\mathbb
{%
\ell}}b\biggl(\frac{\ell}{B^{j}}\biggr)\sqrt{
\frac{2\ell+1}{4\pi}C_{\mathbb
{\ell}}}%
P_{\mathbb{\ell}}\bigl( \langle x,y
\rangle\bigr)\,dW(y),
\end{eqnarray*}
where $dW(y)$ denotes a Gaussian white noise measure on the sphere,
whence
\begin{eqnarray*}
&&H_{q}\bigl(\beta_{j}(x)\bigr)
\\
&&\qquad=\frac{1}{\sigma_{\beta_{j}}^{q}}\sum_{\mathbb{\ell}_{1}\cdots \mathbb
{\ell
}_{q}}b\biggl(
\frac{\ell_{1}}{B^{j}}\biggr)\cdots  b\biggl(\frac{\ell_{q}}{B^{j}}\biggr)\sqrt
{%
\prod_{i=1}^{q} \biggl
\{ \frac{2\ell_{i}+1}{4\pi}C_{\mathbb{\ell}%
_{i}} \biggr\} }
\\
&&\hspace*{40pt}\qquad\quad{}\times\int_{ \{ S^{2}\times\cdots \times S^{2} \} ^{\prime
}}P_{%
\mathbb{\ell}_{1}}\bigl( \langle
x,y_{1} \rangle\bigr)\cdots  P_{\mathbb
{\ell}%
_{q}}\bigl( \langle
x,y_{q} \rangle\bigr)\,dW(y_{1})\cdots  dW(y_{q}).
\end{eqnarray*}
Here, the domain of integration excludes the ``diagonals,'' that is,
\[
\bigl\{ S^{2}\times\cdots \times S^{2} \bigr\}
^{\prime}:= \bigl\{ (x_{1},\ldots,x_{q})\in
S^{2}\times\cdots \times S^{2}\dvtx x_{i}\neq
x_{j}\mbox{ for all }i\neq j \bigr\} ,
\]
and we are using the characterization of Hermite polynomials as multiple
Wiener--It\^{o} integrals; see, for instance, Theorem~2.7.7 in \cite
{noupebook}. We
are thus led to
\begin{eqnarray*}
g_{j;q}(z) &=&\frac{1}{\sigma_{\beta_{j}}^{q}}\int_{S^{2}}\sum
_{\ell
}\kappa(\ell)\frac{2\ell+1}{4\pi}P_{\ell}
\bigl( \langle z,x \rangle \bigr)\\
&&\hspace*{31pt}{}\times \sum_{\mathbb{\ell}_{1}\cdots \mathbb{\ell}_{q}}b\biggl(
\frac{\ell_{1}}{B^{j}} 
\biggr)\cdots  b\biggl(\frac{\ell_{q}}{B^{j}}\biggr)\sqrt
{\prod_{i=1}^{q} \biggl\{
\frac{%
2\ell_{i}+1}{4\pi}C_{\mathbb{\ell}_{i}} \biggr\} }
\\
&&\hspace*{31pt}{}\times\int_{S^{2}\times\cdots \times S^{2}}P_{\mathbb{\ell}%
_{1}}\bigl( \langle
x,y_{1} \rangle\bigr)\cdots\\
&&\hspace*{88pt}{}\times  P_{\mathbb{\ell}%
_{q}}\bigl( \langle
x,y_{q} \rangle\bigr)\,dW(y_{1})\cdots  dW(y_{q})\,dx.
\end{eqnarray*}
Using the isometry property of stochastic integrals, it follows easily
that
\begin{eqnarray*}
&&
\mathbb{E} \bigl\{ g_{j;q}(z_{1})g_{j;q}(z_{2})
\bigr\}\\
&&\qquad =
\frac{q!}{\sigma_{\beta_{j}}^{2q}}\int_{S^{2}\times S^{2}}\sum
_{\ell
_{1}\ell_{2}}\frac{2\ell_{1}+1}{4\pi}\kappa(\ell_{1})
\frac{2\ell
_{2}+1%
}{4\pi}\kappa(\ell_{2})P_{\ell_{1}}\bigl( \langle
z_{1},x_{1} \rangle\bigr)P_{\ell_{2}}\bigl( \langle
z_{2},x_{2} \rangle\bigr)
\\
&&\hspace*{50pt}\qquad\quad{}\times\sum_{\mathbb{\ell}_{1}\cdots \mathbb{\ell}_{q}}b^{2}\biggl(
\frac{\ell
_{1}%
}{B^{j}}\biggr)\cdots  b^{2}\biggl(\frac{\ell_{q}}{B^{j}}\biggr)\sqrt
{\prod_{i=1}^{q}%
\biggl\{ \frac{2\ell_{i}+1}{4\pi}C_{\mathbb{\ell}_{i}} \biggr\} }\\
&&\hspace*{50pt}\qquad\quad{}\times P_{\mathbb
{\ell
}_{1}}\bigl(
\langle x_{1},x_{2} \rangle\bigr)\cdots  P_{\mathbb{\ell}%
_{q}}\bigl(
\langle x_{1},x_{2} \rangle\bigr)\,dx_{1}\,dx_{2}.
\end{eqnarray*}
Now write
\begin{eqnarray*}
&&\frac{(2\mathbb{\ell}_{1}+1)\cdots (2\mathbb{\ell}_{q}+1)}{(4\pi)^{q}}P_{
\mathbb{\ell}_{1}}\bigl( \langle x_{1},x_{2}
\rangle\bigr)\cdots  P_{\mathbb
{%
\ell}_{q}}\bigl( \langle x_{1},x_{2}
\rangle\bigr)
\\
&&\qquad=\sum_{m_{1}\cdots  m_{q}}Y_{\mathbb{\ell}_{1}m_{1}}(x_{1})\cdots  Y_{\mathbb
{\ell}%
_{q}m_{q}}(x_{1})
\overline{Y}_{\mathbb{\ell
}_{1}m_{1}}(x_{2})\cdots \overline{Y}%
_{\mathbb{\ell}_{q}m_{q}}(x_{2})
\end{eqnarray*}
so that
\begin{eqnarray*}
&&\frac{(2\mathbb{\ell}_{1}+1)\cdots (2\mathbb{\ell}_{q}+1)}{(4\pi)^{q}}%
\int_{S^{2}\times S^{2}}P_{\ell_{1}}
\bigl( \langle z_{1},x_{1} \rangle \bigr)P_{\ell_{2}}
\bigl( \langle z_{2},x_{2} \rangle\bigr)\\
&&\hspace*{110pt}\qquad{}\times P_{\mathbb{\ell}%
_{1}}
\bigl( \langle x_{1},x_{2} \rangle\bigr)\cdots  P_{\mathbb{\ell}%
_{q}}
\bigl( \langle x_{1},x_{2} \rangle\bigr)\,dx_{1}\,dx_{2}
\\
&&\qquad=\sum_{\mu_{1}\mu_{2}}\sum_{m_{1}\cdots  m_{q}}
\mathcal{G}(\mathbb{\ell} 
_{1},m_{1};\ldots \mathbb{
\ell}_{q},m_{q};\ell_{1},\mu_{1})
\\
&&\hspace*{46pt}\qquad\quad{}\times\mathcal{G}(\mathbb{\ell}_{1},m_{1};
\ldots \mathbb{
\ell}%
_{q},m_{q};\ell_{2},
\mu_{2}) \biggl\{ \frac{4\pi}{2\ell+1}Y_{\ell
_{1}\mu
_{1}}(z_{1})
\overline{Y}_{\ell_{2}\mu_{2}}(z_{2}) \biggr\}
\\
&&\qquad=\frac{4\pi}{2\ell+1}\sum_{\mu_{1}\mu_{2}}Y_{\ell_{1}\mu
_{1}}(z_{1})%
\overline{Y}_{\ell_{2}\mu_{2}}(z_{2})\delta_{\ell_{1}}^{\ell
_{2}}
\delta _{\mu_{1}}^{\mu_{2}}=P_{\ell_{1}}\bigl( \langle
z_{1},z_{2} \rangle\bigr)%
.
\end{eqnarray*}
The general case $q\geq3$ hence yields (see also \cite{marpecbook}, Theorem~7.5 for a related computation)
\begin{eqnarray*}
&&\mathbb{E}g_{j;q}^{2}(x)\\
&&\qquad=
\frac{q!}{\sigma_{\beta_{j}}^{2q}}\sum_{\mathbb{\ell}}\kappa ^{2}(
\mathbb{%
\ell})\sum_{\mathbb{\ell}_{1}\cdots \mathbb{\ell}_{q}}\mathcal{C}(\ell
_{1},\ldots,\ell_{q},\ell)b^{2}\biggl(
\frac{\mathbb{\ell
}_{1}}{B^{j}}\biggr)\cdots  b^{2}\biggl(%
\frac{\mathbb{\ell}_{q}}{B^{j}}
\biggr)\frac{2\mathbb{\ell}_{1}+1}{4\pi
}\cdots\\
&&\hspace*{116pt}{}\times  \frac{%
2\mathbb{\ell}_{q}+1}{4\pi}C_{\mathbb{\ell}_{1}}\cdots  C_{\mathbb{\ell
}_{q}}%
\end{eqnarray*}
and
\begin{eqnarray*}
&&\mathbb{E} \bigl\{ g_{j;q}(x)g_{j;q}(y) \bigr\}
\\
&&\qquad=\frac{q!}{\sigma_{\beta_{j}}^{2q}}\sum_{\mathbb{\ell}}\kappa^{2}(
\mathbb{\ell})\sum_{\mathbb{\ell}_{1}\cdots \mathbb{\ell}_{q}}\mathcal{C}
(\ell_{1},\ldots,\ell_{q},\ell)b^{2}
\biggl(\frac{\mathbb{\ell}_{1}}{B^{j}}%
\biggr)\cdots  b^{2}\biggl(
\frac{\mathbb{\ell}_{q}}{B^{j}}\biggr)
\\
&&\hspace*{84pt}\qquad\quad{}\times\frac{2\mathbb{\ell}_{1}+1}{4\pi}\cdots \frac{2\mathbb{\ell
}_{q}+1}{%
4\pi}C_{\mathbb{\ell}_{1}}\cdots  C_{\mathbb{\ell}_{q}}P_{\mathbb{\ell}%
}
\bigl( \langle x_{1},x_{2} \rangle\bigr),
\end{eqnarray*}
as claimed.
\end{pf}

\begin{remark}
It is immediately checked that the angular power spectrum of
$g_{j;q}(y)$ is
given by [see (\ref{colizzi2})]
%
\begin{eqnarray}\label{porteaperte2}
C_{\mathbb{\ell};j,q}&:=&\frac{q!}{\sigma_{\beta_{j}}^{2q}}\frac{4\pi
}{%
2\ell+1}\kappa^{2}(
\mathbb{\ell})
\nonumber
\\[-8pt]
\\[-8pt]
\nonumber
&&{}\times
\sum_{\mathbb{\ell}_{1}\cdots \mathbb
{\ell}%
_{q}}\mathcal{C}(
\ell_{1},\ldots,\ell_{q},\ell)\prod
_{k=1}^{q} \biggl[ b^{2}\biggl(
\frac{\mathbb{\ell}_{k}}{B^{j}}\biggr)\frac{2\mathbb{\ell
}_{k}+1}{4\pi}C_{%
\mathbb{\ell}_{k}} \biggr].
\end{eqnarray}
As a special case, for $q=2$ we recover the previous result (\ref
{colizzi})
%
\begin{eqnarray}\label{powspe}
C_{\mathbb{\ell};j,2}&=&\frac{2!}{\sigma_{\beta_{j}}^{4}}\kappa^{2}(%
\mathbb{
\ell})\sum_{\mathbb{\ell}_{1}\mathbb{\ell}_{2}}b^{2}\biggl(
\frac{%
\mathbb{\ell}_{1}}{B^{j}}\biggr)b^{2}\biggl(\frac{\mathbb{\ell}_{2}}{B^{j}}\biggr)
\frac{
(2\ell_{1}+1)(2\ell_{2}+1)}{4\pi}\nonumber\\
&&\hspace*{56pt}{}\times C_{\mathbb{\ell}_{1}}C_{\mathbb
{\ell}%
_{2}}\pmatrix{ \mathbb{\ell} & \mathbb{\ell}_{1} & \mathbb{
\ell}_{2}
\vspace*{2pt}\cr
0 & 0 & 0} ^{2}
\nonumber
\\[-8pt]
\\[-8pt]
\nonumber
&=&\frac{2!}{\sigma_{\beta_{j}}^{4}}\kappa^{2}(\mathbb{\ell})\frac
{4\pi}{%
2\ell+1}\sum
_{\mathbb{\ell}_{1}\mathbb{\ell}_{2}}\mathcal{C}(\ell _{1},
\ell_{2},\ell)b^{2}\biggl(\frac{\mathbb{\ell}_{1}}{B^{j}}
\biggr)b^{2}\biggl(\frac{
\mathbb{\ell}_{2}}{B^{j}}\biggr)\\
&&\hspace*{89pt}{}\times \frac{(2\ell_{1}+1)}{4\pi}
\frac{(2\ell
_{2}+1)}{%
4\pi}C_{\mathbb{\ell}_{1}}C_{\mathbb{\ell}_{2}},\nonumber
\end{eqnarray}
because
\[
\mathcal{C}(\ell_{1},\ell_{2},\ell)= \bigl\{
C_{\ell_{1}0\ell
_{2}0}^{\ell0} \bigr\} ^{2}=(2\ell+1)\pmatrix{ \mathbb{\ell} & \mathbb{\ell}_{1} &
\mathbb{\ell}_{2}
\vspace*{2pt}\cr
0 & 0 & 0} ^{2}.
\]
\end{remark}

\section{Weak convergence}
\label{sec:weak:cgs}

In this section, we provide our main convergence results. It must be
stressed that the convergence we study here is in some sense different from
the standard theory as presented, for instance, by \cite{Billingsley}, but
refers instead to the broader notion developed by \cite{davydov,aristotilediaconis}; see also \cite{dudley}, Chapter~11.

We start first from the following conditions (see, e.g., \cite
{bkmpAoS,marpecbook,mayeli}):

\begin{condition}\label{ConditionB} The angular power spectrum has the form
\[
C_{\ell}=G(\ell)\ell^{-\alpha},\qquad\ell=1,2,\ldots,
\]
where $\alpha>2$ and $G\dvtx \mathbb{R}^{+}\mathbb{\rightarrow R}^{+}$ is such
that, for all $u>0$,
\begin{eqnarray*}
0 &<&c_{0}\leq G(\cdot)\leq d_{0},
\\
\biggl\llvert \frac{d^{r}}{du^{r}}G(u)\biggr\rrvert &\leq&c_{r}u^{-r},\qquad
r=1,2,\ldots,M\in\mathbb{N}.
\end{eqnarray*}
\end{condition}

\begin{condition}\label{ConditionA} The Kernel $K(\cdot)$ and the field $
\{
\beta_{j}(\cdot) \} $ are such that, for all $j=1,2,3,\ldots $
\[
\operatorname{Var} \biggl\{ \int_{S^{2}}K\bigl( \langle x,y \rangle
\bigr)H_{q}\bigl(\widetilde {%
\beta}_{j}(y)
\bigr)\,dy \biggr\} =\sigma_{j}^{2}B^{-2j}\qquad\mbox{for all }j=1,2,\ldots
\]
and there exist positive constants $c_{1},c_{2}$ such that $c_{1}\leq
\sigma
_{j}^{2}\leq c_{2}$ (note that the right-hand side does not depend on
$x$ by
isotropy).
\end{condition}

These assumptions are mild and it is easy to find many physical examples
such that they are fulfilled. In particular, Condition \ref{ConditionB} is
fulfilled when $G(\ell)=P(\ell)/Q(\ell)$ and $P(\ell),Q(\ell)>0$ are
two positive polynomials of the same order. In the now dominant Bardeen's
potential model for the angular power spectrum of the cosmic microwave
background radiation (which is theoretically justified by the so-called
inflationary paradigm for the Big Bang Dynamics; see, e.g., \cite
{Durrer,dode2004}) one has $C_{\ell}\sim(\ell(\ell+1))^{-1}$ for the
observationally relevant range $\ell\leq5\times10^{3}$ (the decay becomes
faster at higher multipoles, in view of the so-called Silk damping effect,
but these multipoles are far beyond observational capacity). This is clearly
in good agreement with Condition \ref{ConditionB}. On the other hand,
assuming that Condition~\ref{ConditionB} holds and taking, for instance, $
K( \langle x,y \rangle)\equiv1$ [e.g., focusing on the integral
of the field $ \{ H_{q}(\widetilde{\beta}_{j}(y))\}$],
Condition \ref{ConditionA} has been shown to be satisfied by \cite{cammar}.
Indeed, it is readily checked that $ \{ H_{q}(\widetilde{\beta}%
_{j}(y)) \} $ is a polynomial of finite order (the integer part of
$%
B^{q(j+1)}$), and we can hence consider the following heuristic
argument: we
have
\begin{eqnarray*}
\int_{S^{2}}K\bigl( \langle x,y \rangle\bigr)H_{q}
\bigl(\widetilde{\beta}%
_{j}(y)\bigr)\,dy &=&\int
_{S^{2}}H_{q}\bigl(\widetilde{\beta}_{j}(y)
\bigr)\,dy
\\
&=&\sum_{k\in\mathcal{X}_{j}}H_{q}\bigl(\widetilde{
\beta}_{j}(\xi _{jk})\bigr)\lambda_{jk},
\end{eqnarray*}
where $ \{ \xi_{jk},\lambda_{jk} \} $ are a set of cubature
points and weights (see \cite{npw1,bkmpBer}); indeed, because
the $%
\beta_{j}(\cdot)$ are band-limited (polynomial) functions, this Riemann sum
approximations can be constructed to be exact (by the so-called cubature
formula established in \cite{npw1}; see also \cite{bkmpBer} for some
discussion), with weights $\lambda_{jk}$ of order $\simeq B^{-2j}$. It is
now known that under Condition \ref{ConditionB}, it is possible to establish
a fundamental decorrelation inequality which will play a crucial role
in our
proof below (see also \cite{bkmpAoS,spalan,mayeli}). Indeed,
exploiting (\ref{covhermite}) and (\ref{localineq}) we have that for
any $%
M\in\mathbb{N}$, there exists a constant $C_{M}$ such that
\[
\operatorname{Cov} \bigl\{ H_{q}\bigl(\widetilde{\beta}_{j}(
\xi_{jk_{1}})\bigr),H_{q}\bigl(\widetilde {%
\beta}_{j}(\xi_{jk_{2}})\bigr) \bigr\} \leq\frac{C_{M}q!}{ \{
1+B^{j}\,d(\xi
_{jk_{1}},\xi_{jk_{2}}) \} ^{qM}},
\]
entailing that the terms $H_{q}(\beta_{j}(\xi_{jk}))$ can be treated as
asymptotically uncorrelated, for large $j$. Hence, heuristically
\begin{eqnarray*}
\operatorname{Var} \biggl\{ \sum_{k\in\mathcal{X}_{j}}H_{q}\bigl(
\widetilde{\beta}_{j}(\xi _{jk})\bigr)\lambda_{jk}
\biggr\} &\simeq&\sum_{k\in\mathcal
{X}_{j}}\operatorname{Var} \bigl\{
H_{q}\bigl(\widetilde{\beta}_{j}(\xi_{jk})\bigr)
\bigr\} \lambda_{jk}^{2}
\\
&\simeq&C_{q}\sum_{k\in\mathcal{X}_{j}}
\lambda_{jk}^{2}\simeq C_{q}B^{-2j},
\end{eqnarray*}
because $\sum_{k\in\mathcal{X}_{j}}\lambda_{jk}=4\pi$.

\begin{example}
For $q=2$, we obtain
\begin{eqnarray*}
&&\operatorname{Var} \biggl\{ \int_{S^{2}}\bigl(\widetilde{
\beta}_{j}^{2}(y)-1\bigr)\,dy \biggr\} \\
&&\qquad=\operatorname{Var} \biggl\{ \int
_{S^{2}}\widetilde{\beta}_{j}^{2}(y)\,dy
\biggr\}\\
&&\qquad=\frac{1}{ \{ \sum_{\mathbb{\ell}}b^{2}({\mathbb{\ell
}}/{B^{j}}%
)(2\mathbb{\ell}+1)C_{\mathbb{\ell}} \} ^{2}}\operatorname{Var}
 \biggl\{ \int_{S^{2}}%
 \biggl[ \sum_{\ell m}b\biggl(\frac{\ell}{B^{j}}
\biggr)a_{\ell m}Y_{\ell m}(y) \biggr] ^{2}\,dy \biggr\}
\\
&&\qquad=\frac{1}{ \{ \sum_{\mathbb{\ell}}b^{2}({\mathbb{\ell
}}/{B^{j}}%
)(2\mathbb{\ell}+1)C_{\mathbb{\ell}} \} ^{2}}\operatorname{Var} \Biggl\{ \sum_{\ell
}b^{2}
\biggl(\frac{\ell}{B^{j}}\biggr)\sum_{m=-\ell}^{\ell}
\llvert a_{\ell
m}\rrvert ^{2} \Biggr\} ,
\end{eqnarray*}
where we have used \eqref{norcia}, \eqref{ptrf} and the ortho-normality
properties of spherical harmonics, that is,
\[
\int_{S^{2}}Y_{\ell m}(y)\overline{Y}_{\ell^{\prime}m^{\prime
}}(y)\,dy=
\delta_{\ell}^{\ell^{\prime}}\delta_{m}^{m^{\prime}}.
\]
Now write
\[
\widehat{C}_{\mathbb{\ell}}:=\frac{1}{2\ell+1}\sum_{m=-\ell}^{\ell
}
\llvert a_{\ell m}\rrvert ^{2},
\]
the so-called sample angular power spectrum; it is readily verified
that $%
\widehat{C}_{\mathbb{\ell}}/C_{\ell}$ obeys a chi-square law with
$(2\ell
+1)$ degrees of freedom, whence we obtain
\begin{eqnarray*}
&&\operatorname{Var} \biggl\{ \int_{S^{2}}\widetilde{\beta}_{j}^{2}(y)\,dy
\biggr\}
\\
&&\qquad=\frac{\operatorname{Var} \{ \sum_{\mathbb{\ell}}b^{2}({\mathbb{\ell
}}/{B^{j}}%
)(2\mathbb{\ell}+1)\widehat{C}_{\mathbb{\ell}} \} }{ \{ \sum_{%
\mathbb{\ell}}b^{2}({\mathbb{\ell}}/{B^{j}})(2\mathbb{\ell}+1)C_{
\mathbb{\ell}} \} ^{2}}=\frac{\sum_{\mathbb{\ell}}b^{4}(
{\mathbb{%
\ell}}/{B^{j}})(2\mathbb{\ell}+1)^{2}\operatorname{Var}(\widehat{C}_{\mathbb{\ell
}})}{%
 \{ \sum_{\mathbb{\ell}}b^{2}({\mathbb{\ell
}}/{B^{j}})(2\mathbb{%
\ell}+1)C_{\mathbb{\ell}} \} ^{2}}
\\
&&\qquad=\frac{2\sum_{\mathbb{\ell}=B^{j-1}}^{B^{j+1}}b^{4}({\mathbb
{\ell}}/{%
B^{j}})(2\mathbb{\ell}+1)C_{\mathbb{\ell}}^{2}}{ \{ \sum_{\mathbb
{\ell
}}b^{2}({\mathbb{\ell}}/{B^{j}})(2\mathbb{\ell}+1)C_{\mathbb{\ell
}%
} \} ^{2}}\simeq\frac{B^{j(2-2\alpha)}}{ \{ B^{j(2-\alpha
)} \} ^{2}}\simeq B^{-2j},
\end{eqnarray*}
as claimed.
\end{example}

\subsection{Finite-dimensional distributions}
\label{fdd}

The general technique we shall exploit to establish the central limit
theorem is based upon sharp bounds on normalized fourth-order cumulants.
Note that, in view of results from \cite{nourdinpeccati}, this will actually
entail a stronger form of convergence, more precisely in total variation
norm (see~\cite{nourdinpeccati}).

We start by recalling that the field $ \{ \widetilde{\beta}%
_{j}(\cdot) \} $ can be expressed in terms of the isonormal Gaussian
process, for example, as a stochastic integral
\begin{eqnarray*}
\widetilde{\beta}_{j}(y)&:=&\frac{1}{\sigma_{\beta_{j}}}\sum
_{\mathbb
{\ell}%
}b\biggl(\frac{\mathbb{\ell}}{B^{j}}\biggr)T_{\mathbb{\ell}}(y)\\
&=&
\frac{1}{\sigma
_{\beta_{j}}}\sum_{\mathbb{\ell}}b\biggl(\frac{\mathbb{\ell}}{B^{j}}
\biggr)\sqrt{ 
\frac{(2\mathbb{\ell}+1)C_{\ell}}{4\pi}}\int_{S^{2}}P_{\mathbb{\ell}
}
\bigl( \langle y,z \rangle\bigr)W(dz),
\end{eqnarray*}
where $W(A)$ is a white noise Gaussian measure on the sphere, which
satisfies
\[
\mathbb{E}W(A)=0, \qquad\mathbb{E} \bigl\{ W(A)W(B) \bigr\} =\int_{A\cap B}\,dz\qquad
\mbox{for all }A,B\in\mathcal{B}\bigl(S^{2}\bigr).
\]
It thus follows immediately that the transformed process $ \{ H_{q}(
\widetilde{\beta}_{j}(\cdot)) \} $ belongs to the $q$th order Wiener
chaos; see \cite{nourdinpeccati,noupebook} for more discussion and
detailed definitions. Let us now recall the definition of the \emph{total
variation} distance between the laws of two random variables $X$ and $Z$,
which is given by
\[
d_\mathrm{TV}(X,Z)=\sup_{A\in\mathcal{B}(\mathbb{R})}\bigl\llvert \Pr(W\in A)-\Pr
(X\in A)\bigr\rrvert .
\]
When $Z$ is a standard Gaussian and $X$ is a zero-mean, {unit variance}
random variable which belongs to the $q$th order Wiener chaos of a Gaussian
measure, the following remarkable inequality holds for the total variation
distance
\[
d_\mathrm{TV}(X,Z)\leq\sqrt{\frac{q-1}{3q}\mathrm{cum}_{4}(X)};
\]
see again \cite{nourdinpeccati,noupebook} for more discussion
and a
full proof.

From now on, we shall normalize the fields $ \{ g_{j;q} \} $ to
make them unit variance, that is, we shall define
\[
\widetilde{g}_{j;q}(x):=\frac{g_{j;q}(x)}{\sqrt{\mathbb
{E}g_{j;q}^{2}(x)}}%
;
\]
also, we introduce an isotropic zero-mean Gaussian process $f_{j;q}$, with
the same covariance function as that of $\widetilde{g}_{j;q}$. Our next
result will establish the asymptotic convergence of the finite-dimensional
distributions for $\widetilde{g}_{j;q}$ and $f_{j;q}$. In particular, we
have the following.

\begin{lemma}
For any fixed vector $(x_{1},\ldots,x_{p})$ in $S^{2}$, we have that
\[
d_\mathrm{TV} \bigl( \bigl( \widetilde{g}_{j;q}(x_{1}),
\ldots,\widetilde{g}%
_{j;q}(x_{p}) \bigr) , \bigl(
f_{j;q}(x_{1}),\ldots,f_{j;q}(x_{p})
\bigr) \bigr) =o(1),
\]
as $j\rightarrow\infty$.
\end{lemma}

\begin{pf}
For notational simplicity, we shall focus on the univariate case. In this
case, the Nourdin--Peccati inequality \cite{nourdinpeccati,noupebook} can be restated as
%
\begin{equation}
d_\mathrm{TV} \biggl( \frac{g_{j;q}(x)}{\sqrt{\mathbb{E}g_{j;q}^{2}(x)}}%
,N(0,1) \biggr) \leq
\sqrt{\frac{q-1}{3q}\mathrm{cum}_{4}\biggl(\frac{g_{j;q}(x)}{\sqrt{
\mathbb{E}g_{j;q}^{2}(x)}}
\biggr)}. \label{baglioni}
\end{equation}
In view of (\ref{baglioni}), for the central limit theorem to hold we shall
only need to study the limiting behaviour of the normalized fourth-order
cumulant of $g_{j;q}$. Let us then consider
\begin{eqnarray*}
&&\mathrm{cum}_{4} \bigl\{ g_{j;q}(x) \bigr\}\\
&&\qquad =
\int_{ \{ S^{2} \} ^{\otimes4}}K\bigl( \langle x,y_{1} \rangle
\bigr)\cdots  K\bigl( \langle x,y_{4} \rangle\bigr)\\
&&\hspace*{62pt}{}\times \mathrm{cum}_{4} \bigl
\{ H_{q}\bigl(\widetilde {%
\beta}_{j}(y_{1})
\bigr),\ldots,H_{q}\bigl(\widetilde{\beta}_{j}(y_{4})
\bigr) \bigr\} \,dy_{1}\cdots  dy_{4}.
\end{eqnarray*}
We now need to provide a bound on the cumulant inside the integral; to this
aim, we need to recall the \emph{diagram formula} (see, e.g.,
\cite%
{pectaq}, Chapter~7 or \cite{marpecbook}, Proposition~4.15 for further
details). In particular, fix a set of integers $\alpha_{1},\ldots
,\alpha
_{p}$; a~\emph{diagram} is a graph with $\alpha_{1}$ vertices labeled
by $%
1 $, $\alpha_{2}$ vertices labeled by $2, \ldots, \alpha_{p}$ vertices
labeled by $p$, such that each vertex has degree $1$. We can view the
vertices as belonging to $p$ different rows; the edges may connect only
vertices with different labels, that is, there are no
(``flat'')
edges connecting two vertices on the same row. The set of such diagrams that
are connected (i.e., such that it is not possible to partition the rows into
two subsets $A$ and $B$ such that no edge connect a vertex in $A$ with a
vertex in $B$) is denoted by $\Gamma_{c}(\alpha_{1},\ldots,\alpha_{p})$.
Given a diagram $\gamma\in\Gamma_{c}$, $\eta_{ik}(\gamma)$ is the
number of edges between the vertices labeled by $i$ and the vertices
labeled by $k$ in $\gamma$. The \emph{diagram formula} for Hermite
polynomials states the following; let $(Z_{1},\ldots,Z_{p})$ be a centered
Gaussian vector whose components have unit variance, and let $%
H_{l_{1}},\ldots,H_{l_{p}}$ be Hermite polynomials of degrees
$l_{1},\ldots
,l_{p}\ (\geq1)$, respectively. Then
\[
{\mathrm{cum}}\bigl(H_{l_{1}}(Z_{1}),\ldots,H_{l_{p}}(Z_{p})
\bigr)=\sum_{\gamma\in\Gamma
_{c}(l_{1},\ldots,l_{p})}\prod_{1\leq i\leq j\leq p}
\bigl\{\mathbb{E}%
[Z_{i}Z_{j}]\bigr
\}^{\eta_{ij}(\gamma)}.
\]
For a proof, see \cite{pectaq}, Section~7.3. A simple application in
our case
then yields
\begin{eqnarray}\label{chiarati}
&&\mathrm{cum}_{4} \bigl\{ H_{q}\bigl(\widetilde{
\beta}_{j}(y_{1})\bigr),\ldots,H_{q}\bigl(
\widetilde {%
\beta}_{j}(y_{4})\bigr) \bigr\}\nonumber
\\
&&\qquad=\sum_{\gamma\in\Gamma_{c}(q,q,q,q)}\prod_{1\leq s\leq t\leq4}
\bigl\{ \mathbb{E%
}\bigl[\widetilde{\beta}_{j}(y_{s})
\widetilde{\beta}_{j}(y_{t})\bigr]\bigr\}^{\eta
_{{st}}(\gamma)}
\nonumber
\\[-8pt]
\\[-8pt]
\nonumber
&&\qquad\leq\sum_{\gamma\in\Gamma_{c}(q,q,q,q)}\bigl\llvert \rho
_{j}(y_{1},y_{2})\bigr\rrvert ^{\eta_{12}(\gamma)}
\bigl\llvert \rho _{j}(y_{2},y_{3})\bigr\rrvert
^{\eta_{23}(\gamma)}\bigl\llvert \rho _{j}(y_{3},y_{4})
\bigr\rrvert ^{\eta_{34}(\gamma)}
\\
&&\hspace*{51pt}\qquad\quad{}\times\bigl\llvert \rho_{j}(y_{4},y_{1})\bigr
\rrvert ^{\eta
_{41}(\gamma
)}\bigl\llvert \rho_{j}(y_{1},y_{3})
\bigr\rrvert ^{\eta_{13}(\gamma
)}\bigl\llvert \rho_{j}(y_{2},y_{4})
\bigr\rrvert ^{\eta_{24}(\gamma)}, \nonumber
\end{eqnarray}
where
\[
\rho_{j}(y_{1},y_{2})=\frac{\sum_{\mathbb{\ell}}b^{2}({\mathbb
{\ell}%
}/{B^{j}})({(2\mathbb{\ell}+1)}/{(4\pi)})C_{\mathbb{\ell}}P_{\mathbb
{\ell}%
}(y_{1},y_{2})}{\sum_{\mathbb{\ell}}b^{2}({\mathbb{\ell}}/{B^{j}})
({(2\mathbb{\ell}+1)}/{(4\pi)})C_{\mathbb{\ell}}}\leq
\frac
{C_{M}}{ \{
1+B^{j}d(y_{1},y_{2}) \} ^{M}},
\]
in view of (\ref{ConditionB}) and the decorrelation inequality provided by
\cite{bkmpAoS}; see also \cite{spalan,mayeli}. Note that in our
circumstances, the total number of ``edges'' satisfies
\[
\sum_{t=1}^{4}\eta_{{st}}(
\gamma)=q\qquad \mbox{for all }s=1,\ldots,4\quad\mbox{and}\quad 
\sum
_{1<s<t\leq4}\eta_{{st}}(\gamma)=2q.
\]
It is simple to see that for any $\gamma\in\Gamma_{c}(q,q,q,q)$ and any
given $s$, there must exist two distinct indexes $t,t^{\prime}$ such
that $%
\eta_{{st}}(\gamma),\eta_{st^{\prime}}(\gamma)>0$. Indeed, assume by
contradiction that this is not the case for some $s$; then there must
exist $%
t\neq s$ such that such that $\eta_{{st}}(\gamma)=q$, and hence $\eta
_{s^{\prime}t}(\gamma)=0$ for all $s\neq s^{\prime}$. It follows
that $%
\gamma$ cannot be connected, yielding the desired contradiction.
Hence, up
to a relabeling of the indices there must necessarily exist a
``spanning cycle,'' that is, a sequence
\[
\eta_{12}(\gamma),\eta_{23}(\gamma),\eta_{34}(
\gamma),\eta _{41}(\gamma)>0,
\]
where the inequality is strict. Since the correlations are bounded by unity,
it follows that
\begin{eqnarray*}
&&\bigl\llvert \rho_{j}(y_{1},y_{2})\bigr\rrvert
^{\eta_{12}(\gamma
)}\bigl\llvert \rho_{j}(y_{2},y_{3})
\bigr\rrvert ^{\eta_{23}(\gamma
)}\bigl\llvert \rho_{j}(y_{3},y_{4})
\bigr\rrvert ^{\eta_{34}(\gamma)}
\\
&&\quad{}\times\bigl\llvert \rho_{j}(y_{4},y_{1})\bigr
\rrvert ^{\eta
_{41}(\gamma
)}\bigl\llvert \rho_{j}(y_{1},y_{3})
\bigr\rrvert ^{\eta_{13}(\gamma
)}\bigl\llvert \rho_{j}(y_{2},y_{4})
\bigr\rrvert ^{\eta_{24}(\gamma)}
\\
&&\qquad\leq\bigl\llvert \rho_{j}(y_{1},y_{2})\bigr
\rrvert \bigl\llvert \rho _{j}(y_{2},y_{3})\bigr
\rrvert \bigl\llvert \rho_{j}(y_{3},y_{4})\bigr
\rrvert \bigl\llvert \rho_{j}(y_{4},y_{1})\bigr
\rrvert .
\end{eqnarray*}
Therefore, writing $C(q)$ as the cardinality of set $\Gamma_{c}(q,q,q,q)$,
which is the set of all connected graphs of a given order, we get
\begin{eqnarray*}
&&\mathrm{cum}_{4} \bigl\{ H_{q}\bigl(\widetilde{\beta
}_{j}(y_{1})\bigr),\ldots,H_{q}\bigl(
\widetilde{%
\beta}_{j}(y_{4})\bigr) \bigr\}
\\
&&\qquad\leq\sharp \bigl\{ \Gamma_{c}(q,q,q,q) \bigr\} \times\bigl\llvert
\rho _{j}(y_{1},y_{2})\bigr\rrvert \bigl\llvert
\rho_{j}(y_{2},y_{3})\bigr\rrvert \bigl\llvert
\rho_{j}(y_{3},y_{4})\bigr\rrvert \bigl\llvert
\rho _{j}(y_{4},y_{1})\bigr\rrvert
\\
&&\qquad=C(q)\times\bigl\llvert \rho_{j}(y_{1},y_{2})
\bigr\rrvert \bigl\llvert \rho _{j}(y_{2},y_{3})
\bigr\rrvert \bigl\llvert \rho_{j}(y_{3},y_{4})
\bigr\rrvert \bigl\llvert \rho_{j}(y_{4},y_{1})
\bigr\rrvert ,
\end{eqnarray*}
%
Thus, we have
\begin{eqnarray*}
\mathrm{cum}_{4}\bigl\{g_{j;q}(x)\bigr\} &\leq&C(q)\int
_{ \{ S^{2} \} ^{\otimes
4}}\bigl\llvert K\bigl( \langle x,y_{1} \rangle
\bigr)\cdots  K\bigl( \langle x,y_{4} \rangle\bigr)\bigr\rrvert \bigl\llvert
\rho _{j}(y_{1},y_{2})\bigr\rrvert
\\
&&\hspace*{54pt}{}\times\bigl\llvert \rho_{j}(y_{2},y_{3})\bigr
\rrvert \bigl\llvert \rho _{j}(y_{3},y_{4})\bigr
\rrvert \bigl\llvert \rho_{j}(y_{4},y_{1})\bigr
\rrvert\, dy_{1}\cdots dy_{4}.
\end{eqnarray*}
Now standard computations yield
\begin{eqnarray*}
\int_{S^{2}}\bigl\llvert \rho(y_{1},y_{2})
\bigr\rrvert\, dy_{2}&\leq&\int_{S^{2}}%
\frac{C_{M}}{ \{ 1+B^{j}d(y_{1},y_{2}) \} ^{M}}\,dy_{2}
\\
&\leq&\int_{y_{2}\dvtx d(y_{1},y_{2})\leq B^{-j}}\frac{C_{M}}{ \{
1+B^{j}d(y_{1},y_{2}) \} ^{M}}\,dy_{2}\\
&&{}+\int
_{y_{2}\dvtx d(y_{1},y_{2})\geq
B^{-j}}\frac{C_{M}}{ \{ 1+B^{j}d(y_{1},y_{2}) \}
^{M}}\,dy_{2}\\
&\leq&
CB^{-2j}.
\end{eqnarray*}
Hence,
\begin{eqnarray*}
&&\int_{ \{ S^{2} \} ^{\otimes4}}\bigl\llvert \rho (y_{1},y_{2})
\bigr\rrvert \bigl\llvert \rho(y_{2},y_{3})\bigr\rrvert \bigl
\llvert \rho(y_{3},y_{4})\bigr\rrvert \bigl\llvert
\rho(y_{4},y_{1})\bigr\rrvert\, dy_{1}\cdots dy_{4}
\\
&&\qquad\leq\int_{ \{ S^{2} \} ^{\otimes4}}\bigl\llvert \rho (y_{1},y_{2})
\bigr\rrvert \bigl\llvert \rho(y_{2},y_{3})\bigr\rrvert \bigl
\llvert \rho(y_{3},y_{4})\bigr\rrvert\, dy_{1}\cdots dy_{4}
\leq CB^{-6j}
\end{eqnarray*}
and
\[
\mathrm{cum}_{4} \bigl\{ \widetilde{g}_{j;q}(x) \bigr\} =O
\bigl(B^{-2j}\bigr),
\]
entailing that for every fixed $x\in S^{2}$,
\[
d_\mathrm{TV} \bigl( \widetilde{g}_{j;q}(x),N(0,1) \bigr) =O
\bigl(B^{-2j}\bigr),
\]
and hence the univariate central limit theorem, as claimed. The proof
in the
multivariate case is analogous, and hence omitted for the sake of brevity.
\end{pf}

\subsection{Tightness}
\label{tightness}

We now focus on asymptotic tightness for both sequences $ \{
g_{j;q} \} $ and $ \{ f_{j;q} \}$. We shall exploit the
following criterion from \cite{Kallenberg}.

\begin{proposition}
[(\cite{Kallenberg})] Let $g_{j}\dvtx M\rightarrow D$ be a sequence of stochastic
processes, where $M$ is compact and $D$ is complete and separable. Assume
that the finite-dimensional distributions of $g_j$ converge to the
those of $%
g$, and that (tightness)
\[
\lim_{h\rightarrow0}\lim\sup_{j\rightarrow\infty}
\mathbb{E}%
\Bigl(\sup_{d(x,y)\leq h}\bigl\llvert
g_{j}(x)-g_{j}(y)\bigr\rrvert \wedge 1\Bigr)=0.
\]
Then $g_{j}\Rightarrow g$.
\end{proposition}

We are hence able to establish the following.

\begin{lemma}
For every $q\in\mathbb{N}$, the sequences $ \{ \widetilde{g}%
_{j;q} \} $ and $ \{ f_{j;q} \} $ are tight.
\end{lemma}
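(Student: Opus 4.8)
The plan is to apply the Kallenberg criterion stated in the Proposition above: since $S^{2}$ is compact, $C(S^{2})$ is complete and separable, and convergence of the finite-dimensional distributions has just been established, it remains only to verify the equicontinuity condition for each sequence. Throughout I work with the unit-variance normalizations $\tilde{g}_{j;q}:=g_{j;q}/\sqrt{Var(g_{j;q})}$ and $\tilde{f}_{j;q}:=f_{j;q}/\sqrt{Var(g_{j;q})}$, consistent with the preceding Lemma. The decisive structural observation is that, because the kernel $K$ has the \emph{finite} expansion (\ref{kernexp}) truncated at the fixed bandwidth $L_{K}$, the factor $\kappa^{2}(\ell)$ in the angular power spectrum (\ref{porteaperte2}) forces $C_{\ell;j,q}=0$ for $\ell>L_{K}$. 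Hence both $g_{j;q}$ and $f_{j;q}$ are \emph{band-limited} to the fixed degree $L_{K}$, independently of $j$, with normalized spectra $\tilde{C}_{\ell;j,q}:=C_{\ell;j,q}/Var(g_{j;q})$ supported on $\ell\le L_{K}$ and satisfying $\sum_{\ell}\frac{2\ell+1}{4\pi}\tilde{C}_{\ell;j,q}=1$. This fixed bandwidth is exactly what will render every estimate uniform in $j$.

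First I would bound the second moment of the increments. Using the covariance computed in the preceding Lemmas, equivalently the spectrum (\ref{porteaperte2}), together with the normalization, one has for all $x,y\in S^{2}$
\begin{equation*}
\mathbb{E}\left\vert \tilde{g}_{j;q}(x)-\tilde{g}_{j;q}(y)\right\vert ^{2}=2\sum_{\ell=0}^{L_{K}}\frac{2\ell+1}{4\pi}\tilde{C}_{\ell;j,q}\left(1-P_{\ell}(\left\langle x,y\right\rangle)\right),
\end{equation*}
and the identical identity holds for $\tilde{f}_{j;q}$, which shares the same covariance. The elementary Legendre estimate $1-P_{\ell}(\cos\theta)\le \frac{\ell(\ell+1)}{2}\theta^{2}$ combined with the fixed bound $\ell\le L_{K}$ then yields
\begin{equation*}
\mathbb{E}\left\vert \tilde{g}_{j;q}(x)-\tilde{g}_{j;q}(y)\right\vert ^{2}\le C_{L_{K}}\,d(x,y)^{2},
\end{equation*}
a bound uniform in $j$ precisely because the normalized second spectral moment $\sum_{\ell}\frac{2\ell+1}{4\pi}\tilde{C}_{\ell;j,q}\frac{\ell(\ell+1)}{2}$ cannot exceed $\frac{L_{K}(L_{K}+1)}{2}$.

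The next step is to upgrade this to a higher-order moment bound so that a Kolmogorov-type continuity estimate applies on the two-dimensional manifold $S^{2}$. For the Gaussian field $\tilde{f}_{j;q}$ this is immediate, since Gaussian moment identities give $\mathbb{E}\vert \tilde{f}_{j;q}(x)-\tilde{f}_{j;q}(y)\vert^{2k}=c_{k}\big(\mathbb{E}\vert \tilde{f}_{j;q}(x)-\tilde{f}_{j;q}(y)\vert^{2}\big)^{k}\le C\,d(x,y)^{2k}$. For the non-Gaussian field the key tool is hypercontractivity: the increment $\tilde{g}_{j;q}(x)-\tilde{g}_{j;q}(y)$ remains in the $q$-th order Wiener chaos, a linear space, so by the equivalence of $L^{p}$ norms on a fixed chaos (see \cite{nourdinpeccati}, \cite{noupebook}) one obtains, with a constant depending only on $k$ and $q$,
\begin{equation*}
\mathbb{E}\left\vert \tilde{g}_{j;q}(x)-\tilde{g}_{j;q}(y)\right\vert ^{2k}\le C_{k,q}\left(\mathbb{E}\left\vert \tilde{g}_{j;q}(x)-\tilde{g}_{j;q}(y)\right\vert ^{2}\right)^{k}\le C\,d(x,y)^{2k}.
\end{equation*}
Choosing $k\ge 2$ so that $2k>\dim S^{2}=2$, a Garsia--Rodemich--Rumsey / Kolmogorov--Chentsov chaining argument on $S^{2}$ produces a modulus bound $\mathbb{E}\big(\sup_{d(x,y)\le h}\vert \tilde{g}_{j;q}(x)-\tilde{g}_{j;q}(y)\vert\wedge 1\big)\le C\,h^{\gamma}$ with $\gamma=1-1/k>0$, \emph{uniformly in} $j$ (and analogously for $\tilde{f}_{j;q}$); taking first $\limsup_{j\to\infty}$ and then $h\to 0$ verifies the Kallenberg tightness condition for both sequences.

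I expect the main obstacle to be the non-Gaussian higher-moment estimate, and more precisely securing its uniformity in $j$. Hypercontractivity cleanly reduces this to the second-moment computation, but that reduction is useful only because the fixed bandwidth $L_{K}$ keeps the second spectral moment bounded; were $K$ not band-limited, the spectrum $C_{\ell;j,q}$ would concentrate near frequencies of order $B^{j}$, the normalized second spectral moment would diverge with $j$, and no uniform increment bound would survive. Thus the mild smoothness hypothesis on $K$ in (\ref{kernexp}) is exactly what makes the tightness step work, and the remaining chaining estimates are entirely standard once the uniform moment bounds are in hand.
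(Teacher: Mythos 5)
Your proof is correct, but it takes a genuinely different route from the paper's. Both arguments pivot on the same structural fact, which you identify explicitly: the finite expansion (\ref{kernexp}) of $K$ up to the fixed degree $L_{K}$ makes $g_{j;q}$ and $f_{j;q}$ band-limited uniformly in $j$, and this is the sole source of uniformity in the tightness estimates. Where you diverge is in how that fact is exploited. The paper expands $f_{j;q}=\sum_{\ell m}a_{\ell m}(f_{j;q})Y_{\ell m}$, pulls the supremum inside the (finite) sum, and bounds each term via $\mathbb{E}\vert a_{\ell m}\vert\le\sqrt{C_{\ell}(f_{j;q})}$ together with the elementary Lipschitz estimate $\sup_{d(x_{1},x_{2})\le\delta}\vert Y_{\ell m}(x_{1})-Y_{\ell m}(x_{2})\vert\le c\ell^{2}\delta$; this yields $\mathbb{E}\sup_{d\le\delta}\vert f_{j;q}(x_{1})-f_{j;q}(x_{2})\vert\le CL_{K}^{3}\delta$ in one line, with no chaining and no moments beyond the second, and the identical argument covers the non-Gaussian $g_{j;q}$ since only Cauchy--Schwarz on the coefficients is used. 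You instead compute the $L^{2}$ increment from the angular power spectrum, upgrade it to $L^{2k}$ via Gaussian moment identities (for $f_{j;q}$) and hypercontractivity on the $q$-th Wiener chaos (for $g_{j;q}$), and then run a Kolmogorov--Chentsov/GRR chaining argument. Your route is heavier but more systematic: it isolates precisely which quantity must stay bounded (the normalized second spectral moment), and your closing remark about what fails without band-limitedness is a genuine insight the paper does not articulate. The only nitpick is that Kolmogorov--Chentsov delivers H\"older exponents strictly below $1-1/k$, not equal to it, which is immaterial here since any positive modulus exponent suffices for the Kallenberg criterion.
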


\begin{pf}
Write $ \{ a_{\mathbb{\ell}m}(f_{j;q}) \} $ for the spherical
harmonic coefficients of the fields $ \{ f_{j;q} \} $. For any
$%
x_{1},x_{2}\in S^{2}$, we have
\begin{eqnarray*}
&&\mathbb{E} \Bigl\{ \sup_{d(x_{1},x_{2})\leq\delta}\bigl\llvert
f_{j;q}(x_{1})-f_{j;q}(x_{2})\bigr
\rrvert \Bigr\} \\
&&\qquad=\mathbb{E} \biggl\{ \sup_{d(x_{1},x_{2})\leq\delta}\biggl\llvert
\sum_{\mathbb{\ell
}m}a_{\mathbb{%
\ell}m}(f_{j;q}) \bigl
\{ Y_{\mathbb{\ell}m}(x_{1})-Y_{\mathbb{\ell}%
m}(x_{2}) \bigr\}
\biggr\rrvert \biggr\}
\\
&&\qquad\leq\sum_{\mathbb{\ell}m} \bigl\{ \mathbb{E}\bigl\llvert
a_{\mathbb
{\ell}%
m}(f_{j;q})\bigr\rrvert \bigr\} \Bigl\{ \sup
_{d(x_{1},x_{2})\leq\delta
}\bigl\llvert \bigl\{ Y_{\mathbb{\ell}m}(x_{1})-Y_{\mathbb{\ell}%
m}(x_{2})
\bigr\} \bigr\rrvert \Bigr\}.
\end{eqnarray*}
Now
\[
\sup_{d(x_{1},x_{2})\leq\delta}\bigl\llvert \bigl\{ Y_{\mathbb{\ell}%
m}(x_{1})-Y_{\mathbb{\ell}m}(x_{2})
\bigr\} \bigr\rrvert \leq c\ell ^{2}\delta
\]
and
\[
\sum_{\mathbb{\ell}m} \bigl\{ \mathbb{E}\bigl\llvert
a_{\mathbb{\ell}%
m}(f_{j;q})\bigr\rrvert \bigr\} \leq\sum
_{\mathbb{\ell}m}\sqrt{ \bigl\{ \mathbb{E}\bigl\llvert
a_{\mathbb{\ell}m}(f_{j;q})\bigr\rrvert ^{2} \bigr\}
}%
=\sum_{\mathbb{\ell}}(2\mathbb{\ell}+1)
\sqrt{C_{\mathbb{\ell}}(f_{j;q})}
\]
and because $K(\cdot)$ is compactly supported in harmonic space (and hence,
again, a~finite-order polynomial)
\[
\leq \Biggl\{ \sum_{\mathbb{\ell}}^{L_{K}}(2\mathbb{
\ell}+1) \Biggr\} ^{1/2}%
\sqrt{\sum
_{\mathbb{\ell}}^{L_{K}}(2\mathbb{\ell}+1)C_{\mathbb{\ell}%
}(f_{j;q})}
\leq O(L_{K}),
\]
whence
\[
\mathbb{E} \Bigl\{ \sup_{d(x_{1},x_{2})\leq\delta}\bigl\llvert
f_{j;q}(x_{1})-f_{j;q}(x_{2})\bigr
\rrvert \Bigr\} \leq CL_{K}^{3}\delta ,
\]
for some $C>0$, uniformly over $j$, and thus the result follows [once again,
recall that $L_{K}$ is fixed by assumption (\ref{kernexp})]. The proof
for $%
 \{ \widetilde{g}_{j;q} \} $ is analogous.
\end{pf}

\subsection{Asymptotic proximity of distributions}

Our discussion above shows that the finite-dimensional distributions of the
non-Gaussian sequence of random fields $ \{ \widetilde
{g}_{j;q} \} $
converge to those of the Gaussian sequence $ \{ f_{j;q} \} $
as $j$
tends to infinity; moreover, both sequences are tight. However, the
finite-dimensional distributions of neither processes converge to a
well-defined limit. In view of this situation, we need a broader notion of
convergence than the one envisaged in standard treatment such as \cite%
{Billingsley}; this extended form of convergence is provided by the notion
of \emph{Asymptotic Proximity}, or \emph{Merging}, of distributions, as
discussed, for instance, by \cite{davydov,aristotilediaconis,dudley} and others.

\begin{definition}[(Asymptotic proximity of distribution \cite{davydov,aristotilediaconis,dudley})]
Let $g_{n},f_{n}$ be two sequences of
random elements in some metric space $(X,\rho)$, possibly defined on two
different probability spaces. We say that the laws of $g_{n},f_{n}$ are
\emph{asymptotically merging}, or $\emph{asymptotically}$ $\emph{proximal}$,
(denoted as $g_{n}\Rightarrow f_{n}$) if and only if as $n\rightarrow
\infty
$
\[
\bigl\llvert \mathbb{E}h(g_{n})-\mathbb{E}h(f_{n})\bigr
\rrvert \rightarrow 0%
,
\]
for all continuous and bounded functionals $h\in\mathcal
{C}_{b}(X,\mathbb{R}%
)$.
\end{definition}

In view of the results provided in the previous subsection, it is immediate
to establish that the sequences $ \{ \widetilde{g}_{j;q} \}
, \{ f_{j;q} \} $ are proximal. Indeed,

\begin{theorem}
As $j\rightarrow\infty$
\[
\widetilde{g}_{j;q}\Longrightarrow f_{j;q},
\]
that is, for all $h=h\dvtx \mathcal{C}(S^{2},\mathbb{R})\rightarrow\mathbb
{R}$, $h$
continuous and bounded, we have
\[
\bigl\llvert \mathbb{E}h(\widetilde{g}_{j;q})-\mathbb{E}h(f_{j;q})
\bigr\rrvert \rightarrow0.
\]
\end{theorem}

\begin{pf}
Applying to our circumstances the characterization of asymptotic proximity
provided by \cite{davydov}, we find that the sequences $ \{
\widetilde{g}%
_{j;q} \} , \{ f_{j;q} \} $ are asymptotically proximal
if and
only if they are both tight and their finite-dimensional distribution
converge, that is, for all $n\geq1$, $x_{1},\ldots,x_{n}\in K$, we have that
\[
\bigl\llvert \Pr \bigl\{ \bigl(\widetilde{g}_{j;q}(x_{1}),
\ldots,\widetilde{g}%
_{j;q}(x_{n})\bigr)\in A \bigr
\} -\Pr \bigl\{ \bigl(f_{j;q}(x_{1}),\ldots,f_{j;q}(x_{n})
\bigr)\in A \bigr\} \bigr\rrvert \rightarrow0
\]
for all $A\in\mathcal{B}(\mathbb{R}^{n})$. Now convergence of the
finite-dimensional distributions was established in Section~\ref{fdd},
while tightness was established in Section~\ref{tightness}; thus the
result follows immediately.
\end{pf}

As a simple application of the asymptotic proximity result, we have
\[
\mathbb{E} \biggl\{ \frac{\sup\widetilde{g}_{j;q}}{1+\sup\widetilde
{g}_{j;q}}%
 \biggr\} \rightarrow
\mathbb{E} \biggl\{ \frac{\sup f_{j;q}}{1+\sup
f_{j;q}}%
 \biggr\}.
\]
It should be noted that asymptotically proximal sequences do not enjoy all
the same properties as in the standard weak convergence case. For instance,
it is known that the Portmanteau lemma does not hold in general, that
is, it is
not true that, for every Borel set such that $\Pr \{ g_{n}\in
\partial
A \} =\Pr \{ f_{n}\in\partial A \} =0$, we have
\[
\bigl\llvert \Pr \{ g_{n}\in A \} -\Pr \{ f_{n}\in A \}
\bigr\rrvert \rightarrow0;
\]
as a counterexample, it is enough to consider the sequences $f_{n}=-n^{-1}$
and $g_{n}=n^{-1}$. However, it is indeed possible to obtain more stringent
characterizations when the subsequences are asymptotically Gaussian. We have
the following.

\begin{proposition}
\label{gastro} For every $A\in\mathcal{B(}\mathbb{R)}$, we have that
\[
\Bigl\llvert \Pr \Bigl\{ \sup_{x\in S^{2}}\widetilde{g}_{j;q}(x)
\in A \Bigr\} -\Pr \Bigl\{ \sup_{x\in S^{2}}f_{j;q}\in A
\Bigr\} \Bigr\rrvert \rightarrow0%
.
\]
\end{proposition}

\begin{pf}
We shall argue again by contradiction. Assume that there exists a
subsequence $j_{n}^{\prime}$ such that for some $\varepsilon>0$
%
\begin{equation}
\Bigl\llvert \Pr \Bigl\{ \sup_{x\in S^{2}}\widetilde{g}_{j_{n}^{\prime
};q}(x)
\in A \Bigr\} -\Pr \Bigl\{ \sup_{x\in S^{2}}f_{j_{n}^{\prime
};q}\in A
\Bigr\} \Bigr\rrvert >\varepsilon. \label{igor}
\end{equation}
By relative compactness, there exists a subsequence $j_{n}^{\prime
\prime}$
and a limiting process $g_{\infty;q}$ such that
\[
\Bigl\llvert \Pr \Bigl\{ \sup_{x\in S^{2}}\widetilde{g}_{j_{n}^{\prime
\prime
};q}(x)
\in A \Bigr\} -\Pr \Bigl\{ \sup_{x\in S^{2}}\widetilde{g}_{\infty
;q}
\in A \Bigr\} \Bigr\rrvert \rightarrow0.
\]
Likewise, consider $ \{ j_{n}^{\prime\prime\prime} \}
\subset
 \{ j_{n}^{\prime\prime} \} $; again by relative compactness
there exist $f_{\infty;q}$ such that $f_{j_{n}^{\prime\prime\prime
};q}\Rightarrow f_{\infty;q}$, and hence
\[
\Bigl\llvert \Pr \Bigl\{ \sup_{x\in S^{2}}f_{j_{n}^{\prime\prime\prime
};q}(x)\in A
\Bigr\} -\Pr \Bigl\{ \sup_{x\in S^{2}}f_{\infty;q}\in A \Bigr\}
\Bigr\rrvert \rightarrow0.
\]
Note that $f_{\infty;q},\widetilde{g}_{\infty;q}$ are isotropic and
continuous Gaussian random fields; indeed for $\widetilde{g}_{\infty
;q}$ it
suffices to recall that the finite-dimensional distributions of $ \{
\widetilde{g}_{j;q} \} $ are asymptotically Gaussian (Section~\ref{fdd}), so
if a weak limit exists it must be Gaussian as well. Hence, the
supremum is necessarily a continuous random variable, and no problems with
nonzero boundary probabilities can arise. Also, the finite-dimensional
distributions are a determining class, whence the two Gaussian
processes $%
f_{\infty;q},\widetilde{g}_{\infty;q}$ must necessarily have the same
distribution. Hence,
\[
\Bigl\llvert \Pr \Bigl\{ \sup_{x\in S^{2}}f_{j_{n}^{\prime\prime\prime
};q}(x)\in A
\Bigr\} -\Pr \Bigl\{ \sup_{x\in S^{2}}\widetilde{g}%
_{j_{n}^{\prime\prime\prime};q}(x)
\in A \Bigr\} \Bigr\rrvert \rightarrow0%
,
\]
yielding a contradiction with \eqref{igor}.
\end{pf}

This result immediately suggests two alternative ways to achieve the
ultimate goal of this paper, for example, the evaluation of excursion
probabilities
on the non-Gaussian sequence of random fields $ \{ g_{j;q} \}
$. On
one hand, it follows immediately that these probabilities may be evaluated
by simulations, by simply sampling realizations of a Gaussian field with
known angular power spectrum; for $q=2$, for example, $f_{j;q}$ is
simply a
Gaussian process with angular power spectrum given by~(\ref{powspe}). There
exist now very efficient techniques, based on packages such as HealPix
\cite%
{HEALPIX}, for the numerical simulation of Gaussian fields with a given
power spectra; here the only burdensome step can be the numerical evaluation
of expressions like~(\ref{powspe}), but this is in any case much faster and
simpler than the Monte Carlo evaluation of smoothed non-Gaussian fields.
Therefore, our result has an immediate applied relevance.

One can try, however, to be more ambitious than this, and verify whether
these excursion probabilities can indeed be evaluated analytically, rather
than by Gaussian simulations. This is in fact the purpose of the next, and
final, section.

\section{Asymptotics for the excursion probabilities}
\label{sec:application}

The purpose of this final section is to show how the previous weak
convergence results allow for very neat characterizations of excursion
probabilities, even in non-Gaussian circumstances. In particular, our main
result is the following.

\begin{theorem}
\label{thm:exc:prob} There exists constants $\alpha>1$ and $\mu^{+}>0$
such that, for \mbox{$u>\mu^{+}$}
\begin{eqnarray*}
&&\limsup_{j\rightarrow\infty}\Bigl\llvert \Pr \Bigl\{ \sup
_{x\in
S^{2}}\tilde{%
g}_{j;q}(x)>u \Bigr\} - \bigl
\{ 2\bigl(1-\Phi(u)\bigr)+2u\phi(u)\lambda _{j;q} \bigr\} \Bigr\rrvert
\\
&&\qquad\leq\exp \biggl( -\frac{\alpha
u^{2}}{2} \biggr) %
,
\end{eqnarray*}
where [see (\ref{porteaperte2})]
%
\begin{equation}
\lambda_{j;q}=\frac{\sum_{\mathbb{\ell}=1}^{L}({(2\mathbb{\ell}+1)}/{(4\pi)})C_{\mathbb{\ell};j,q}P_{\mathbb{\ell}}^{\prime}(1)}{\sum_{\mathbb{%
\ell}=1}^{L}({(2\mathbb{\ell}+1)}/{(4\pi)})C_{\mathbb{\ell};j,q}}. \label{porteaperte}
\end{equation}
\end{theorem}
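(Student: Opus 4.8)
The plan is to prove the statement in three moves: first replace the supremum distribution of the non-Gaussian field $\tilde{g}_{j;q}$ by that of its Gaussian counterpart $\tilde{f}_{j;q}$ using the asymptotic proximity already established in the preceding section; second, approximate the Gaussian excursion probability by the expected Euler-Poincar\'{e} characteristic through the Adler-Taylor super-exponential bound underlying (\ref{eqn:sup:EC}); and third, evaluate that expected characteristic explicitly via the Gaussian kinematic formula (\ref{eqn:GKF}), recovering precisely $2(1-\Phi(u))+2u\phi(u)\lambda_{j;q}$. A triangle inequality and a passage to $\limsup_{j\to\infty}$ will then close the argument.

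First I would note that the normalizing factor $\sqrt{Var(g_{j;q})}$ is deterministic and equal for $g_{j;q}$ and $f_{j;q}$ (they share a covariance), so the Proposition on suprema of asymptotically proximal sequences applies after rescaling the Borel set, giving, for every fixed $u$,
\begin{equation*}
\varepsilon_{j}(u):=\left\vert \Pr\{\sup_{x\in S^{2}}\tilde{g}_{j;q}(x)>u\}-\Pr\{\sup_{x\in S^{2}}\tilde{f}_{j;q}(x)>u\}\right\vert \longrightarrow 0,\quad j\to\infty .
\end{equation*}
Next, since $\tilde{f}_{j;q}$ is a smooth centred unit-variance isotropic Gaussian field on the compact manifold $S^{2}$, Theorem 14.3.3 of \cite{RFG} furnishes a constant $\alpha_{j}>1$ and a threshold $\mu_{j}^{+}$ such that, for $u>\mu_{j}^{+}$,
\begin{equation*}
\left\vert \Pr\{\sup_{x\in S^{2}}\tilde{f}_{j;q}(x)>u\}-\mathbb{E}\{\mathcal{L}_{0}(A_{u}(\tilde{f}_{j;q};S^{2}))\}\right\vert \leq \exp\left( -\frac{\alpha_{j}u^{2}}{2}\right) .
\end{equation*}
The expected Euler characteristic is computed exactly as in the Gaussian Lemma leading to (\ref{feb1}), but with the angular power spectrum replaced by the spectrum $C_{\ell;j,q}$ of $g_{j;q}$ derived in (\ref{porteaperte2}); the $i=0$ case of (\ref{eqn:GKF}) yields
\begin{equation*}
\mathbb{E}\{\mathcal{L}_{0}(A_{u}(\tilde{f}_{j;q};S^{2}))\}=2(1-\Phi(u))+2u\phi(u)\lambda_{j;q},
\end{equation*}
where $\lambda_{j;q}$ is precisely the second spectral moment (\ref{porteaperte}), the reduction $4\pi\,ue^{-u^{2}/2}/\sqrt{(2\pi)^{3}}=2u\phi(u)$ being routine.

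The hard part will be the uniformity of the constants: the right-hand side $\exp(-\alpha u^{2}/2)$ carries a single $\alpha>1$ and the statement holds above a single threshold $\mu^{+}$, so I must establish $\alpha:=\inf_{j}\alpha_{j}>1$ and $\mu^{+}:=\sup_{j}\mu_{j}^{+}<\infty$ (whereupon $\limsup_{j}\exp(-\alpha_{j}u^{2}/2)\leq \exp(-\alpha u^{2}/2)$). The decisive observation is that the smoothing kernel $K$ is a finite-order polynomial, supported in harmonic space up to the fixed degree $L_{K}$ (see (\ref{kernexp})); hence the factor $\kappa^{2}(\ell)$ in (\ref{porteaperte2}) forces $C_{\ell;j,q}=0$ for $\ell>L_{K}$, so that every field $\tilde{f}_{j;q}$ lives in the same fixed finite-dimensional space $\bigoplus_{\ell\leq L_{K}}\mathcal{H}_{\ell}$ of low-degree spherical harmonics. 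Within this fixed band the second (and all higher) spectral moments are bounded uniformly in $j$ --- indeed $\lambda_{j;q}\leq L_{K}(L_{K}+1)/2$ since $P_{\ell}^{\prime}(1)=\ell(\ell+1)/2$ --- so the induced Gaussian metrics $g^{\tilde{f}_{j;q}}$, the associated volumes, and the critical radius of $S^{2}$ under these metrics all remain in a compact range. As the Adler-Taylor constants $\alpha_{j}$ and thresholds $\mu_{j}^{+}$ depend only on these geometric data, the required uniform bounds follow. Combining the three displays by the triangle inequality and taking $\limsup_{j\to\infty}$ to annihilate $\varepsilon_{j}(u)$ then gives the claim for $u>\mu^{+}$, completing the proof.
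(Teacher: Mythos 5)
Your proposal is correct and follows essentially the same route as the paper: asymptotic proximity of suprema to transfer from $\tilde{g}_{j;q}$ to the Gaussian field, Theorem 14.3.3 of Adler--Taylor for the super-exponential Euler-characteristic approximation, the Gaussian kinematic formula for the explicit value $2(1-\Phi(u))+2u\phi(u)\lambda_{j;q}$, and uniformity in $j$ traced back to the finite harmonic support $L_{K}$ of the kernel. The only substantive difference is that where you assert the uniform control of $\mu_{j}^{+}$, $\sigma_{j}^{2}$ and $\alpha_{j}$ from the compact range of the geometric data, the paper actually carries out that verification --- constructing the conditional field $\widehat{f}_{j;q}^{x_{0}}$, proving its uniform Lipschitz bound (Proposition \ref{claim:lip}) and invoking the metric-entropy estimate --- which is where the bulk of its technical effort lies.
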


\begin{pf}
Note that
%
\begin{eqnarray}
\label{eqn:1} && \Bigl\llvert \Pr \Bigl\{ \sup_{x\in S^{2}}
\tilde{g}_{j;q}(x)>u \Bigr\} - \bigl\{ 2\bigl(1-\Phi(u)\bigr)+2u\phi(u)
\lambda_{j;q} \bigr\} \Bigr\rrvert
\nonumber
\\
&&\qquad\leq\Bigl\llvert \Pr \Bigl\{ \sup_{x\in S^{2}}\tilde
{g}_{j;q}(x)>u \Bigr\} -\Pr \Bigl\{ \sup_{x\in S^{2}}
\tilde{f}_{j;q}(x)>u \Bigr\} \Bigr\rrvert
\\
&&\qquad\quad{} + \Bigl\llvert \Pr \Bigl\{ \sup_{x\in S^{2}}\tilde{f}_{j;q}(x)>u
\Bigr\} - \bigl\{ 2\bigl(1-\Phi(u)\bigr)+2u\phi(u)\lambda_{j;q} \bigr\}
\Bigr\rrvert ,\nonumber
\end{eqnarray}
where $\tilde{f}_{j;q}$ is as defined in the previous section. Observe that
by Proposition~\ref{gastro} the first part of the right-hand side of the
above inequality converges to $0$, therefore, we need only prove the
required estimate for the second part of the right-hand side.

We shall mainly exploit Theorem~14.3.3 of \cite{RFG}, with some
modifications to adapt it to our needs. For each $x_{0}\in S^{2}$, let us
define the corresponding pivoted random field as
%
\begin{eqnarray}
\label{eqn:f-hat} \widehat{f}_{j;q}^{x_{0}}(x)
&=&
\frac{1}{1-\rho(x,x_{0})} \biggl\{%
f_{j;q}(x)-\rho(x,x_{0})f_{j;q}(x_{0})
\nonumber
\\
&&\hspace*{28pt}\qquad\quad{}-\operatorname{Cov}\biggl(f_{j;q}(x),\frac{\partial}{\partial\vartheta
}f_{j;q}(x_{0})
\biggr)\nonumber\\
&&\hspace*{72pt}{}\times \operatorname{Var}\biggl(%
\frac{\partial}{\partial\vartheta}f_{j;q}(x)\biggr)
\frac{\partial
}{\partial
\vartheta}f_{j;q}(x)
\\
&&\hspace*{28pt}\qquad\quad{}-\operatorname{Cov}\biggl(f_{j;q}(x),\frac{\partial}{\sin\vartheta\partial\phi}%
f_{j;q}(x_{0})
\biggr)\nonumber\\
&&\hspace*{72pt}{}\times \operatorname{Var}\biggl(\frac{\partial}{\sin\vartheta\partial\phi
}f_{j;q}(x)\biggr)%
\frac{\partial}{\sin\vartheta\partial\phi}f_{j;q}(x) \biggr\} ,
\nonumber
\end{eqnarray}
where $\rho(x,x_{0})=\mathbb{E} ( f_{j;q}(x)f_{j;q}(x_{0}) ) $.
Next define
\[
\mu_{j}^{+}=\sup_{x_{0}}\mathbb{E} \Bigl(
\sup_{x\neq x_{0}}\widehat{f}%
_{j;q}^{x_{0}}(x)
\Bigr)
\]
and
\[
\sigma_{j}^{2}=\sup_{x_{0}}\sup
_{x\neq x_{0}}\operatorname{Var}\bigl(\widehat{f}%
_{j;q}^{x_{0}}(x)
\bigr).
\]
Then from page 371 of \cite{RFG}, we know that for $u\geq\mu_{j}^{+}$
%
\begin{eqnarray}\label{eqn:proposition}
&&\Bigl\llvert \Pr \Bigl\{ \sup_{x\in S^{2}}f_{j;q}(x)>u
\Bigr\} -\mathbb {E}%
\mathcal{L}_{0}\bigl(A_{u}
\bigl(f_{j;q},S^{2}\bigr)\bigr)\Bigr\rrvert
\nonumber
\\[-8pt]
\\[-8pt]
\nonumber
&&\qquad\leq K u e^{-{(u-\mu_{j}^{+})^{2}}/{2} ( 1+{1}/{(2\sigma_{j}^{2})} ) }\sum_{i=0}^{2}
\Bigl\{ \mathbb{E}\Bigl\llvert \det _{i} \Bigl( -\nabla^{2}f_{j;q}-f_{j;q}I_{2}
\bigr) \Bigr\rrvert ^{2} \Bigr\} ^{1/2},
\end{eqnarray}
where $I_{2}$ is the $2\times2$ identity matrix, $\det_{i}$ of a
matrix is
the sum over all the $i$-minors of the matrix under consideration, and $K$
is a constant not depending on~$j$. Note that the expression on page
371 of
\cite{RFG} also involves an integral over the parameter space with the
metric induced by the second-order spectral moment. However, under (\ref
{kernexp}) this integral is easily seen to be uniformly bounded with respect
to $j$, so that we can get rid of it by invoking the isotropy of the
field $%
f_{j;q}$, and absorbing the arising constant into $K$ upfront.

Our goal is to get a uniform bound for the right-hand side of %
\eqref{eqn:proposition}. Clearly, $\sum_{i=0}^{2}\mathbb{E}\llvert
\det_{i} ( -\nabla^{2}f_{j;q}-f_{j;q}I_{2} ) \rrvert
^{2}$ is
bounded above by a universal constant, largely because of the finite
expansion for the kernel $K(\cdot,\cdot)$ used to define the field $%
g_{j;q} $. Next, to get a uniform bound for $\mu_{j}^{+}$, we shall resort
to a Slepian inequality type of argument, and use the standard
techniques of
estimating the expected value of supremum of a Gaussian random field using
\emph{metric entropy}.

In particular, we shall prove Proposition~\ref{claim:lip} in the \hyperref[app]{Appendix}
that the assumed regularity conditions on the kernel $K$ ensure the
following:
%
\begin{equation}
\mathbb{E}\bigl(\widehat{f}_{j;q}^{x_{0}}(x_{2})-
\widehat{f}%
_{j;q}^{x_{0}}(x_{1})
\bigr)^{2}\leq c(L_{K},q)|x_{2}-x_{1}|.
\label{maccibound}
\end{equation}
Then using this uniform bound and a Slepian type of comparison
argument, we
get a uniform (over $j$) bound on the metric entropy corresponding to
various $\widehat{f}_{j;q}^{x_{0}}$, which in turn ensures that there exist
finite constants $\alpha>1$ and $\mu^{+}=\sup_{j}\mu_{j}^{+}<\infty$,
such that, for $u>\mu^{+}$,
%
\begin{equation}
\Bigl\llvert \Pr \Bigl\{ \sup_{x\in S^{2}}f_{j;q}(x)>u
\Bigr\} -\mathbb{E} 
\mathcal{L}_{0}\bigl(A_{u}
\bigl(f_{j;q},S^{2}\bigr)\bigr)\Bigr\rrvert \leq\exp\biggl(-
\frac{\alpha
u^{2}}{2}\biggr),\label{unif:exc:prob}
\end{equation}
uniformly over $j$, where
\[
\mathbb{E}\mathcal{L}_{0}\bigl(A_{u}
\bigl(f_{j;q},S^{2}\bigr)\bigr)=2\bigl(1-\Phi(u)\bigr)+2u\phi
(u)\lambda_{j;q},
\]
which proves the result.
\end{pf}

\begin{appendix}\label{app}
\section*{Appendix}

All of this section is devoted to the proof of the following proposition.

\begin{proposition}
\label{claim:lip} Under the assumption that the kernel $K$ appearing in the
definition of $\tilde{g}_{j;q}$ is of the form (\ref{kernexp}), the
field $%
\widehat{f}_{j;q}^{x_{0}}$ satisfies the following:
%
\begin{equation}
\mathbb{E}\bigl(\widehat{f}_{j;q}^{x_{0}}(x_{2})-
\widehat{f}%
_{j;q}^{x_{0}}(x_{1})
\bigr)^{2}\leq c(L_{K},q)|x_{2}-x_{1}|,
\label{eqn:claim:lip}
\end{equation}
where the constant $c(L_{K},q)$ depends on $q$ and $\mathbb{\ell}$, but
does not depend on $j$.
\end{proposition}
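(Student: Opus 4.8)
The plan is to exploit the band-limitedness that the kernel assumption (\ref{kernexp}) forces on the Gaussian field $f_{j;q}$. Since $\kappa(\ell)=0$ for $\ell>L_{K}$ and the angular power spectrum $C_{\mathbb{\ell};j,q}$ of (\ref{porteaperte2}) carries the factor $\kappa^{2}(\mathbb{\ell})$, we have $C_{\mathbb{\ell};j,q}=0$ for every $\ell>L_{K}$. Hence, for each $j$, the field $f_{j;q}$ is a finite linear combination of spherical harmonics of degree at most $L_{K}$; it is therefore almost surely real-analytic, and its covariance
\[
\rho(x,x_{0})=\sum_{\ell=1}^{L_{K}}\frac{2\ell+1}{4\pi}\,\frac{C_{\mathbb{\ell};j,q}}{\sum_{\ell'}\frac{2\ell'+1}{4\pi}C_{\ell';j,q}}\,P_{\ell}(\langle x,x_{0}\rangle)
\]
is a trigonometric polynomial of degree $\leq L_{K}$ in the geodesic variable. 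The first step is to record that every derivative of $\rho$ up to third order is bounded, uniformly in $j$ and in the base points, by a constant depending only on $L_{K}$ and $q$: this follows from the elementary estimates $|P_{\ell}^{(k)}(1)|\leq c_{k}\ell^{2k}$ together with $\ell\leq L_{K}$, and from the fact that the normalizing ratios — in particular the second spectral moment $\lambda_{j;q}$ of (\ref{porteaperte}) — are bounded above by $L_{K}(L_{K}+1)/2$, because the sums defining them range only over $\ell\leq L_{K}$. This last observation is precisely what yields the uniformity in $j$ asserted in the statement, despite the $j$-dependence of the spectrum itself.

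Next I would reduce the claim to a deterministic estimate on covariances. For fixed $x_{0}$, the residual $\widehat{f}_{j;q}^{x_{0}}$ is a linear functional of the Gaussian vector formed by $f_{j;q}$ and its first derivatives evaluated at $x$ and at $x_{0}$; consequently the increment $\widehat{f}_{j;q}^{x_{0}}(x_{2})-\widehat{f}_{j;q}^{x_{0}}(x_{1})$ is again centred Gaussian, and $\mathbb{E}(\widehat{f}_{j;q}^{x_{0}}(x_{2})-\widehat{f}_{j;q}^{x_{0}}(x_{1}))^{2}$ can be written explicitly as a quadratic form whose entries are values of $\rho$ and of its first and second derivatives at the points $x_{1},x_{2},x_{0}$. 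The task is then to bound this quadratic form by $c(L_{K},q)\,|x_{2}-x_{1}|$, and I would split it into two regimes according to the position of $x_{1},x_{2}$ relative to $x_{0}$.

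Away from the diagonal — say $\min(d(x_{1},x_{0}),d(x_{2},x_{0}))\geq\delta$ for a fixed small $\delta$ — the denominator $1-\rho(\cdot,x_{0})$ is bounded below by a positive constant, while the numerator is smooth with derivatives controlled by $c(L_{K},q)$; thus $\widehat{f}_{j;q}^{x_{0}}$ is Lipschitz in $L^{2}(\Omega)$ with constant $c(L_{K},q)$, and since $S^{2}$ has bounded diameter the resulting quadratic-in-distance estimate is dominated by the linear bound claimed. The delicate regime, and the main obstacle, is the near-diagonal one, where $1-\rho(x,x_{0})$ degenerates like $\tfrac{1}{2}\lambda_{j;q}\,d(x,x_{0})^{2}$. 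The point is that the subtraction of the regression of $f_{j;q}(x)$ onto $f_{j;q}(x_{0})$ and the gradient $\nabla f_{j;q}(x_{0})$ — which is exactly the Gaussian conditioning underlying Theorem 14.3.3 of \cite{RFG} — makes the residual vanish together with its gradient at $x_{0}$, hence to second order, matching the degeneracy of the denominator. Carrying the Taylor expansion in normal coordinates at $x_{0}$ one order beyond this cancellation, and invoking the uniform derivative bounds on $\rho$ from the first step, shows that $\widehat{f}_{j;q}^{x_{0}}$ extends across $x=x_{0}$ to a field Lipschitz in $L^{2}(\Omega)$ with constant $c(L_{K},q)$, uniformly in $x_{0}$ and $j$. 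Combining the two regimes gives (\ref{eqn:claim:lip}); I expect the careful bookkeeping of the second-order cancellation — verifying that no $1/(1-\rho)$ singularity survives after one differentiation — to be where essentially all the real work lies.
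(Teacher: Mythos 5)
Your proposal is correct and follows essentially the same route as the paper's Appendix: both exploit the finite expansion (\ref{kernexp}) to get derivative bounds on $\rho$ uniform in $j$, treat the region away from $x_{0}$ by direct term-by-term Lipschitz estimates on the explicit covariance expression, and resolve the $0/0$ degeneracy at $x_{0}$ by the second-order cancellation of the regression residual against $(1-\rho)$ (which the paper carries out via repeated applications of l'H\^{o}pital's rule rather than a Taylor expansion, but this is the same computation). The only difference is one of bookkeeping, not of substance.
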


As a by-product of the proof, we shall also obtain a uniform upper
bound on $%
\sigma_{j}^{2}$. For notational simplicity and without loss of generality,
we take the coefficients $ \{ k_{i}\frac{2i+1}{4\pi} \} $ in %
\eqref{kernexp} to be identically equal to one.

Writing $\rho(x,y)=\operatorname{cov}(f_{j;q}(x),f_{j;q}(y))$, and $\partial
_{\phi
_{x}}$, $\partial_{\theta_{x}}$ as directional derivatives at $x$ in the
normalized spherical coordinate directions, we have
\begin{eqnarray*}
&&\operatorname{cov} \bigl( \widehat{f}_{j;q}^{x_{0}}(x_{1}),
\widehat{f}%
_{j;q}^{x_{0}}(x_{2}) \bigr)
\\
&&\qquad=\frac{1}{(1-\rho(x_{0},x_{1}))(1-\rho(x_{0},x_{2}))}\\
&&\qquad\quad{}\times \bigl(\rho (x_{1},x_{2})-
\rho(x_{0},x_{1})\rho(x_{0},x_{2})
\\
&&\hspace*{16pt}\quad\qquad{}-\operatorname{cov}\bigl(f_{j;q}(x_{1}),\partial_{\theta_{x_{0}}}f_{j;q}(x_{0})
\bigr)%
\operatorname{cov}\bigl(f_{j;q}(x_{2}),
\partial_{\theta_{x_{1}}}f_{j;q}(x_{1})\bigr)\\
&&\hspace*{24pt}\quad\qquad{}\times\operatorname{cov}\bigl(\partial_{\theta_{x_{1}}}f_{j;q}(x_{1}),
\partial_{\theta
_{x_{1}}}f_{j;q}(x_{1})\bigr)
\\
&&\hspace*{16pt}\quad\qquad{}-\operatorname{cov}\bigl(f_{j;q}(x_{1}),\partial_{\phi_{x_{0}}}f_{j;q}(x_{0})
\bigr)%
\operatorname{cov}\bigl(f_{j;q}(x_{2}),
\partial_{\phi_{x_{1}}}f_{j;q}(x_{1})\bigr)\\
&&\hspace*{24pt}\quad\qquad{}\times\operatorname
{cov}%
\bigl(\partial_{\phi_{x_{1}}}f_{j;q}(x_{1}),
\partial_{\phi
_{x_{1}}}f_{j;q}(x_{1})\bigr)
\\
&&\hspace*{16pt}\quad\qquad{}-\rho(x_{0},x_{1})\rho(x_{0},x_{2})+
\rho(x_{0},x_{1})\rho (x_{0},x_{2})
\rho(x_{0},x_{0})
\\
&&\hspace*{16pt}\qquad\quad{}+\rho(x_{0},x_{2})\operatorname{cov}\bigl(f_{j;q}(x_{1}),
\partial_{\theta
_{x_{0}}}f_{j;q}(x_{0})\bigr)\operatorname{cov}
\bigl(f_{j;q}(x_{0}),\partial_{\theta
_{x_{1}}}f_{j;q}(x_{1})
\bigr)\\
&&\hspace*{24pt}\quad\qquad{}\times\operatorname{cov}\bigl(\partial_{\theta
_{x_{1}}}f_{j;q}(x_{1}),
\partial_{\theta_{x_{1}}}f_{j;q}(x_{1})\bigr)
\\
&&\hspace*{16pt}\qquad\quad{}+\rho(x_{0},x_{2})\operatorname{cov}\bigl(f_{j;q}(x_{1}),
\partial_{\phi
_{x_{0}}}f_{j;q}(x_{0})\bigr)\operatorname{cov}
\bigl(f_{j;q}(x_{0}),\partial_{\phi
_{x_{1}}}f_{j;q}(x_{1})
\bigr)\\
&&\hspace*{24pt}\quad\qquad{}\times\operatorname{cov}\bigl(\partial_{\phi
_{x_{1}}}f_{j;q}(x_{1}),
\partial_{\phi_{x_{1}}}f_{j;q}(x_{1})\bigr)
\\
&&\hspace*{16pt}\qquad\quad{}-\operatorname{cov}\bigl(f_{j;q}(x_{2}),\partial_{\theta_{x_{0}}}f_{j;q}(x_{0})
\bigr)%
\operatorname{cov}\bigl(\partial_{\theta_{x_{2}}}f_{j;q}(x_{2}),f_{j;q}(x_{1})
\bigr)\\
&&\hspace*{24pt}\quad\qquad{}\times\operatorname{cov}\bigl(\partial_{\theta_{x_{2}}}f_{j;q}(x_{2}),
\partial_{\theta
_{x_{2}}}f_{j;q}(x_{2})\bigr)
\\
&&\hspace*{16pt}\qquad\quad{}+\rho(x_{0},x_{1})\operatorname{cov}\bigl(f_{j;q}(x_{2}),
\partial_{\theta
_{x_{0}}}f_{j;q}(x_{0})\bigr)\operatorname{cov}\bigl(
\partial_{\theta
_{x_{2}}}f_{j;q}(x_{2}),f_{j;q}(x_{0})
\bigr)\\
&&\hspace*{24pt}\quad\qquad{}\times\operatorname{cov}\bigl(\partial_{\theta
_{x_{2}}}f_{j;q}(x_{2}),
\partial_{\theta_{x_{2}}}f_{j;q}(x_{2})\bigr)
\\
&&\hspace*{16pt}\qquad\quad{}+ \bigl( \operatorname{var}\bigl(\partial_{\theta_{x_{1}}}f_{j;q}(x_{1})
\bigr) \bigr) ^{2}%
\operatorname{cov}\bigl(f_{j;q}(x_{1}),
\partial_{\theta_{x_{0}}}f_{j;q}(x_{0})\bigr)%
\\
&&\hspace*{24pt}\quad\qquad{}\times\operatorname{cov}\bigl(f_{j;q}(x_{2}),\partial_{\theta_{x_{0}}}f_{j;q}(x_{0})
\bigr)
\\
&&\hspace*{24pt}\quad\qquad{}\times\operatorname{cov}\bigl(\partial_{\theta_{x_{1}}}f_{j;q}(x_{1}),
\partial _{\theta_{x_{2}}}f_{j;q}(x_{2})\bigr)
\\
&&\hspace*{16pt}\qquad\quad{}+\operatorname{var}\bigl(\partial_{\theta_{x_{1}}}f_{j;q}(x_{1})
\bigr)\operatorname {var}\bigl(\partial _{\phi_{x_{2}}}f_{j;q}(x_{2})
\bigr)\operatorname{cov}\bigl(f_{j;q}(x_{1}),\partial_{\phi
_{x_{0}}}f_{j;q}(x_{0})
\bigr)
\\
&&\hspace*{24pt}\quad\qquad{}\times\operatorname{cov}\bigl(f_{j;q}(x_{2}),\partial_{\theta
_{x_{0}}}f_{j;q}(x_{0})
\bigr)%
\operatorname{cov}\bigl(\partial_{\theta_{x_{1}}}f_{j;q}(x_{1}),
\partial_{\phi
_{x_{2}}}f_{j;q}(x_{2})\bigr)
\\
&&\hspace*{16pt}\qquad\quad{}-\operatorname{cov}\bigl(f_{j;q}(x_{2}),\partial_{\phi_{x_{0}}}f_{j;q}(x_{0})
\bigr)%
\operatorname{var}\bigl(\partial_{\phi_{x_{2}}}f_{j;q}(x_{2})
\bigr)\\
&&\hspace*{24pt}\quad\qquad{}\times\operatorname{cov}\bigl(\partial _{\phi_{x_{2}}}f_{j;q}(x_{2}),f_{j;q}(x_{1})
\bigr)
\\
&&\hspace*{16pt}\qquad\quad{}+\rho(x_{0},x_{1})\operatorname{cov}\bigl(f_{j;q}(x_{2}),
\partial_{\phi
_{x_{0}}}f_{j;q}(x_{0})\bigr)\operatorname{var}\bigl(
\partial_{\phi
_{x_{2}}}f_{j;q}(x_{2})\bigr)\\
&&\hspace*{24pt}\quad\qquad{}\times
\operatorname{cov}\bigl(\partial_{\phi_{x_{2}}}f_{j;q}(x_{2})f_{j;q}(x_{0})
\bigr)
\\
&&\hspace*{16pt}\qquad\quad{}+\operatorname{var}\bigl(\partial_{\theta_{x_{1}}}f_{j;q}(x_{1})
\bigr)\operatorname {var}\bigl(\partial _{\phi_{x_{2}}}f_{j;q}(x_{2})
\bigr)\\
&&\hspace*{24pt}\quad\qquad{}\times\operatorname{cov}\bigl(f_{j;q}(x_{1}),\partial
_{\theta
_{x_{0}}}f_{j;q}(x_{0})\bigr)
\\
&&\hspace*{24pt}\quad\qquad{}\times\operatorname{cov}\bigl(f_{j;q}(x_{2}),\partial_{\phi
_{x_{0}}}f_{j;q}(x_{0})
\bigr)%
\operatorname{cov}\bigl(\partial_{\theta_{x_{1}}}f_{j;q}(x_{1}),
\partial_{\phi
_{x_{2}}}f_{j;q}(x_{2})\bigr)
\\
&&\hspace*{16pt}\qquad\quad{}+ \bigl( \operatorname{var}\bigl(\partial_{\phi_{x_{1}}}f_{j;q}(x_{1})
\bigr) \bigr) ^{2}%
\operatorname{cov}\bigl(f_{j;q}(x_{1}),
\partial_{\phi_{x_{0}}}f_{j;q}(x_{0})\bigr)\\
&&\hspace*{24pt}\quad\qquad{}\times\operatorname
{cov}%
\bigl(f_{j;q}(x_{2}),\partial_{\phi_{x_{0}}}f_{j;q}(x_{0})
\bigr)
\operatorname{cov}\bigl(\partial_{\phi_{x_{1}}}f_{j;q}(x_{1}),
\partial _{\phi
_{x_{2}}}f_{j;q}(x_{2})\bigr) \bigr).
\end{eqnarray*}

Note that $\rho(x_{1},x_{2})$ can be assumed to have $P_{l}(\langle
x_{1},x_{2}\rangle)$ as the leading polynomial (uniform over all $j$).
Then, taking $x_{1}=x_{2}$ in the above computation, and going through some
more (but simple) calculations, one can show that there exists a
constant $%
M>0$ such that $\operatorname{Var}(\widehat{f}_{j;q}^{x_{0}}(x))\leq M$ uniformly over
all $%
j$, which in turn, together with the assumption of isotropy, proves
that $%
\sigma^{2}_j\leq M^{\prime}$, for some $M^{\prime} <\infty$.

Next, to prove Proposition~\ref{claim:lip} we begin with
\begin{eqnarray*}
&&\mathbb{E} \bigl( \widehat{f}_{j;q}^{x_{0}}(x_{2})-
\widehat{f}%
_{j;q}^{x_{0}}(x_{1}) \bigr)
^{2}\\
&&\qquad=\operatorname{var}\bigl(\widehat{f}%
_{j;q}^{x_{0}}(x_{1})
\bigr)+\operatorname{var}\bigl(\widehat{f}_{j;q}^{x_{0}}(x_{2})
\bigr)-2%
\operatorname{cov} \bigl( \widehat{f}_{j;q}^{x_{0}}(x_{1}),
\widehat{f}%
_{j;q}^{x_{0}}(x_{2}) \bigr).
\end{eqnarray*}

We shall analyze each pair of the terms in the above expression separately.
Let us, for instance, consider (together) one of the, seemingly, more
involved term of the expression which is the last term of the
covariance and
the corresponding term in $\operatorname{var}(\widehat
{f}_{j;q}^{x_{0}}(x_{1}))$.\vspace*{1pt} At
the expense of introducing more notation, let us write $C_{\mathbb{\ell
}%
;\phi\phi}=\operatorname{var}(\partial_{\phi_{x}}f_{j;q}(x))$ (note that
due to
isotropy, the variance does not depend on the spatial point $x$), then the
difference between the last term of $\operatorname{Var}(\widehat{f}^{x_0}_{j;q}(x_1))$ and
the last term of $\operatorname{Cov}(\widehat{f}^{x_0}_{j;q}(x_1),\widehat{f}%
^{x_0}_{j;q}(x_2))$, can be written as, for all $x_{1},x_{2}\in (
B(x_{0},\varepsilon) ) ^{c}$ that is, outside a ball of size
$\varepsilon$
around the point $x_{0}$, we shall have
\begin{eqnarray*}
&&\frac{1}{(1-\rho(x_{0},x_{1}))^{2}(1-\rho(x_{0},x_{2}))}
\\
&&\qquad{}\times \bigl(C_{\mathbb{\ell};\phi\phi}^{3} \bigl( \operatorname{cov}%
\bigl(f_{j;q}(x_{1}),\partial_{\phi_{x_{0}}}f_{j;q}(x_{0})
\bigr) \bigr) ^{2}\bigl(1-\rho (x_{0},x_{2})
\bigr)\\
&&\hspace*{38pt}{}-C_{\mathbb{\ell};\phi\phi}^{2}\operatorname{cov}%
\bigl(f_{j;q}(x_{1}),
\partial_{\phi_{x_{0}}}f_{j;q}(x_{0})\bigr)
\\
&&\hspace*{48pt}{}\times\operatorname{cov}\bigl(f_{j;q}(x_{2}),\partial_{\phi_{x_{0}}}f_{j;q}(x_{0})
\bigr)\\
&&\hspace*{57pt}{}\times
\operatorname{cov}\bigl(\partial_{\phi_{x_{1}}}f(x_{1}),
\partial_{\phi
_{x_{2}}}f_{j;q}(x_{2})\bigr) \bigl(1-
\rho(x_{0},x_{1})\bigr) \bigr)
\\
&&\qquad=\frac{C_{\mathbb{\ell};\phi\phi}^{2}\partial_{\phi_{x_{0}}}\rho
(\langle x_{1},x_{0}\rangle)}{(1-\rho(x_{0},x_{1}))^{2}(1-\rho
(x_{0},x_{2}))}
\\
&&\qquad\quad{}\times \bigl(C_{\mathbb{\ell};\phi\phi}\bigl(1-\rho (x_{0},x_{2})
\bigr)\partial_{\phi_{x_{0}}}\rho(x_{1},x_{0})\\
&&\hspace*{48pt}{}-\bigl(1-
\rho (x_{0},x_{1})\bigr)\partial_{\phi_{x_{0}}}
\rho(x_{2},x_{0})\partial_{\phi
_{x_{1}}}
\partial_{\phi_{x_{2}}}\rho(x_{1},x_{2}) \bigr)
\\
&&\qquad=\frac{C_{\mathbb{\ell};\phi\phi}^{2}\partial_{\phi_{x_{0}}}\rho
(\langle x_{1},x_{0}\rangle)}{(1-\rho(x_{0},x_{1}))^{2}(1-\rho
(x_{0},x_{2}))}
\\
&&\qquad\quad{}\times \bigl( \bigl( \partial_{\phi_{x_{0}}}\rho (x_{1},x_{0})-
\partial_{\phi_{x_{0}}}\rho(x_{2},x_{0}) \bigr)
C_{\mathbb{%
\ell};\phi\phi}\bigl(1-\rho(x_{0},x_{2})\bigr)
\\
&&\hspace*{14pt}\qquad\quad{}+\partial_{\phi_{x_{0}}}\rho(x_{2},x_{0}) \bigl(
C_{\mathbb
{\ell}%
;\phi\phi}\bigl(1-\rho(x_{0},x_{2})\bigr)\\
&&\hspace*{121pt}{}-\bigl(1-
\rho(x_{0},x_{1})\bigr)\partial_{\phi
_{x_{1}}}
\partial_{\phi_{x_{2}}}\rho(x_{1},x_{2}) \bigr) \bigr).
\end{eqnarray*}

Recall that the covariance function $\rho$ does depend on $j$, but
since we
are assuming the kernel $K(x,y)$ to have finite expansion, thus the
corresponding Legendre polynomial expansion of $\rho(x_{1},x_{2})$ can be
assumed to have a $P_{\mathbb{\ell}}(\langle x_{1},x_{2}\rangle)$ (uniform
over $j$) which is the leading polynomial. Then, taking the modulus of the
above expression, and considering all $x_{1},x_{2}\in (
B(x_{0},\varepsilon) ) ^{c}$ that is, outside a ball of size
$\varepsilon$
around the point $x_{0}$, we shall have
\begin{eqnarray*}
&&\biggl\llvert \frac{C_{\mathbb{\ell};\phi\phi}^{2}\partial_{\phi
_{x_{0}}}P_{\mathbb{\ell}}(\langle x_{1},x_{0}\rangle)}{[1-P_{\mathbb
{\ell
}}(\langle x_{0},x_{1}\rangle)]^{2}[1-P_{\mathbb{\ell}}(\langle
x_{0},x_{2}\rangle)]}\biggr\rrvert
\\
&&\quad{}\times \bigl| \bigl( \bigl\{ \partial_{\phi_{x_{0}}}P_{\mathbb
{\ell}%
}\bigl(\langle
x_{1},x_{0}\rangle\bigr)-\partial_{\phi_{x_{0}}}P_{\mathbb{\ell
}%
}
\bigl(\langle x_{2},x_{0}\rangle\bigr) \bigr\}
C_{\mathbb{\ell};\phi\phi
}\bigl[1-P_{%
\mathbb{\ell}}\bigl(\langle x_{0},x_{2}
\rangle\bigr)\bigr]
\\
&&\hspace*{21pt}\quad{}+\partial_{\phi_{x_{0}}}P_{\mathbb{\ell}}\bigl(\langle x_{2},x_{0}
\rangle\bigr) \bigl\{ C_{\mathbb{\ell};\phi\phi}\bigl[1-P_{\mathbb
{\ell}%
}\bigl(\langle
x_{0},x_{2}\rangle\bigr)\bigr]\\
&&\hspace*{107pt}\quad{}-\bigl[1-P_{\mathbb{\ell}}
\bigl(\langle x_{0},x_{1}\rangle\bigr)\bigr]
\partial_{\phi_{x_{1}}}\partial_{\phi
_{x_{2}}}P_{%
\mathbb{\ell}}\bigl(\langle
x_{1},x_{2}\rangle\bigr) \bigr\} \bigr) \bigr|
\\
&&\qquad\leq\biggl\llvert \frac{C_{\mathbb{\ell};\phi\phi}^{2}P_{\mathbb
{\ell}%
}^{\prime}(\langle x_{1},x_{0}\rangle)}{(1-P_{\mathbb{\ell}}(\langle
x_{0},x_{1}\rangle))^{2}(1-P_{\mathbb{\ell}}(\langle
x_{0},x_{2}\rangle))}%
\biggr\rrvert
\\
&&\quad\qquad{}\times \bigl( \bigl| \bigl( P_{\mathbb{\ell}}^{\prime}
\bigl(\langle
x_{1},x_{0}\rangle\bigr) \bigl(-\sin\theta_{x_{1}}
\sin(\phi_{x_{1}}-\phi _{x_{0}})\bigr)\\
&&\hspace*{20pt}\qquad\quad{}-P_{\mathbb{\ell}}^{\prime}
\bigl(\langle x_{2},x_{0}\rangle \bigr) \bigl(-\sin
\theta_{x_{2}}\sin(\phi_{x_{2}}-\phi_{x_{0}})\bigr) \bigr)
\bigr|\cdot \varepsilon C_{\mathbb{\ell};\phi\phi}
\\
&&\hspace*{16pt}\qquad\quad{}+ \bigl|P_{\mathbb{\ell}}^{\prime}\bigl(\langle x_{2},x_{0}
\rangle \bigr) \bigl(-\sin\theta_{x_{2}}\sin(\phi_{x_{2}}-
\phi_{x_{0}})\bigr)\bigr |\\
&&\hspace*{61pt}{}\times  \bigl|%
 \bigl(C_{\mathbb{\ell};\phi\phi}
\bigl[1-P_{\mathbb{\ell}}\bigl(\langle x_{0},x_{2}\rangle
\bigr)\bigr]
-\bigl[1-P_{\mathbb{\ell}}\bigl(\langle x_{0},x_{1}
\rangle\bigr)\bigr]\\
&&\hspace*{80pt}{}\times\bigl\{P_{\mathbb{\ell
}%
}^{\prime\prime}\bigl(\langle
x_{1},x_{2}\rangle\bigr)\sin\theta_{x_{1}}\sin
\theta_{x_{2}}\sin^{2}(\phi_{x_{1}}-
\phi_{x_{2}})\\
&&\hspace*{114pt}\qquad\quad{}+P_{\mathbb{\ell}%
}^{\prime}\bigl(\langle
x_{1},x_{2}\rangle\bigr)\cos(\phi_{x_{1}}-\phi
_{x_{2}})\bigr\}%
 \bigr) \bigr| \bigr)
\\
&&\qquad\leq C_{\mathbb{\ell};\phi\phi}^{2}M(\varepsilon,\mathbb{\ell}) \\
&&\qquad\quad{}\times\bigl(
 \bigl\{\bigl|P_{\mathbb{\ell}}^{\prime}\bigl(\langle
x_{1},x_{0}\rangle\bigr)\bigr|\\
&&\hspace*{20pt}\quad\qquad{}\times \bigl|\bigl(\sin
\theta_{x_{2}}\sin(\phi_{x_{2}}-\phi_{x_{0}})-\sin\theta
_{x_{1}}\sin(\phi_{x_{1}}-\phi_{x_{0}})\bigr)\bigr|
\\
&&\hspace*{20pt}\qquad\quad{}+\bigl|\bigl(P_{\mathbb{\ell}}^{\prime}\bigl(\langle x_{2},x_{0}
\rangle \bigr)-P_{\mathbb{%
\ell}}^{\prime}\bigl(\langle x_{1},x_{0}
\rangle\bigr)\bigr)\bigr|\cdot\bigl|\sin\theta _{x_{2}}\sin(\phi_{x_{2}}-
\phi_{x_{0}})\bigr| \bigr\}\times\varepsilon C_{%
\mathbb{\ell};\phi\phi}
\\
&&\hspace*{20pt}\qquad\quad{}+M_{1}(\varepsilon,\mathbb{\ell})C_{\mathbb{\ell};\phi\phi
}\bigl|P_{\mathbb{%
\ell}}
\bigl(\langle x_{0},x_{1}\rangle\bigr)-P_{\mathbb{\ell}}
\bigl(\langle x_{0},x_{2}\rangle\bigr)\bigr|\\
&&\hspace*{20pt}\qquad\quad{}+M_{1}(
\varepsilon,\mathbb{\ell})\bigl|1-P_{\mathbb{\ell
}%
}\bigl(\langle x_{0},x_{1}
\rangle\bigr)\bigr|
\\
&&\hspace*{20pt}\qquad\quad{}\times\bigl|C_{\mathbb{\ell},\phi\phi}-P_{\mathbb{\ell}}^{\prime
\prime
}\bigl(\langle
x_{1},x_{2}\rangle\bigr)\sin\theta_{x_{1}}\sin\theta
_{x_{2}}\sin ^{2}(\phi_{x_{1}}-\phi_{x_{2}})\\
&&\hspace*{183pt}\qquad\quad{}-P_{\mathbb{\ell}}^{\prime}
\bigl(\langle x_{1},x_{2}\rangle\bigr)\cos(
\phi_{x_{1}}-\phi_{x_{2}})\bigr| \bigr)
\\
&&\qquad\leq C_{\mathbb{\ell},\phi\phi}^{2}M(\varepsilon,\mathbb{\ell})\\
&&\qquad\quad{}\times \bigl(
\varepsilon C_{\mathbb{\ell},\phi\phi}M_{2}(\mathbb{\ell},\varepsilon )
\\
&&\hspace*{14pt}\qquad\quad{}\times\bigl(%
|\sin\theta_{x_{2}}|\cdot\bigl|\sin(\phi_{x_{2}}-
\phi_{x_{0}})-\sin (\phi _{x_{1}}-\phi_{x_{0}})\bigr|
\\
&&\hspace*{30pt}\qquad\quad{}+\bigl|\sin(\phi_{x_{1}}-\phi_{x_{0}})\bigr|\cdot|\sin
\theta_{x_{2}}-\sin \theta_{x_{1}}|+M_{3}(\mathbb{
\ell},\varepsilon)|x_{2}-x_{1}| \bigr)
\\
&&\hspace*{16pt}\qquad\quad{}+M_{1}^{\prime}(\varepsilon,\mathbb{\ell})|x_{2}-x_{1}|+M_{1}^{\prime
\prime}(
\varepsilon,\mathbb{\ell})\cdot|\sin\theta_{x_{1}}\sin\theta
_{x_{2}}|\cdot\sin^{2}(\phi_{x_{1}}-
\phi_{x_{2}})
\\
&&\hspace*{76pt}\qquad\quad{}+M_{1}^{\prime\prime}(\varepsilon,\mathbb{\ell})\times\bigl|C_{\mathbb
{\ell}%
,\phi\phi}-P_{\mathbb{\ell}}^{\prime}
\bigl(\langle x_{1},x_{2}\rangle \bigr)\cos (
\phi_{x_{1}}-\phi_{x_{2}})\bigr| \bigr)
\\
&& \qquad\leq C_{l\phi\phi}^{2}M(\varepsilon,\mathbb{\ell})\\
&&\qquad\quad{}\times \bigl(\varepsilon
C_{%
\mathbb{\ell},\phi\phi}M_{2}(\varepsilon,\mathbb{\ell})M_{4}(\varepsilon
,%
\mathbb{\ell})\cdot\bigl|\sin(\phi_{x_{2}}-\phi_{x_{1}})-
\sin(\phi _{x_{1}}-\phi_{x_{1}})\bigr|
\\
&&\hspace*{16pt}\qquad\quad{}+M_{4}(\varepsilon,\mathbb{\ell})\cdot|\sin\theta_{x_{2}}-\sin
\theta _{x_{1}}|+M_{3}(\varepsilon,\mathbb{\ell})|x_{2}-x_{1}|\\
&&\qquad\quad\hspace*{16pt}{}+M_{1}^{\prime
}(
\varepsilon,\mathbb{\ell})|x_{2}-x_{1}|
\\
&&\hspace*{16pt}\qquad\quad{}+M_{1}^{\prime\prime\prime}(\varepsilon,\mathbb{\ell})\sin ^{2}(
\theta _{x_{2}}-\theta_{x_{1}})+M_{1}^{\prime\prime}(
\varepsilon,\mathbb{\ell }%
)\cdot\bigl|C_{\mathbb{\ell},\phi\phi}-P_{\mathbb{\ell}}^{\prime
}
\bigl(\langle x_{1},x_{2}\rangle\bigr)\bigr|
\\
&&\hspace*{144pt}\qquad\quad{}+M_{1}^{(iv)}(\varepsilon,\mathbb{\ell})\bigl|1-\cos(
\phi_{x_{2}}-\phi _{x_{1}})\bigr| \bigr).
\end{eqnarray*}

Now note that $C_{\mathbb{\ell},\phi\phi}$ is precisely equal to $P_{
\mathbb{\ell}}^{\prime}(1)$, which can be rewritten as $P_{\mathbb
{\ell}%
}^{\prime}(\langle x_{1},x_{1}\rangle)$. Replacing this in the last part
of the above expression, we get the following:
\begin{eqnarray*}
&&\biggl\llvert \frac{C_{\mathbb{\ell};\phi\phi}^{2}\partial_{\phi
_{x_{0}}}P_{\mathbb{\ell}}(\langle x_{1},x_{0}\rangle)}{[1-P_{\mathbb
{\ell
}}(\langle x_{0},x_{1}\rangle)]^{2}[1-P_{\mathbb{\ell}}(\langle
x_{0},x_{2}\rangle)]}\biggr\rrvert
\\
&&\quad{}\times \bigl| \bigl( \bigl\{ \partial_{\phi_{x_{0}}}P_{\mathbb
{\ell}%
}\bigl(\langle
x_{1},x_{0}\rangle\bigr)-\partial_{\phi_{x_{0}}}P_{\mathbb{\ell
}%
}
\bigl(\langle x_{2},x_{0}\rangle\bigr) \bigr\}
C_{\mathbb{\ell};\phi\phi
}\bigl[1-P_{%
\mathbb{\ell}}\bigl(\langle x_{0},x_{2}
\rangle\bigr)\bigr]
\\
&&\hspace*{19pt}\quad{}+\partial_{\phi_{x_{0}}}P_{\mathbb{\ell}}\bigl(\langle x_{2},x_{0}
\rangle\bigr) \bigl\{ C_{\mathbb{\ell};\phi\phi}\bigl[1-P_{\mathbb
{\ell}%
}\bigl(\langle
x_{0},x_{2}\rangle\bigr)\bigr]\\
&&\hspace*{117pt}{}-\bigl[1-P_{\mathbb{\ell}}
\bigl(\langle x_{0},x_{1}\rangle\bigr)\bigr]
\partial_{\phi_{x_{1}}}\partial_{\phi
_{x_{2}}}P_{%
\mathbb{\ell}}\bigl(\langle
x_{1},x_{2}\rangle\bigr) \bigr\} \bigr) \bigr|
\\
&&\qquad\leq C_{l\phi\phi}^{2}M(\varepsilon,\mathbb{\ell})\\
&&\qquad\quad{}\times \bigl(\varepsilon
C_{%
\mathbb{\ell},\phi\phi}M_{2}(\varepsilon,\mathbb{\ell})M_{4}(\varepsilon
,%
\mathbb{\ell})\cdot\bigl|\sin(\phi_{x_{2}}-\phi_{x_{1}})-
\sin(\phi _{x_{1}}-\phi_{x_{1}})\bigr|
\\
&&\hspace*{14pt}\qquad\quad{}+M_{4}(\varepsilon,\mathbb{\ell})\cdot|\sin\theta_{x_{2}}-\sin
\theta _{x_{1}}|+M_{3}(\varepsilon,\mathbb{\ell})|x_{2}-x_{1}|\\
&&\hspace*{14pt}\qquad\quad{}+M_{1}^{\prime
}(
\varepsilon,\mathbb{\ell})|x_{2}-x_{1}|
\\
&&\hspace*{14pt}\qquad\quad{}+M_{1}^{\prime\prime\prime}(\varepsilon,\mathbb{\ell})\sin ^{2}(
\theta _{x_{2}}-\theta_{x_{1}})+M_{1}^{\prime\prime}(
\varepsilon,\mathbb{\ell }%
)\cdot\bigl|P_{\mathbb{\ell}}^{\prime}\bigl(
\langle x_{1},x_{1}\rangle\bigr) -P_{\mathbb{%
\ell}}^{\prime}
\bigl(\langle x_{1},x_{2}\rangle\bigr)\bigr|
\\
&&\hspace*{166pt}\qquad\quad{}+M_{1}^{(iv)}(\varepsilon,\mathbb{\ell})\bigl|1-\cos(
\phi_{x_{2}}-\phi _{x_{1}})\bigr| \bigr)
\\
&&\qquad\leq c(\varepsilon,L_{K})|x_{1}-x_{2}|.
\end{eqnarray*}

By replicating these set of calculations for each pair of terms in
$\mathbb{E%
}(\widehat{f}_{j;q}^{x_{0}}(x_{2})-\widehat{f}_{j;q}^{x_{0}}(x_{1}))^{2}$,
we conclude that for every $x_{1},x_{2}\in B(x_{0},\varepsilon)$,
\[
\mathbb{E}\bigl(\widehat{f}_{j;q}^{x_{0}}(x_{2})-
\widehat{f}%
_{j;q}^{x_{0}}(x_{1})
\bigr)^{2}\leq c(\varepsilon,L_{K})|x_{2}-x_{1}|.
\]
Next, we wish to extend this to points inside the set $B(x_{0},\varepsilon
)\setminus\{x_{0}\}$, but the Lipschitz coefficient $c(\varepsilon,L_{K})$
needs to be controlled. Observing that $c(\varepsilon,L_{K})$ depends on $
\varepsilon$ through the distance of points $x_{1},x_{2}$ to $x_{0}$, note
that $\operatorname{cov}(\widehat{f}_{j;q}^{x_{0}}(x_{1}),\widehat{f}%
_{j;q}^{x_{0}}(x_{2}))$ grows rapidly as either of $x_{1}$ or $x_{2}$
approach $x_{0}$, whereas when $x_{1}$ and $x_{2}$ simultaneously
approach $%
x_{0}$, then the expression assumes the form of an indeterminate form, for
which one can use the standard l'H\^{o}pital's rule to get a precise
form of
the expression. Thus, let us first examine the following:
\begin{eqnarray*}
&&\lim_{x\rightarrow x_{0}}\operatorname{var}\bigl(\widehat{f}_{j;q}^{x_{0}}(x)
\bigr)
\\
&&\qquad=\lim_{x\rightarrow x_{0}}\frac{1}{(1-\rho(x_{0},x))^{2}}\\
&&\qquad\quad{}\times \bigl(1-\rho
^{2}(x_{0},x)+2\rho(x_{0},x)
\partial_{\theta_{x_{0}}}\rho (x_{0},x)\partial_{\theta_{x}}
\rho(x_{0},x)\partial_{\theta
_{x}}^{2}\rho(x,x)
\\
&&\hspace*{16pt}\qquad\quad{}+2\rho(x_{0},x)\partial_{\phi_{x_{0}}}\rho(x_{0},x)
\partial_{\phi
_{x}}\rho(x_{0},x)\partial_{\phi_{x}}^{2}
\rho(x,x)\\
&&\qquad\quad\hspace*{16pt}{}+\bigl\{\partial _{\theta
_{x_{0}}}\rho(x_{0},x)\bigr
\}^{2}\bigl\{\partial_{\theta_{x}}^{2}\rho(x,x)\bigr\}
^{3}
\\
&&\hspace*{123pt}\hspace*{16pt}\qquad\quad{}+\bigl\{\partial_{\phi_{x_{0}}}\rho(x_{0},x)\bigr\}^{2}
\bigl\{\partial_{\phi
_{x}}^{2}\rho(x,x)\bigr\}^{3}
\bigr).
\end{eqnarray*}

Let us do the limit computations for just the first term of the variance
expression:
\begin{eqnarray*}
&&\lim_{x\rightarrow x_{0}}\frac{1-\rho^{2}(x_{0},x)}{(1-\rho
(x_{0},x))^{2}%
}
\\
&&\qquad=\lim_{x\rightarrow x_{0}}\frac{-2\rho(x_{0},x)\partial_{\theta
_{x}}\rho(x_{0},x)}{(-2)(1-\rho(x_{0},x))\partial_{\theta_{x}}\rho
(x_{0},x)}
\\
&&\qquad=\lim_{x\rightarrow x_{0}}\frac{\rho(x_{0},x)\partial_{\theta
_{x}}^{2}\rho(x_{0},x)+ ( \partial_{\theta_{x}}\rho
(x_{0},x) )
^{2}}{ ( 1-\rho(x_{0},x) ) \partial_{\theta_{x}}^{2}\rho
(x_{0},x)- ( \partial_{\theta_{x}}\rho(x_{0},x) ) ^{2}}
\\
&&\qquad=\lim_{x\rightarrow x_{0}}\frac{\rho(x_{0},x)\partial_{\theta
_{x}}^{3}\rho(x_{0},x)+3\partial_{\theta_{x}}\rho(x_{0},x)\partial
_{\theta_{x}}^{2}\rho(x_{0},x)}{ ( 1-\rho(x_{0},x) )
\partial
_{\theta_{x}}^{3}\rho(x_{0},x)-3\partial_{\theta_{x}}\rho
(x_{0},x)\partial_{\theta_{x}}^{2}\rho(x_{0},x)}
\\
&&\qquad=\lim_{x\rightarrow x_{0}}\bigl(\rho(x_{0},x)\partial_{\theta
_{x}}^{4}\rho(x_{0},x)+\partial_{\theta_{x}}\rho(x_{0},x)\partial
_{\theta_{x}}^{3}\rho(x_{0},x)\\
&&\hspace*{55pt}{}+3\partial_{\theta_{x}}^{2}\rho
(x_{0},x)\partial_{\theta_{x}}^{2}\rho(x_{0},x)+3\partial_{\theta
_{x}}\rho(x_{0},x)\partial_{\theta_{x}}^{3}\rho(x_{0},x)\bigr)\\
&&\hspace*{23pt}\qquad\quad/\bigl( \bigl(
1-\rho
(x_{0},x) \bigr) \partial_{\theta_{x}}^{4}\rho(x_{0},x)-4\partial
_{\theta_{x}}\rho(x_{0},x)\partial_{\theta_{x}}^{3}\rho
(x_{0},x)\\
&&\hspace*{217pt}{}-3 \bigl( \partial_{\theta_{x}}^{2}\rho(x_{0},x) \bigr) ^{2}\bigr),
\end{eqnarray*}
where we have applied l'H\^{o}pital's rule at each step (four times),
and we
note that the final expression is indeed a nontrivial, determinate limit.

We note that we have assumed $\rho(x_{0},x)=P_{\mathbb{\ell}}(\langle
x_{0},x\rangle)$, and hence the derivatives above have the following form:
\begin{eqnarray*}
&&\partial_{\theta_{x}}P_{\mathbb{\ell}}\bigl(\langle x_{0},x\rangle
\bigr)=P_{%
\mathbb{\ell}}^{\prime}(\cdot) \bigl( \cos\theta_{x}
\sin\theta _{x_{0}}\cos(\phi_{x}-\phi_{x_{0}})-\sin
\theta_{x}\cos\theta _{x_{0}} \bigr),
\\
&&\partial_{\theta_{x}}^{2}P_{\mathbb{\ell}}\bigl(\langle
x_{0},x\rangle\bigr) =P_{%
\mathbb{\ell}}^{\prime\prime}(\cdot)
\bigl( \cos\theta_{x}\sin \theta _{x_{0}}\cos(
\phi_{x}-\phi_{x_{0}})-\sin\theta_{x}\cos\theta
_{x_{0}} \bigr) ^{2}
\\
&&\hspace*{32pt}\qquad\qquad{}+P^{\prime}(\cdot) \bigl( -\sin\theta_{x}\sin
\theta_{x_{0}}\cos (\phi _{x}-\phi_{x_{0}})-\cos
\theta_{x}\cos\theta_{x_{0}} \bigr),
\\
&&\partial_{\theta_{x}}^{3}P_{\mathbb{\ell}}\bigl(\langle
x_{0},x\rangle \bigr)
\\
&&\qquad=P_{\mathbb{\ell}}^{\prime\prime\prime}(\cdot) \bigl( \cos\theta _{x}\sin
\theta_{x_{0}}\cos(\phi_{x}-\phi_{x_{0}})-\sin\theta
_{x}\cos \theta_{x_{0}} \bigr) ^{3}
\\
&&\quad\qquad{}+2P_{\mathbb{\ell}}^{\prime\prime}(\cdot) \bigl( \cos\theta _{x}
\sin \theta_{x_{0}}\cos(\phi_{x}-\phi_{x_{0}})-\sin
\theta_{x}\cos \theta _{x_{0}} \bigr)\\
&&\qquad\qquad{}\times \bigl( -\sin
\theta_{x}\sin\theta_{x_{0}}\cos(\phi _{x}-
\phi_{x_{0}})-\cos\theta_{x}\cos\theta_{x_{0}} \bigr)
\\
&&\quad\qquad{}+P^{\prime}(\cdot) \bigl( -\cos\theta_{x}\sin
\theta_{x_{0}}\cos (\phi _{x}-\phi_{x_{0}})+\sin
\theta_{x}\cos\theta_{x_{0}} \bigr),
\\
&&\partial_{\theta_{x}}^{4}P_{\mathbb{\ell}}\bigl(\langle
x_{0},x\rangle \bigr)=P_{%
\mathbb{\ell}}^{(iv)}(\cdot) \bigl(
\cos\theta_{x}\sin\theta _{x_{0}}\cos(\phi_{x}-
\phi_{x_{0}})-\sin\theta_{x}\cos\theta _{x_{0}} \bigr)
^{4}
\\
&&\hspace*{52pt}\qquad{}+3P_{\mathbb{\ell}}^{\prime\prime\prime}(\cdot) \bigl( \cos\theta _{x}
\sin\theta_{x_{0}}\cos(\phi_{x}-\phi_{x_{0}})-\sin
\theta _{x}\cos \theta_{x_{0}} \bigr) \\
&&\hspace*{52pt}\quad\qquad{}\times\bigl( -\sin
\theta_{x}\sin\theta_{x_{0}}\cos (\phi_{x}-
\phi_{x_{0}})-\cos\theta_{x}\cos\theta_{x_{0}} \bigr)
\\
&&\hspace*{52pt}\qquad{}+2P_{\mathbb{\ell}}^{\prime\prime\prime}(\cdot) \bigl( \cos\theta _{x}
\sin\theta_{x_{0}}\cos(\phi_{x}-\phi_{x_{0}})-\sin
\theta _{x}\cos \theta_{x_{0}} \bigr) ^{2} \\
&&\hspace*{52pt}\quad\qquad{}\times\bigl( -
\sin\theta_{x}\sin\theta _{x_{0}}\cos (\phi_{x}-
\phi_{x_{0}})-\cos\theta_{x}\cos\theta_{x_{0}} \bigr)
\\
&&\hspace*{52pt}\qquad{}+2P_{\mathbb{\ell}}^{\prime\prime}(\cdot) \bigl( -\sin\theta _{x}
\sin \theta_{x_{0}}\cos(\phi_{x}-\phi_{x_{0}})-\cos
\theta_{x}\cos \theta _{x_{0}} \bigr) ^{2}
\\
&&\hspace*{52pt}\qquad{}-2P_{\mathbb{\ell}}^{\prime\prime}(\cdot) \bigl( \cos\theta _{x}
\sin \theta_{x_{0}}\cos(\phi_{x}-\phi_{x_{0}})-\sin
\theta_{x}\cos \theta _{x_{0}} \bigr) ^{2}
\\
&&\hspace*{52pt}\qquad{}+P^{\prime\prime}(\cdot) \bigl( \cos\theta_{x}\sin\theta
_{x_{0}}\cos (\phi_{x}-\phi_{x_{0}})-\sin
\theta_{x}\cos\theta_{x_{0}} \bigr)\\
&&\hspace*{52pt}\quad\qquad{}\times \bigl( -\cos
\theta_{x}\sin\theta_{x_{0}}\cos(\phi_{x}-
\phi_{x_{0}})+\sin \theta_{x}\cos\theta_{x_{0}} \bigr)
\\
&&\hspace*{52pt}\qquad{}+P^{\prime}(\cdot) \bigl( \sin\theta_{x}\sin
\theta_{x_{0}}\cos (\phi _{x}-\phi_{x_{0}})+\cos
\theta_{x}\cos\theta_{x_{0}} \bigr).
\end{eqnarray*}

Thus, we conclude that
\begin{eqnarray*}
P_{\mathbb{\ell}}\bigl(\langle x_{0},x\rangle\bigr) |_{x=x_{0}}
&=&1,
\\
\partial_{\theta_{x}}P_{\mathbb{\ell}}\bigl(\langle x_{0},x\rangle
\bigr) |%
_{x=x_{0}} &=&0,
\\
\partial_{\theta_{x}}^{2}P_{\mathbb{\ell}}\bigl(\langle
x_{0},x\rangle \bigr) |%
_{x=x_{0}}
&=&-P^{\prime}(1),
\\
\partial_{\theta_{x}}^{3}P_{\mathbb{\ell}}\bigl(\langle
x_{0},x\rangle \bigr) |%
_{x=x_{0}} &=&0,
\\
\partial_{\theta_{x}}^{4}P_{\mathbb{\ell}}\bigl(\langle
x_{0},x\rangle \bigr) |%
_{x=x_{0}},
&=&2P_{\mathbb{\ell}}^{\prime\prime}(1)+P_{\mathbb{\ell}%
}^{\prime}(1).
\end{eqnarray*}

Subsequently, we shall argue that by continuity, and the fact the field
$%
\widehat{f}^{x_0}_{j;q}$ appears to be singular at $x_0$, we conclude that
for $x_1,x_2\in B(x_0,\varepsilon)$ and a small enough $\varepsilon$,
\begin{eqnarray*}
&&\sup_{x_1,x_2\in B(x_0,\varepsilon)}\mathbb{E} \bigl( \widehat{f}%
_{j;q}^{x_0}(x_2)
- \widehat{f}_{j;q}^{x_0}(x_1)
\bigr)^2\\
&&\qquad = \lim_{(x_1,x_2)\to(x_0,x_0)}\mathbb{E} \bigl( \widehat
{f}_{j;q}^{x_0}(x_2) - \widehat{f}_{j;q}^{x_0}(x_1)
\bigr)^2.
\end{eqnarray*}

The limit on the right-hand side can again be evaluated by applying l'H\^{o}pital's
rule, and thus, the (uniform) Lipschitz behaviour is justified. Thereafter,
we note that by the isotropy of the underlying field $f_{j;q}$, the
$\mathbb{%
E} (\sup_{x\in S^{2}\setminus\{x_{0}\}}\widehat
{f}_{j;q}^{x_{0}}(x)%
 )$ does not depend on $x_{0}$, and thus we get a uniform (over
$j$ and
$x_{0}$) Lipschitz bound, as claimed.
\end{appendix}

\section*{Acknowledgements} We are grateful to the two referees for
their constructive comments, which helped us improve the readability of the\break 
paper.


%




\printaddresses
\end{document}